\theoremstyle{plain}
\newtheorem{thm}{Theorem}
\newtheorem{cor}[thm]{Corollary}
\newtheorem{lem}[thm]{Lemma}
\newtheorem{prop}[thm]{Proposition}
\newcommand{\ep}{\varepsilon}
\newcommand{\al}{\alpha}
\newcommand{\vp}{\varphi}
\newcommand{\bs}{\setminus}
\newcommand{\ol}{\overline}
\newcommand{\inte}{\operatorname{int}}
\newcommand{\N}{{\mathbb N}}
\newcommand{\R}{{\mathbb R}}
\newcommand{\cO}{{\mathcal O}}
\newcommand{\cS}{{\mathcal S}}
\newcommand{\cA}{{\mathcal A}}
\newcommand{\U}{{\mathcal U}}
\begin{document}

%%%%%%%%%%%%%%%%%%%%%%%%%%%%%%%%%%%%%%%%%%%%%%%%%%%%%

\title[Compact disjointness preserving operators]{Compact and weakly compact disjointness preserving operators on spaces of differentiable functions}

\begin{abstract}
A pair of functions defined on a set $X$ with values in a vector space $E$ is said to be disjoint if at least one of the functions takes the value $0$ at every point in $X$. An operator acting between vector-valued function spaces is disjointness preserving if it maps disjoint functions to disjoint functions. We characterize compact and weakly compact disjointness preserving operators between spaces of Banach space-valued differentiable functions.
\end{abstract}

\author{Denny H. Leung}
\address{Department of Mathematics, National University of Singapore, Singapore 119076}
\email{matlhh@nus.edu.sg}
\author{Ya-shu Wang}
\address{Department of Mathematics,
National Central University,
Chungli 32054,
Taiwan, R.O.C.}
\email{wangys@mx.math.ncu.edu.tw}
\thanks{{\em 2010 Mathematics Subject Classification.} Primary 46E40, 46E50, 47B33, 47B38.}
\thanks{{\em Key words and phrases.} Disjointness preserving operators, spaces of vector-valued differentiable functions, compact and weakly compact operators.}
\thanks{Research of the first author was partially supported by AcRF project no.\ R-146-000-130-112}
\maketitle

Let $X, Y$ be topological spaces and let $E, F$ be Banach spaces. Suppose that $A(X,E)$ and $A(Y,F)$ are subspaces of the spaces of continuous functions $C(X,E)$ and $C(Y,F)$ respectively. An operator $T: A(X,E) \to A(Y,F)$ is said to be {\em disjointness preserving} or {\em separating} if $T$ maps disjoint functions to disjoint functions. Here disjointness of two functions $f$ and $g$ in, say, $A(X,E)$ simply means that either $f(x) = 0$ or $g(x) = 0$ for all $x \in X$. In case $E = F = \R$, any algebraic homomorphism $T: C(X) \to C(Y)$ is disjointness preserving. The study of such homomorphisms is classical; see, for example \cite{GJ}.
Disjointness preserving operators on spaces of continuous functions, whether scalar- or vector-valued, have also been investigated extensively; a by-no-means exhaustive list of references includes \cite{A, ABN, BNT, HBN}. We also note that a complete characterization of disjointness preserving operators $T: C(X) \to C(Y)$ is obtained in \cite{J}. A similar result for operators $T: C_0(X) \to C_0(Y)$, $X, Y$ locally compact, was obtained in \cite{LinW}, as well as characterizations of compactness and weak compactness of $T$. These results were extended to the vector-valued case in \cite{JL}. In a parallel direction, many authors have studied algebraic homomorphisms between spaces of scalar-valued differentiable functions \cite{AGL, GGJ, GL}. In the vector-valued case, Araujo \cite{A2} gave a characterization of biseparating maps $T: C^p(X,E) \to C^q(Y,F)$, $p, q < \infty$. (A map $T$ is biseparating if it is a bijection and both $T$ and $T^{-1}$ are disjointness preserving.)

In this paper, we undertake a thorough study of compact and weakly compact disjointness preserving operators $T: C^p(X,E) \to C^q(Y,F)$.
Recall that a linear map $T$ between topological vector spaces is said to be {\em compact}, respectively, {\em weakly compact} if $T$ maps some open set onto a relatively compact, respectively, relatively weakly compact set.
Our results can be compared in particular with those of \cite{GGJ, GL}, which concern homomorphisms on spaces of scalar-valued differentiable functions. (Note, however, that in \cite{GGJ}, a (weakly) compact operator is defined to be one that maps {\em bounded} sets onto relatively (weakly) compact sets.)
A major difference between the present situation and the case of homomorphisms treated in \cite{GGJ, GL} is that the operator $T$ is represented as a series rather than a single term (see Theorem \ref{thm10}).  Because of the interference between terms, extraction of information on the various components of the representation (i.e., the terms $\Phi_k$ and $h$ in Theorem \ref{thm10}) becomes a much more delicate business.

We now briefly summarize the contents of the various sections of the paper.
In \S 1, a variation of the Stone-\v{C}{e}ch compactification procedure is introduced to obtain the support map of a disjointness preserving operator. In \S 2, based on a concentration argument (Proposition \ref{prop9}), a general representation theorem for continuous disjointness preserving operators between spaces of vector-valued differentiable functions is obtained (Theorem \ref{thm10}). The next three sections are devoted to a close study of the support map in various situations.  This study, especially
in the cases $p = q = \infty$ and $p = q < \infty$, requires careful analysis of the map $T$ utilizing the representation in Theorem \ref{thm10}. The result of the analysis is that the support map is locally constant unless $p > q$. In \S 6, the compactness or weak compactness of $T$ is taken into account. First, in Proposition \ref{prop25}, we embed $C^q(Y,F)$ linearly homeomorphically into a product of Banach spaces of (vector-valued) continuous functions. Then, using well-known characterizations of compactness and weak compactness in spaces of continuous functions, we reach the goals of characterizing compact and weakly compact disjointness preserving operators $T: C^p(X,E) \to C^q(Y,F)$ in all cases except where $p > q$. In the final section, it is shown that when $p > q$, not only is the support map $h$ not necessarily locally constant, it does not even have to be differentiable at all points. Nevertheless, we show that $h$ is $C^q$ on a dense open set and deduce characterizations of compact and weakly compact disjointness preserving operators.

The first author thanks Wee-Kee Tang for many discussions concerning the results of the paper.

\section{Compactification}

Let $X$ be a Hausdorff topological space and let $E$ be a Banach
space. Suppose that $A(X)$ is a vector subspace of $C(X)$, the space
of all continuous real-valued functions on $X$.  Assume that $A(X)$
separates points from closed sets: If $x \in X$ and $P$ is a closed
subset of $X$ not containing $x$, then there exists $\vp \in A(X)$
so that $\vp(x) = 1$ and $\vp(P) \subseteq \{0\}$.  Let $A(X,E)$ be
a vector subspace of $C(X,E)$, the space of $E$-valued continuous
functions on $X$, that is $A(X)$-multiplicative: $\vp f \in A(X,E)$
if $\vp \in A(X)$ and $f \in A(X,E)$.  The use of $A(X)$-compactification (defined below) in the study of separating maps is inspired by a similar construction in \cite{A}.

\begin{prop}\label{p1}
Define $i: X \to \R^{A(X)}$ by $i(x)(\vp) = \vp(x)$.  Then $i$ is a homeomorphic embedding.
\end{prop}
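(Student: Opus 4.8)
The plan is to verify the three conditions that together make $i$ a homeomorphic embedding: injectivity, continuity, and the property that $i$ is a homeomorphism onto its image $i(X)$ (with the subspace topology inherited from the product space $\R^{A(X)}$). Recall that $\R^{A(X)}$ carries the product topology, for which each coordinate projection $\pi_\vp: \R^{A(X)} \to \R$, $\pi_\vp(f) = f(\vp)$, is continuous and the family $(\pi_\vp)_{\vp \in A(X)}$ is initial. The first two conditions are routine. For injectivity, if $x \neq y$ then, since $X$ is Hausdorff (hence $T_1$), the set $\{y\}$ is closed and does not contain $x$; the hypothesis that $A(X)$ separates points from closed sets then yields $\vp \in A(X)$ with $\vp(x) = 1$ and $\vp(y) = 0$, so that $i(x)(\vp) \neq i(y)(\vp)$ and thus $i(x) \neq i(y)$. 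For continuity, it suffices by the initiality of the projections to observe that $\pi_\vp \circ i = \vp$ is continuous for each $\vp$, since $(\pi_\vp \circ i)(x) = \vp(x)$ and $A(X) \subseteq C(X)$.

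The substantive step, where the separating hypothesis is genuinely needed, is to show that $i$ is relatively open onto $i(X)$, equivalently that $i^{-1}: i(X) \to X$ is continuous. I would argue directly: given an open set $U \subseteq X$ and a point $x \in U$, it suffices to produce a basic open neighborhood $V$ of $i(x)$ in $\R^{A(X)}$ with $V \cap i(X) \subseteq i(U)$. Since $P := X \bs U$ is closed and $x \notin P$, the hypothesis furnishes $\vp \in A(X)$ with $\vp(x) = 1$ and $\vp(P) \subseteq \{0\}$. Set $V := \pi_\vp^{-1}((0,2))$, a basic open set containing $i(x)$ because $\vp(x) = 1$. If $i(y) \in V$ then $\vp(y) \neq 0$, which forces $y \notin P$, i.e.\ $y \in U$, so that $i(y) \in i(U)$. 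Hence every point of $i(U)$ is interior to it in $i(X)$, and $i(U)$ is relatively open.

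The only real obstacle is recognizing that the separating property must do double duty: once to distinguish distinct points for injectivity, and then, more essentially, to ``detect'' the open set $U$ via a single coordinate $\vp$ whose zero set engulfs the complement $P$. Once the correct $\vp$ is isolated, a one-coordinate basic open set does all the work, with no appeal to the product structure beyond a single projection. Combining injectivity, continuity, and relative openness shows that $i$ is a homeomorphic embedding.
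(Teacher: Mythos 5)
Your proof is correct and follows essentially the same route as the paper's: injectivity and continuity are handled coordinate-wise, and the continuity of $i^{-1}$ is obtained by choosing $\vp \in A(X)$ with $\vp(x)=1$ and $\vp(X\bs U)\subseteq\{0\}$ and using the single-coordinate open set $\{a: a(\vp)>0\}$ (your $(0,2)$ variant is immaterial). The only cosmetic difference is that you spell out the $T_1$ reduction for injectivity, which the paper leaves implicit.
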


\begin{proof}
If $x_1 \neq x_2$, there exists $\vp \in A(X)$ such that $\vp(x_1) \neq \vp(x_2)$.  It follows that $i$ is one-to-one.

Since each $\vp \in A(X)$ is continuous, $i$ is also continuous.

Suppose $x_0 \in X$ and $U$ is an open neighborhood of $x_0$ in $X$.
Set $P = X \bs U$. There exists $\vp \in A(X)$ such that $\vp(x_0) =
1$ and $\vp(P) \subseteq \{0\}$. Now $V = \{a \in \R^{A(X)}: a(\vp)
> 0\}$ is an open neighborhood of $i(x_0)$ in $\R^{A(X)}$.  If $x
\in X$ and $i(x) \in V$, then $\vp(x) = i(x)(\vp) > 0$ and hence $x
\notin P$.  Thus $x \in U$.  This establishes the continuity of
$i^{-1}$ on $i(X)$.
\end{proof}

Denote by $\R_\infty$ the $1$-point compactification of $\R$ and
regard $\R^{A(X)}$ naturally as a subspace of $\R_\infty^{A(X)}$. We
call the closure of $i(X)$ in $\R_\infty^{A(X)}$ the
$A(X)$-compactification of $X$ and denote it by $\cA X$.
Note that $\cA X$ is indeed compact Hausdorff and $X$ is homeomorphic with the dense subspace $i(X)$ in $\cA X$.\\

Assume from hereon that $A(X)$ has the following property ($*$):\\

\noindent For any $N \in \N$, any $\vp_1,\dots, \vp_N$ in $A(X)$ and
any $C^\infty$ function $\Phi: \R^N \to \R$ with
$\sup_{a\in \R^N}|\partial^\xi\Phi(a)| < \infty$ for all $\xi$, the function
\[ \Phi(\vp_1,\dots,\vp_N): x \mapsto \Phi(\vp_1(x),\dots,\vp_N(x)) \text{ belongs to }A(X).\]

We omit the simple proof of the next lemma.

\begin{lem}\label{lem2}
If $P'$ and $Q'$ are sets in $\R^N$ such that $d(P',Q') > 0$, where
$d$ is the usual Euclidean distance on $\R^N$, then there exists a
$C^\infty$ function $\Phi: \R^N \to \R$ with $\sup_{a\in
\R^N}|\partial^\xi\Phi(a)| < \infty$ for all $\xi$ such that
$\Phi(P') \subseteq \{0\}$ and $\Phi(Q') \subseteq \{1\}$.
\end{lem}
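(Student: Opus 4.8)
The plan is to build $\Phi$ by smoothing a Lipschitz ``bump'' through convolution with a mollifier. Write $\delta = d(P',Q') > 0$, and assume both sets are nonempty (otherwise a constant function does the job). First I would form the distance function $a \mapsto d(a,P')$ on $\R^N$, which is $1$-Lipschitz, and compose it with a continuous piecewise-linear cutoff $\psi\colon [0,\infty) \to [0,1]$ that equals $0$ on $[0,\delta/3]$ and $1$ on $[2\delta/3,\infty)$. The resulting function $u(a) = \psi(d(a,P'))$ is Lipschitz, takes values in $[0,1]$, vanishes on the $\delta/3$-neighborhood of $P'$, and equals $1$ wherever $d(a,P') \ge 2\delta/3$. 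In particular $u \equiv 0$ on $P'$, and since every point of $Q'$ lies at distance at least $\delta > 2\delta/3$ from $P'$, also $u \equiv 1$ on $Q'$.

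Next I would fix a standard mollifier: a function $\rho \in C^\infty(\R^N)$ with $\rho \ge 0$, $\int \rho = 1$, supported in the ball of radius $\delta/4$, and set $\Phi = u * \rho$. Because $u$ is bounded and continuous while $\rho$ is smooth with compact support, $\Phi$ is $C^\infty$ and one may differentiate under the integral sign, so that $\partial^\xi \Phi = u * (\partial^\xi \rho)$ for every multi-index $\xi$.

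It then remains to verify the three required properties. For the uniform bounds I would estimate $\|\partial^\xi \Phi\|_\infty \le \|u\|_\infty\,\|\partial^\xi \rho\|_{L^1} = \|\partial^\xi \rho\|_{L^1} < \infty$, valid for every $\xi$ since $\rho$ is smooth with compact support and $\|u\|_\infty \le 1$. For the boundary values I would use that the support of $\rho$ has radius less than $\delta/3$: if $a \in P'$ then $d(a-y,P') \le |y| < \delta/3$ for every $y$ in the support, so $u(a-y) = 0$ and $\Phi(a) = 0$; if $a \in Q'$ then $d(a-y,P') \ge \delta - |y| > 2\delta/3$, so $u(a-y) = 1$ and $\Phi(a) = \int \rho = 1$.

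There is no genuine obstacle here; the only point needing care is the bookkeeping of radii, namely choosing the support of the mollifier small (radius $\delta/4$) relative to the transition band $[\delta/3,2\delta/3]$, so that convolution leaves the values of $u$ on $P'$ and $Q'$ untouched. The mechanism that makes the scheme work is that compactly supported mollification of a bounded function automatically yields a $C^\infty$ function whose every derivative is bounded by an $L^1$-norm of a derivative of $\rho$, which is exactly the uniform control on $\partial^\xi \Phi$ demanded in the statement.
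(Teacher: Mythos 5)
Your proof is correct: the paper explicitly omits the proof of this lemma as ``simple,'' and your mollification of the Lipschitz cutoff $\psi(d(\cdot,P'))$, with the support radius $\delta/4$ of the mollifier chosen inside the transition band, is exactly the standard argument intended, and the estimate $\|\partial^\xi\Phi\|_\infty\le\|u\|_\infty\|\partial^\xi\rho\|_{L^1}$ gives the required uniform bounds on all derivatives. All the radius bookkeeping checks out ($\delta/4<\delta/3$ on the $P'$ side and $\delta-\delta/4=3\delta/4>2\delta/3$ on the $Q'$ side), so there is nothing to add.
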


\begin{prop}\label{prop2}
Assume that $A(X)$ separates points from closed sets and has
property ($*$).  If $P$ and $Q$ are subsets of $X$ with
$\overline{i(P)}^{\cA X} \cap \overline{i(Q)}^{\cA X} = \emptyset$,
then there exists $\vp \in A(X)$ so that $\vp(P) \subseteq \{0\}$
and $\vp(Q) \subseteq \{1\}$.
\end{prop}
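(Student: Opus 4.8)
The plan is to use the hypothesis on the disjoint closures to reduce the separation problem to finitely many coordinates, and then to invoke Lemma \ref{lem2}; the one genuine subtlety is that the relevant closures live in $\R_\infty^{A(X)}$ rather than in a Euclidean space, so the points at infinity must be handled before Lemma \ref{lem2} becomes applicable. First I would observe that $\cA X$ is closed in $\R_\infty^{A(X)}$, so the closures of $i(P)$ and $i(Q)$ taken in $\cA X$ coincide with their closures taken in $\R_\infty^{A(X)}$. Writing $K_1 = \ol{i(P)}$ and $K_2 = \ol{i(Q)}$ for these, the hypothesis says that $K_1$ and $K_2$ are disjoint compact subsets of the compact space $\R_\infty^{A(X)}$.

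Next I would carry out the standard reduction to finitely many coordinates. For each $(p,q) \in K_1 \times K_2$ there is a coordinate $\vp$ with $p(\vp) \neq q(\vp)$; choosing disjoint open sets $U, V \subseteq \R_\infty$ containing these two values yields a basic open cylinder about $(p,q)$, and finitely many such cylinders cover the compact set $K_1 \times K_2$. The coordinates appearing in this finite cover form a finite set $S = \{\vp_1, \dots, \vp_N\} \subseteq A(X)$ such that the projection $\pi: \R_\infty^{A(X)} \to \R_\infty^N$ onto these coordinates satisfies $\pi(K_1) \cap \pi(K_2) = \emptyset$. Since $\pi$ is continuous and $K_1, K_2$ are the closures of $i(P), i(Q)$, one checks that $\pi(K_1) = \ol{P'}$ and $\pi(K_2) = \ol{Q'}$ (closures in $\R_\infty^N$), where $P' = \{(\vp_1(x), \dots, \vp_N(x)) : x \in P\}$ and $Q'$ is defined analogously. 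Thus $\ol{P'}$ and $\ol{Q'}$ are disjoint compact subsets of $\R_\infty^N$, while $P', Q'$ themselves lie in $\R^N$.

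Now comes the crux. Lemma \ref{lem2} requires positive Euclidean distance in $\R^N$, but the Euclidean distance $d(P', Q')$ could be $0$ even though the closures $\ol{P'}, \ol{Q'}$ are disjoint in $\R_\infty^N$, the coincidence happening only at infinity. To repair this I would fix a $C^\infty$ map $\theta: \R \to \R^2$, all of whose derivatives are bounded and which extends to a homeomorphism of $\R_\infty$ onto a circle in $\R^2$ (for instance $\theta(t) = \bigl(2t/(1+t^2),\, (t^2-1)/(t^2+1)\bigr)$, with $\infty \mapsto (0,1)$). Then $\Theta := \theta \times \cdots \times \theta : \R^N \to \R^{2N}$ is $C^\infty$ with all partial derivatives bounded and extends to a homeomorphism $\ol\Theta$ of $\R_\infty^N$ onto a compact subset of $\R^{2N}$. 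Consequently $\ol\Theta(\ol{P'})$ and $\ol\Theta(\ol{Q'})$ are disjoint compact subsets of $\R^{2N}$, hence have strictly positive Euclidean distance; a fortiori $d(\Theta(P'), \Theta(Q')) > 0$.

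Finally I would apply Lemma \ref{lem2} in $\R^{2N}$ to the sets $\Theta(P')$ and $\Theta(Q')$, obtaining a $C^\infty$ function $\Psi: \R^{2N} \to \R$ with bounded derivatives such that $\Psi \equiv 0$ on $\Theta(P')$ and $\Psi \equiv 1$ on $\Theta(Q')$. Setting $\Phi = \Psi \circ \Theta: \R^N \to \R$, the chain rule together with the boundedness of all derivatives of $\Psi$ and $\Theta$ shows that $\Phi$ is $C^\infty$ with $\sup_{a} |\partial^\xi \Phi(a)| < \infty$ for every $\xi$. Property ($*$) then guarantees that $\vp := \Phi(\vp_1, \dots, \vp_N) \in A(X)$, and by construction $\vp(x) = \Phi(\vp_1(x), \dots, \vp_N(x))$ equals $0$ for $x \in P$ and $1$ for $x \in Q$, as required. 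The main obstacle is exactly the passage through infinity just described: without the smooth compactifying embedding $\theta$, disjointness of the closures in $\R_\infty^N$ would not translate into the positive Euclidean separation that Lemma \ref{lem2} demands.
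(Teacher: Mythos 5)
Your argument is correct, and its skeleton is the same as the paper's: use compactness to cut down to a finite set $S=\{\vp_1,\dots,\vp_N\}$ of coordinates separating the two closures, get positive Euclidean separation of the projected sets, apply Lemma \ref{lem2}, and finish with property ($*$). The one genuine divergence is in how the point at infinity is handled, and here your stated motivation rests on a misconception. You claim that $d(P',Q')$ could be $0$ in $\R^N$ even though $\overline{P'}$ and $\overline{Q'}$ are disjoint in $\R_\infty^N$, "the coincidence happening only at infinity." In fact this cannot happen: if $\|p_n-q_n\|\to 0$ with $p_n\in P'$, $q_n\in Q'$, then in each coordinate either both sequences converge to the same real number or both diverge to the single point $\infty$ of the one-point compactification, so (after passing to a subsequence) they have a common limit in $\R_\infty^N$, which would lie in $\overline{P'}\cap\overline{Q'}$. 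This is exactly the sequential argument the paper uses to prove $d(P',Q')>0$ directly. Your workaround --- composing with a smooth embedding $\Theta$ of $\R_\infty^N$ into $\R^{2N}$ so that disjoint compacta acquire positive distance, and then pulling the cutoff function $\Psi$ back through $\Theta$ --- is logically sound (the chain-rule bookkeeping for bounded derivatives checks out, and $\Psi\circ\Theta$ still satisfies the hypotheses of property ($*$)), but it is a detour around an obstacle that is not actually there. Your covering of $K_1\times K_2$ by products of one-coordinate cylinders, versus the paper's separate covers of $K_1$ and $K_2$ with disjoint closures of the unions, is an immaterial variation. So: correct, slightly more elaborate than necessary, and you should convince yourself that disjointness of closures in the one-point compactification already forces positive Euclidean separation.
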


\begin{proof}
Let $P$ and $Q$ be as above.  By compactness of
$\overline{i(P)}^{\cA X}$ and $\overline{i(Q)}^{\cA X}$, there are
two finite collections $\U_1$ and $\U_2$ of basic open sets in
$\R_\infty^{A(X)}$ so that $i(P) \subseteq \cup\U_1$, $i(Q) \subseteq \cup
\U_2$ and $\cup\U_1$, $\cup \U_2$ have disjoint closures in
$\R_\infty^{A(X)}$. Each basic open set $U$ in $\R^{A(X)}_\infty$ is of the
form
\[ \{a \in \R_\infty^{A(X)}: a(\vp) \in V_\vp(U) \text{ for all } \vp \in S(U)\},\]
where $S(U)$ is a finite subset of $A(X)$ and $V_\vp(U)$ is open in
$\R_\infty$ for all $\vp \in S(U)$.
If some $V_\vp(U)$ contains $\infty$, replace it with $V_\vp(U)\cap \R$. The modified collections of basic open sets still cover $i(P)$ and $i(Q)$ respectively.
Let $S = \cup\{S(U): U \in \U_1\cup \U_2\}$.
Then $S$ is a finite subset of $A(X)$.  Denote by $P'$ and
$Q'$ the respective projections of $i(P)$ and $i(Q)$ onto $\R^S$. We
claim that $d(P',Q')> 0$. If the claim is false, there are sequences
$(p_n)$ and $(q_n)$ in $P$ and $Q$ respectively so that
$d(\rho(i(p_n)),\rho(i(q_n)))\to 0$, where $\rho$ is the projection
from $\R^{A(X)}$ onto $\R^S$. This implies that  $|\vp(p_{n}) -
\vp(q_{n})| \to 0$ for all $\vp \in S$. It follows that there is an
increasing sequence $(n_k)$ in $\N$ so that for all $\vp \in S$,
$(\vp(p_{n_k}))$ and $(\vp(q_{n_k}))$ both converge to the same
$r_\vp$ in $\R_\infty$.  Define $r_\vp \in \R_\infty$ arbitrarily
for $\vp \notin S$.  Then $r = (r_\vp)$ lies in the closure of both
$\cup \U_1$ and $\cup \U_2$ in $\R_\infty^{A(X)}$, a contradiction.
So the claim is verified.

Let $\Phi: \R^S \to \R$ be a function given by Lemma \ref{lem2}. By
property ($*$), $\Phi((\vp)_{\vp \in S}) \in A(X)$.  If $x \in P$,
$(\vp(x))_{\vp \in S} = \rho(i(x)) \in P'$ and hence
$\Phi((\vp(x)_{\vp\in S}) = 0$.  Similarly, $\Phi((\vp(x)_{\vp\in
S}) = 1$ for all $x \in Q$.
\end{proof}

Let $Y$ be a Hausdorff topological space and let $F$ be a Banach
space.  Suppose that $A(Y,F)$ is a vector subspace of $C(Y,F)$. Two
functions $f_1, f_2 \in A(X,E)$ are said to be {\em disjoint},
denoted $f_1 \perp f_2$, if for all $x\in X$, either $f_1(x) = 0$ or
$f_2(x) = 0$. Disjointness for functions in $A(Y,F)$ is defined
similarly. A map $T: A(X,E) \to A(Y,F)$ is said to be {\em
separating} or {\em disjointness preserving} if $f_1 \perp f_2$ implies $Tf_1 \perp Tf_2$.  In the
sequel, for $f \in A(X,E)$, let $C(f) = \{x\in X: f(x)\neq 0\}$ and
let $\overline{C}(f)$ be the closure of $i(C(f))$ in $\cA X$.

\begin{prop}\label{prop4}
Let $T: A(X,E)\to A(Y,F)$ be a linear separating map.  Then there is
a continuous function $h : \tilde{Y} \to \cA X$, \[\tilde{Y} = \{y \in Y: \text{
there exists } f\in A(X,E) \text{ with } Tf(y) \neq 0\},\] so that
if $f \in A(X,E)$ and $h(y) \notin \overline{C}(f)$, then $Tf(y) =
0$.
\end{prop}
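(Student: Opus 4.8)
The plan is to attach to each $y \in \tilde{Y}$ the set
\[ K_y = \bigcap\{\overline{C}(f) : f \in A(X,E),\ Tf(y) \neq 0\} \subseteq \cA X, \]
to show that $K_y$ is exactly one point, and to define $h(y)$ to be that point. Once this is done the required property is automatic: if $Tf(y) \neq 0$ then $h(y) \in K_y \subseteq \overline{C}(f)$, so contrapositively $h(y) \notin \overline{C}(f)$ forces $Tf(y) = 0$.

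Two preliminary facts drive everything. First, a pairwise separation fact: if $\overline{C}(f) \cap \overline{C}(g) = \emptyset$ then, since $i$ is injective and $i(C(f))$, $i(C(g))$ have disjoint closures, $C(f) \cap C(g) = \emptyset$, i.e.\ $f \perp g$; as $T$ is separating, $Tf \perp Tg$, so at the point $y$ at least one of $Tf(y)$, $Tg(y)$ vanishes. Hence two functions that are both nonvanishing under $T$ at $y$ cannot have disjoint cozero-closures. Second --- and this is the crux --- a localization lemma: \emph{for every finite open cover $W_1, \dots, W_m$ of $\cA X$ there exist $j$ and $g \in A(X,E)$ with $Tg(y) \neq 0$ and $\overline{C}(g) \subseteq W_j$.} Granting this, existence and uniqueness of the support point follow quickly. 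For existence, if $K_y = \emptyset$ then $\{\cA X \setminus \overline{C}(f)\}$ is an open cover of the compact space $\cA X$, so finitely many $\overline{C}(f_i)$ already have empty intersection; the localization lemma produces $g$ with $Tg(y) \neq 0$ and $\overline{C}(g) \subseteq \cA X \setminus \overline{C}(f_i)$ for some $i$, contradicting the pairwise fact. For uniqueness, if $x_1 \neq x_2$ both lay in $K_y$, I apply the lemma to the cover $\{W, \cA X \setminus \{x_1\}\}$, where $W$ is an open set containing $x_1$ but not $x_2$ (Hausdorffness of $\cA X$): in either alternative the resulting $g$ has $Tg(y) \neq 0$ yet $\overline{C}(g)$ omits one of $x_1, x_2$, contradicting $x_1, x_2 \in K_y \subseteq \overline{C}(g)$.

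The main work is therefore the localization lemma, which I would prove by manufacturing a partition of unity inside $A(X)$ subordinate to $\{W_j\}$. Using normality of $\cA X$ and the shrinking lemma I first pass to open covers $W_j'' \subseteq \overline{W_j''} \subseteq W_j' \subseteq \overline{W_j'} \subseteq W_j$. For each $j$, Proposition \ref{prop2} applied to $P_j = i^{-1}(\overline{W_j''})$ and $Q_j = i^{-1}(\cA X \setminus W_j')$ --- whose images have disjoint closures in $\cA X$ --- yields $\psi_j \in A(X)$ equal to $1$ on $P_j$ and $0$ on $Q_j$; composing with a fixed smooth cutoff and invoking property ($*$) I may assume $0 \le \psi_j \le 1$, so that $\overline{C}(\psi_j) \subseteq \overline{W_j'} \subseteq W_j$ while at every point of $X$ some $\psi_k$ equals $1$. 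Setting $\vp_j = \psi_j \prod_{l<j}(1-\psi_l)$ --- again legitimately in $A(X)$ by composing the relevant polynomial with a smooth function of bounded derivatives, since all $\psi_l$ take values in $[0,1]$ --- telescoping gives $\sum_j \vp_j = 1 - \prod_j(1-\psi_j) = 1$ on $X$, with $\overline{C}(\vp_j) \subseteq \overline{C}(\psi_j) \subseteq W_j$. Picking $f$ with $Tf(y) \neq 0$ (possible since $y \in \tilde{Y}$) and writing $f = \sum_j \vp_j f$, linearity of $T$ forces $T(\vp_j f)(y) \neq 0$ for some $j$, and $g = \vp_j f$ is the desired function. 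The delicate point is exactly this upgrade from the Urysohn-type separation of Proposition \ref{prop2} to a genuine $A(X)$-valued partition of unity of $\cA X$: one must keep supports inside the $W_j$ (forcing the double shrinking) and stay within the smooth-composition closure guaranteed by property ($*$) rather than using unbounded products directly.

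Finally, continuity of $h$ is a direct consequence of the same lemma: given $y_0 \in \tilde{Y}$ and open $W \ni h(y_0)$, the lemma applied to $\{W, \cA X \setminus \{h(y_0)\}\}$ produces $f$ with $Tf(y_0) \neq 0$ and $\overline{C}(f) \subseteq W$ (the other alternative would put $h(y_0) \notin \overline{C}(f)$, impossible). Then $O = \{y \in Y : Tf(y) \neq 0\}$ is open in $Y$, contained in $\tilde{Y}$, and contains $y_0$; for $y \in O$ we have $f \in A(X,E)$ with $Tf(y)\neq 0$, so $h(y) \in K_y \subseteq \overline{C}(f) \subseteq W$, giving $h(O) \subseteq W$.
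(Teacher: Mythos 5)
Your proof is correct, but it is organized around a different key lemma than the paper's. The paper introduces the family $\cO_y$ of open sets $U \subseteq \cA X$ containing $\overline{C}(f)$ for some $f$ with $Tf(y) \neq 0$, and its central technical step (Lemma \ref{lem5}) is that $\cO_y$ is closed under finite intersections; this is proved by a \emph{binary} splitting $f_1 = \vp f_1 + (f_1 - \vp f_1)$ with a single Urysohn-type function $\vp$ from Proposition \ref{prop2}, after which existence, uniqueness (Lemma \ref{lem6}) and continuity (Lemma \ref{lem7}) each get a separate but similarly flavored argument. You instead work directly with $K_y = \bigcap\{\overline{C}(f) : Tf(y)\neq 0\}$ and prove a single localization lemma -- for any finite open cover of $\cA X$ some $g$ with $Tg(y)\neq 0$ has $\overline{C}(g)$ inside one member -- via a genuine $A(X)$-valued partition of unity subordinate to the cover, from which existence, uniqueness, and continuity all drop out uniformly. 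Your route demands more from property ($*$): the products $\psi_j\prod_{l<j}(1-\psi_l)$ are only in $A(X)$ after replacing the polynomial by a bounded smooth function with bounded derivatives agreeing with it on $[0,1]^j$, a point you correctly flag and which is legitimate since the $\psi_l$ have been normalized to take values in $[0,1]$; the paper sidesteps this entirely by never multiplying more than one cutoff at a time. What you gain is a cleaner, more reusable statement (the localization lemma is essentially the assertion that $\{\overline{C}(f): Tf(y)\neq 0\}$ generates a filter with the stated covering property) and a uniqueness/continuity argument that is a one-line application of it; what the paper gains is economy in the use of property ($*$). All the supporting details check out: the shrinking lemma applies because $\cA X$ is compact Hausdorff hence normal, the hypothesis $\overline{i(P_j)}^{\cA X}\cap\overline{i(Q_j)}^{\cA X}=\emptyset$ of Proposition \ref{prop2} holds for your choice of $P_j, Q_j$, and the two definitions of $h(y)$ (yours and the paper's) coincide since both singletons satisfy the defining property of the proposition.
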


We will call $h$ the {\em support map} of the operator $T$. The proof of the proposition is decomposed into the next three
lemmas.   For any $y \in Y$, Let $\cO_y$ be the family of all open
subsets $U$ of $\cA X$ so that there exists $f \in A(X,E)$ with
$Tf(y)\neq 0$ and $\overline{C}(f) \subseteq U$.

\begin{lem}\label{lem5}
$\cO_y$ is closed under finite intersections.
\end{lem}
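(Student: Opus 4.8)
The plan is to reduce to binary intersections: given $U_1, U_2 \in \cO_y$ I will exhibit a single $f \in A(X,E)$ witnessing $U_1 \cap U_2 \in \cO_y$, the case of more sets following by induction. Fix witnesses $f_1, f_2 \in A(X,E)$ with $Tf_j(y) \neq 0$ and $\overline{C}(f_j) \subseteq U_j$. The guiding idea is to localize $f_2$ by multiplying it with a suitable $\vp \in A(X)$, so that $\vp f_2$ retains a nonzero image at $y$ while the closure of its cozero set is pushed inside $U_1 \cap U_2$. The survival of the image at $y$ will be forced by disjointness from $f_1$, and the control of the support by a careful choice of $\vp$.

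First I would carry out the separation. Since $\cA X$ is compact Hausdorff, hence normal, and $\overline{C}(f_1) \subseteq U_1$, I choose an open set $V \subseteq \cA X$ with $\overline{C}(f_1) \subseteq V \subseteq \overline{V} \subseteq U_1$. Put $P = C(f_1)$ and $Q = \{x \in C(f_2) : i(x) \notin \overline{V}\}$. Then $\overline{i(P)}^{\cA X} = \overline{C}(f_1) \subseteq V$, while $i(Q) \subseteq \cA X \setminus \overline{V} \subseteq \cA X \setminus V$ forces $\overline{i(Q)}^{\cA X} \subseteq \cA X \setminus V$ because $\cA X \setminus V$ is closed; hence the two closures are disjoint. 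Proposition \ref{prop2}, applied with the roles of the two sets interchanged, then yields $\vp \in A(X)$ with $\vp(C(f_1)) \subseteq \{1\}$ and $\vp(Q) \subseteq \{0\}$.

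Next I would verify the two properties of $f := \vp f_2$, which lies in $A(X,E)$ by $A(X)$-multiplicativity. For the support, $C(\vp f_2) = C(\vp) \cap C(f_2)$ is disjoint from $Q$, where $\vp$ vanishes, so $i(C(\vp f_2)) \subseteq \overline{V}$ and therefore $\overline{C}(\vp f_2) \subseteq \overline{V} \subseteq U_1$; combined with $\overline{C}(\vp f_2) \subseteq \overline{C}(f_2) \subseteq U_2$, this gives $\overline{C}(f) \subseteq U_1 \cap U_2$. For the nonvanishing at $y$, I claim $f_1 \perp (1-\vp)f_2$: at any $x$ with $f_1(x) \neq 0$ one has $x \in C(f_1)$, where $\vp(x) = 1$, so $(1-\vp(x))f_2(x) = 0$. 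Since $T$ is separating, $Tf_1 \perp T((1-\vp)f_2)$, and as $Tf_1(y) \neq 0$ this forces $T((1-\vp)f_2)(y) = 0$. By linearity $Tf(y) = Tf_2(y) - T((1-\vp)f_2)(y) = Tf_2(y) \neq 0$, so $f$ witnesses $U_1 \cap U_2 \in \cO_y$.

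The step requiring the most care is the support estimate: I must arrange that $\vp$ vanishes on the entire portion of $C(f_2)$ lying outside a neighborhood of $\overline{C}(f_1)$ that is still contained in $U_1$, so that multiplication by $\vp$ confines $f_2$ to $U_1$ without destroying its value at $y$. This is precisely the role of the intermediate set $V$ and of the definition of $Q$; managing simultaneously the two constraints (being far from $\overline{C}(f_1)$, yet remaining inside $U_1$) is the delicate point. The remaining ingredients---normality of $\cA X$, the separation provided by Proposition \ref{prop2}, the $A(X)$-multiplicativity of $A(X,E)$, and the defining property of a separating map---combine routinely.
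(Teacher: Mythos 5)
Your proof is correct, and while it rests on the same basic machinery as the paper's (produce a cutoff $\vp$ via Proposition \ref{prop2}, exploit disjointness and linearity), the execution is genuinely different and somewhat cleaner. The paper separates a small neighborhood of $\overline{C}(f_1)\cap\overline{C}(f_2)$ from the complement of a slightly larger one, obtains $\vp f_1 \perp \vp f_2$, and must then split into cases (``without loss of generality $T(\vp f_1)(y)=0$''), taking $f_j - \vp f_j$ as the witness for whichever index survives. You instead separate all of $\overline{C}(f_1)$ from the portion of $C(f_2)$ outside $\overline{V}$, arrange $\vp \equiv 1$ on $C(f_1)$, and prove $f_1 \perp (1-\vp)f_2$; since $Tf_1(y)\neq 0$ is known, the separating property forces $T((1-\vp)f_2)(y)=0$ outright, with no case distinction, and $\vp f_2$ is the witness with $T(\vp f_2)(y)=Tf_2(y)\neq 0$. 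The asymmetric choice buys you a shorter argument at no cost: all the steps check out ($\overline{i(Q)}\subseteq \cA X\setminus V$ because $\cA X\setminus V$ is closed, so Proposition \ref{prop2} applies; $i(C(\vp f_2))\subseteq\overline{V}$ gives $\overline{C}(\vp f_2)\subseteq U_1$, while $\overline{C}(\vp f_2)\subseteq\overline{C}(f_2)\subseteq U_2$; and $(1-\vp)f_2 = f_2-\vp f_2$ lies in $A(X,E)$ by multiplicativity and linearity). The paper's symmetric version is perhaps more robust in spirit (it never privileges one witness over the other), but for this lemma your route is a legitimate simplification.
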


\begin{proof}
Suppose that $U_1, U_2 \in \cO_y$ and $f_1, f_2$ are as in the definition.
Choose open sets $V_1, V_2$ in $\cA X$ so that
\[ \overline{C}(f_1) \cap \overline{C}(f_2) \subseteq V_1  \subseteq
\overline{V_1}^{\cA X} \subseteq V_2 \subseteq \overline{V_2}^{\cA X}\subseteq U_1 \cap U_2.\]
Let $P = i^{-1}(V_1)$ and $Q = i^{-1}(\cA X \bs \overline{V_2}^{\cA
X})$. Then $\overline{i(P)}^{\cA X} \subseteq \overline{V_1}^{\cA
X}$ and $\overline{i(Q)}^{\cA X} \subseteq \cA X\bs V_2$ and hence
$\overline{i(P)}^{\cA X} \cap \overline{i(Q)}^{\cA X} = \emptyset$.
By Proposition \ref{prop2}, there exists $\vp \in A(X)$ so that $\vp(P) \subseteq \{0\}$ and
$\vp(Q)\subseteq \{1\}$.  Then $\vp f_j \in A(X,E)$, $j = 1,2$.  If
$x \in X$ and $(\vp f_1)(x) \neq 0$, then $\vp(x) \neq 0$ and
$f_1(x) \neq 0$.  Hence  $i(x) \notin \overline{C}(f_1)\cap
\overline{C}(f_2)$ and $i(x) \in \overline{C}(f_1)$. Thus $f_2(x) = 0$. Therefore,
$\vp f_1 \perp \vp f_2$ and it follows that either $T(\vp f_1)(y) =
0$ or $T(\vp f_2)(y) = 0$.  Without loss of generality, we may
assume that the former holds.  Then $T(f_1 - \vp f_1)(y) = (Tf_1)(y)
\neq 0$.  Also, if $x \in X$ and $(f_1-\vp f_1)(x) \neq 0$, then
$\vp(x) \neq 1$ and hence $i(x) \in  \overline{V_2}^{\cA X}$. Thus
$\overline{C}(f_1-\vp f_1)\subseteq \overline{V_2}^{\cA X} \subseteq
U_1 \cap U_2$.
\end{proof}

It is easy to see that $\tilde{Y}$ is an open subset of $Y$.
Note that $\cO_y \neq \emptyset$ if $y \in \tilde{Y}$.
By compactness of $\cA X$, $\cap\{\overline{U}^{\cA X}: U \in \cO_y\} \neq \emptyset$ if $y \in \tilde{Y}$.

\begin{lem}\label{lem6}
If $y \in \tilde{Y}$, then $\cap\{\overline{U}^{\cA X}: U \in \cO_y\}$ contains exactly one point.
\end{lem}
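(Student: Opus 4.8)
The plan is to show that the intersection contains at most one point, since non-emptiness already follows from Lemma~\ref{lem5} together with compactness: by Lemma~\ref{lem5} the family $\cO_y$ is closed under finite intersections, so for $U_1,\dots,U_n \in \cO_y$ the set $U_1 \cap \dots \cap U_n$ again admits a witness $f$ with $\overline{C}(f) \subseteq U_1 \cap \dots \cap U_n$ and $Tf(y) \neq 0$; since $f \neq 0$ this shows $\overline{U_1}^{\cA X} \cap \dots \cap \overline{U_n}^{\cA X} \neq \emptyset$, so $\{\overline{U}^{\cA X} : U \in \cO_y\}$ has the finite intersection property and its intersection over the compact space $\cA X$ is non-empty.

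Before addressing uniqueness I would record a convenient reformulation: a point $z$ lies in $\cap\{\overline{U}^{\cA X} : U \in \cO_y\}$ if and only if $z \in \overline{C}(f)$ for every $f \in A(X,E)$ with $Tf(y) \neq 0$. Indeed, if $z$ lies in the intersection and $Tf(y) \neq 0$, then every open $U \supseteq \overline{C}(f)$ belongs to $\cO_y$ (with witness $f$), so $z \in \overline{U}^{\cA X}$; intersecting over all such $U$ and using the regularity of the compact Hausdorff space $\cA X$ gives $z \in \overline{C}(f)$. The reverse inclusion is immediate from the definition of $\cO_y$.

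Now suppose, toward a contradiction, that distinct points $z_1 \neq z_2$ both lie in the intersection. Since $\cA X$ is compact Hausdorff, hence normal and regular, I would choose open sets with $z_1 \in G_1 \subseteq \overline{G_1}^{\cA X} \subseteq W_1$, $z_2 \in W_2$, and $\overline{W_1}^{\cA X} \cap \overline{W_2}^{\cA X} = \emptyset$. Fixing $f \in A(X,E)$ with $Tf(y) \neq 0$ (available since $y \in \tilde{Y}$) and setting $P = i^{-1}(\cA X \bs W_1)$ and $Q = i^{-1}(\overline{G_1}^{\cA X})$, one checks that $\overline{i(P)}^{\cA X} \subseteq \cA X \bs W_1$ and $\overline{i(Q)}^{\cA X} \subseteq \overline{G_1}^{\cA X} \subseteq W_1$ are disjoint, so Proposition~\ref{prop2} yields $\vp \in A(X)$ with $\vp(P) \subseteq \{0\}$ and $\vp(Q) \subseteq \{1\}$. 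Writing $f = \vp f + (1-\vp)f$, linearity gives $Tf(y) = T(\vp f)(y) + T((1-\vp)f)(y) \neq 0$, so at least one term is non-zero. Because $\vp$ vanishes off $i^{-1}(W_1)$, we have $C(\vp f) \subseteq i^{-1}(W_1)$ and hence $\overline{C}(\vp f) \subseteq \overline{W_1}^{\cA X}$, which misses $z_2$; because $\vp \equiv 1$ on $i^{-1}(\overline{G_1}^{\cA X})$, the carrier $C((1-\vp)f)$ misses $i^{-1}(G_1)$, so $\overline{C}((1-\vp)f)$ is disjoint from the open set $G_1$ and misses $z_1$. If $T(\vp f)(y) \neq 0$, then $z_2 \notin \overline{C}(\vp f)$ contradicts the reformulation; if $T((1-\vp)f)(y) \neq 0$, then $z_1 \notin \overline{C}((1-\vp)f)$ does. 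Either way we reach a contradiction, whence the intersection is a singleton.

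I expect the main obstacle to be the bookkeeping in this localization step, namely verifying that the closures in $\cA X$ of the carriers $C(\vp f)$ and $C((1-\vp)f)$ fall on the correct sides of the separation. This hinges on the elementary inclusions $i(i^{-1}(S)) \subseteq S$ and, crucially, on taking $\vp$ to equal $1$ on the slightly enlarged set $i^{-1}(\overline{G_1}^{\cA X})$ rather than merely $i^{-1}(G_1)$, so that the support of $(1-\vp)f$ stays off the open neighbourhood $G_1$ of $z_1$. I note that the separating property of $T$ is not used directly in the uniqueness argument; it enters only through Lemma~\ref{lem5} to secure non-emptiness, while the separation of the two putative limit points rests on linearity of $T$ together with Proposition~\ref{prop2}.
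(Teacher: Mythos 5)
Your proof is correct and takes essentially the same route as the paper: non-emptiness via Lemma \ref{lem5} plus compactness (which the paper records just before the lemma), and uniqueness by separating the two putative points, using Proposition \ref{prop2} to build $\vp$ equal to $1$ near one point and $0$ near the other, and splitting $f = \vp f + (f-\vp f)$ so that each piece has carrier closure missing one of the points. Your ``reformulation'' of membership in the intersection is exactly the regularity argument the paper uses to conclude $W \notin \cO_y$ and hence $T(\vp f)(y)=0$; the two write-ups differ only in bookkeeping.
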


\begin{proof}
Suppose that there are distinct points $x_1, x_2$ in
$\cap\{\overline{U}^{\cA X}: U \in \cO_y\}$.   Let $U$ and $V$ be
disjoint open neighborhoods of $x_1$ and $x_2$ respectively in $\cA
X$.  Let $U_1$ and $V_1$ be open neighborhoods of $x_1$ and $x_2$
respectively in $\cA X$ so that $\overline{U_1}^{\cA X} \subseteq U$
and $\overline{V_1}^{\cA X} \subseteq V$.  By Proposition \ref{prop2}, there exists $\vp \in
A(X)$ such that $\vp(i^{-1}(U_1)) \subseteq \{0\}$ and
$\vp(i^{-1}(V_1)) \subseteq \{1\}$.  For any $f \in A(X,E)$,
$\overline{C}(\vp f) \subseteq \cA X \bs U_1$ and
$\overline{C}(f-\vp f) \subseteq \cA X \bs V_1$.  In particular,
$x_1 \notin \overline{C}(\vp f)$ and $x_2 \notin \overline{C}(f-\vp
f)$.  Thus there exists an open neighborhood $W$ of
$\overline{C}(\vp f)$ so that $x_1 \notin \overline{W}^{\cA X}$.  It
follows that $W \notin \cO_y$ and hence $T(\vp f)(y) = 0$.
Similarly, $T(f- \vp f)(y) = 0$.  Hence $Tf(y) = T(\vp f)(y) +
T(f-\vp f)(y) = 0$ for all $f \in A(X,E)$, contrary to the fact that
$y \in \tilde{Y}$.
\end{proof}

For each $y \in \tilde{Y}$, define $h(y)$ to be the unique point in
$\cap\{\overline{U}^{\cA X}: U \in \cO_y\}$. If $f \in A(X,E)$ and
$h(y) \notin \overline{C}(f)$, then there exists an open set $U$ in
$\cA X$ so that $\overline{C}(f)\subseteq U$ and $h(y) \notin
\overline{U}^{\cA X}$.  Thus $U \notin \cO_y$.  Hence $Tf(y) = 0$.

\begin{lem}\label{lem7}
The map $h :\tilde{Y} \to \cA X$ is continuous.
\end{lem}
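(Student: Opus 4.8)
The plan is to verify continuity at an arbitrary point $y_0 \in \tilde{Y}$ using the standard neighborhood criterion. So I fix an open set $G \subseteq \cA X$ with $h(y_0) \in G$, and aim to produce an open neighborhood $N$ of $y_0$ in $\tilde{Y}$ with $h(N) \subseteq G$. The whole argument hinges on finding a single witness function whose carrier is trapped inside $G$ and which continues to witness membership in $\cO_y$ for all $y$ near $y_0$.

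The first and main step is to locate a single $U_0 \in \cO_{y_0}$ with $\overline{U_0}^{\cA X} \subseteq G$. Recall that $h(y_0)$ is the unique point of $\bigcap\{\overline{U}^{\cA X}: U \in \cO_{y_0}\}$. I would consider the family of compact sets $\{\overline{U}^{\cA X} \cap (\cA X \setminus G): U \in \cO_{y_0}\}$, whose total intersection equals $\{h(y_0)\} \setminus G = \emptyset$. By the finite intersection property of the compact space $\cA X$, some finite subfamily already has empty intersection, i.e.\ $\bigcap_{j=1}^n \overline{U_j}^{\cA X} \subseteq G$ for certain $U_1,\dots,U_n \in \cO_{y_0}$. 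Since $\cO_{y_0}$ is closed under finite intersections by Lemma \ref{lem5}, the set $U_0 := \bigcap_{j=1}^n U_j$ again belongs to $\cO_{y_0}$, and clearly $\overline{U_0}^{\cA X} \subseteq \bigcap_j \overline{U_j}^{\cA X} \subseteq G$.

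By the definition of $\cO_{y_0}$, there is then $f \in A(X,E)$ with $Tf(y_0) \neq 0$ and $\overline{C}(f) \subseteq U_0$. The final step uses the observation that this one $f$ serves as a witness for every nearby $y$: since $Tf \in C(Y,F)$, the set $N := \{y \in Y: Tf(y) \neq 0\}$ is open, contains $y_0$, and is contained in $\tilde{Y}$. For each $y \in N$ the same $f$ shows $U_0 \in \cO_y$, and hence $h(y) \in \overline{U_0}^{\cA X} \subseteq G$ by the defining property of $h(y)$. Thus $h(N) \subseteq G$, establishing continuity at $y_0$.

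The only step that requires real care is the compactness extraction in the second paragraph, namely passing from ``the intersection of all the closures $\overline{U}^{\cA X}$ is the single point $h(y_0) \in G$'' to ``one member $\overline{U_0}^{\cA X}$ already lies inside $G$.'' This is exactly where Lemma \ref{lem5} does the essential work, guaranteeing that the finite intersection handed to us by compactness remains inside the family $\cO_{y_0}$; without closure under finite intersections one would only get a finite collection rather than a usable single neighborhood. Everything else is routine once one notes that a fixed witness $f$ controls $h(y)$ uniformly over the open set $\{Tf \neq 0\}$.
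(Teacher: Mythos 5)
Your proof is correct. The second half (fixing one witness $f$ with $\overline{C}(f)\subseteq U_0$ and using continuity of $Tf$ to get an open set $N=\{Tf\neq 0\}$ on which $U_0\in\cO_y$ forces $h(y)\in\overline{U_0}^{\cA X}\subseteq G$) is exactly the closing move of the paper's proof. Where you genuinely diverge is in how the witness is produced. The paper starts from an arbitrary $f$ with $Tf(y_0)\neq 0$, uses Proposition \ref{prop2} to build a cutoff $\vp$ that is $1$ near $h(y_0)$ and $0$ off $U$, and then argues via the support property of Proposition \ref{prop4} that $T(f-\vp f)(y_0)=0$, so that $\vp f$ is a witness with small carrier. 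You instead extract the witness abstractly: the closed sets $\overline{U}^{\cA X}\cap(\cA X\setminus G)$, $U\in\cO_{y_0}$, have empty total intersection because $\bigcap_U\overline{U}^{\cA X}=\{h(y_0)\}\subseteq G$, so compactness of $\cA X$ gives a finite subfamily with empty intersection, and Lemma \ref{lem5} converts that finite subfamily into a single $U_0\in\cO_{y_0}$ with $\overline{U_0}^{\cA X}\subseteq G$. Your route reuses the already-established closure of $\cO_{y_0}$ under finite intersections and avoids a second appeal to the separation machinery of Proposition \ref{prop2}, which makes it arguably more economical; the paper's route is more constructive in that it exhibits the witness $\vp f$ explicitly, which is closer in spirit to the manipulations used throughout the rest of the section. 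Both are complete proofs of the lemma.
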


\begin{proof}
Fix $y_0 \in \tilde{Y}$.  Let $U$ be an open neighborhood of $x_0 =
h(y_0)$ in $\cA X$.   Let $V$ be an open neighborhood of $x_0$ in
$\cA X$ such that $\overline{V}^{\cA X} \subseteq U$.  Take $P =
i^{-1}(\cA X\bs U)$ and $Q = i^{-1}(V)$.  Then $\overline{i(P)}^{\cA X}\cap \overline{i(Q)}^{\cA X} = \emptyset$.  Hence there exists $\vp \in
A(X)$ so that $\vp(P) \subseteq \{0\}$ and $\vp(Q)\subseteq \{1\}$.
Let $f \in A(X,E)$ be such that $Tf(y_0) \neq 0$.  If $x \in Q$,
then $(f-\vp f)(x) = 0$.  Hence $x_0 \notin \overline{C}(f-\vp f)$.
By the above, $T(f-\vp f)(y_0) = 0$.  So $T(\vp f)(y_0) \neq 0$.
Choose a neighborhood $W$ of $y_0$ in $Y$ such that $T(\vp f)(y)
\neq 0$ for all $y \in W$.  If $y\in W \cap \tilde{Y}$ and $h(y)
\notin\overline{U}^{\cA X}$, then $h(y) \notin \overline{C}(\vp f)$.
Hence $T(\vp f)(y) =  0$, a contradiction.  So we must have $h(W
\cap \tilde{Y}) \subseteq \overline{U}^{\cA X}$. This proves that $h$ is
continuous on $\tilde{Y}$.
\end{proof}

In the sequel, we will be concerned with operators that are continuous with respect to certain topologies.  A useful consequence of it is that the map $h$ maps $\tilde{Y}$ into $i(X)$.
A linear topology on $A(X,E)$ is said to be {\em compactly determined} if for every neighborhood $U$ of $0$ in $A(X,E)$, there is a compact subset $K$ of $X$ such that $f \in U$ for all $f \in A(X,E)$ with $f_{|K} = 0$.

\begin{prop}\label{prop8}
Suppose that $T: A(X,E)\to A(Y,F)$ is linear and separating.  Assume that $T$ is continuous with respect to a compactly determined linear topology on $A(X,E)$ and a linear topology on $A(Y,F)$ that is stronger than the topology of pointwise convergence.  Let $h: \tilde{Y} \to \cA X$ be the support map.  Then $h(\tilde{Y}) \subseteq i(X)$.
\end{prop}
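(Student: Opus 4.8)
The plan is to argue by contradiction: suppose there is $y_0 \in \tilde{Y}$ with $x_0 := h(y_0) \in \cA X \bs i(X)$. The first step is to convert the continuity hypotheses into a localization statement, namely that $Tf(y_0) = 0$ whenever $f \in A(X,E)$ vanishes on a suitable compact set. To see this, I observe that the evaluation $g \mapsto g(y_0)$ is continuous on $A(Y,F)$ for the topology of pointwise convergence, hence for the (stronger) given topology; composing with $T$ shows that $L : f \mapsto Tf(y_0)$ is a continuous linear map from $A(X,E)$, with its compactly determined topology, into the Banach space $F$. I choose a neighborhood $U$ of $0$ with $\sup_{f\in U}\|L(f)\| < \infty$, and let $K \subseteq X$ be the compact set furnished by compact determinacy for $U$. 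If $f_{|K} = 0$, then $(nf)_{|K} = 0$, so $nf \in U$ and $\|L(nf)\| = n\|L(f)\|$ stays bounded in $n$; hence $L(f) = Tf(y_0) = 0$.

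The second step is to separate the ideal point $x_0$ from $K$ inside the compactification. Since $x_0 \notin i(X)$ we have $x_0 \notin i(K)$, and $i(K)$ is compact, hence closed, in the compact Hausdorff (so normal) space $\cA X$. Therefore I can choose open sets $V \supseteq i(K)$ and $W \ni x_0$ in $\cA X$ with $\overline{V}^{\cA X} \cap \overline{W}^{\cA X} = \emptyset$. Put $P = i^{-1}(V)$ and $Q = i^{-1}(W)$; then $\overline{i(P)}^{\cA X} \subseteq \overline{V}^{\cA X}$ and $\overline{i(Q)}^{\cA X} \subseteq \overline{W}^{\cA X}$ are disjoint, so Proposition \ref{prop2} yields $\vp \in A(X)$ with $\vp(P) \subseteq \{0\}$ and $\vp(Q) \subseteq \{1\}$. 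In particular $\vp$ vanishes on $K$ (as $K \subseteq P$) and equals $1$ on $Q = i^{-1}(W)$.

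For the final step, I pick $f \in A(X,E)$ with $Tf(y_0) \neq 0$, which exists because $y_0 \in \tilde{Y}$, and split $f = \vp f + (f - \vp f)$, both summands lying in $A(X,E)$ by $A(X)$-multiplicativity. Since $\vp f$ vanishes on $K$, the localization from the first step gives $T(\vp f)(y_0) = 0$, whence $T(f - \vp f)(y_0) = Tf(y_0) \neq 0$. On the other hand, $C(f - \vp f) \subseteq \{x : \vp(x) \neq 1\} \subseteq X \bs Q$, so $i(C(f-\vp f))$ misses $W$ and therefore $\overline{C}(f - \vp f) \subseteq \cA X \bs W$. As $x_0 = h(y_0) \in W$, we get $h(y_0) \notin \overline{C}(f - \vp f)$, and the defining property of the support map (Proposition \ref{prop4}) forces $T(f - \vp f)(y_0) = 0$, a contradiction. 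I expect the main obstacle to be the first step: one must extract from the mere continuity of $T$ a genuinely finitary (``compact'') control on where the value $Tf(y_0)$ can originate, and it is the scaling trick upgrading ``$L(f)$ bounded'' to ``$L(f) = 0$'' that makes compact determinacy bite. Once $K$ is in hand, the separation of $x_0$ from $i(K)$ and the splitting of $f$ are routine applications of Proposition \ref{prop2} and the support map.
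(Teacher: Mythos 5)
Your proof is correct and follows essentially the same route as the paper: compact determinacy yields a compact set $K$ controlling $Tf(y_0)$, Proposition \ref{prop2} separates $x_0$ from $i(K)$ via some $\vp$, and the splitting $f = \vp f + (f-\vp f)$ combined with Proposition \ref{prop4} gives the contradiction. The only (harmless) variation is your scaling trick upgrading boundedness of $L$ on $U$ to exact vanishing $T(\vp f)(y_0)=0$, where the paper instead derives the contradiction from the strict inequality $\|T(\vp f_0)(y_0)\| < \|Tf_0(y_0)\|$ together with $T(\vp f_0)(y_0) = Tf_0(y_0)$.
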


\begin{proof}
Suppose on the contrary that there exists $y_0 \in \tilde{Y}$ such that $h(y_0) = x_0 \notin i(X)$. There exists $f_0 \in A(X,E)$ such that $Tf_0(y_0) \neq 0$. By assumption, there is a neighborhood $V$ of $0$ in $A(Y,F)$ such that $\|g(y_0)\| <\|Tf_0(y_0)\|$ for all $g \in V$.
By continuity of $T$, there is a neighborhood $U$ of $0$ in $A(X,E)$ so that $TU \subseteq V$.  Since the topology on $A(X,E)$ is compactly determined, there exists a compact set $K$ in $X$ so that $f \in U$ for all $f \in A(X,E)$ with $f_{|K} = 0$. Now, $x_0$ does not lie within the compact set $i(K)$.  Hence there is an open neighborhood $\widetilde{Q}$ of $x_0$ in $\cA X$ such that $\overline{\widetilde{Q}}^{\cA X} \cap i(K) = \emptyset$.  Let $Q = i^{-1}(\widetilde{Q}) \subseteq X$.  Then
\[ \overline{i(K)}^{\cA X} \cap \overline{i(Q)}^{\cA X} \subseteq i(K) \cap \overline{\widetilde{Q}}^{\cA X} = \emptyset. \]
By Proposition \ref{prop2}, there exists $\vp \in A(X)$ such that $\vp(K) \subseteq \{0\}$ and $\vp(Q) \subseteq \{1\}$.
Consider the function $\vp f_0 \in A(X,E)$.    We claim that $x_0 \notin \overline{C}(f_0 - \vp f_0)$.  For otherwise, there exists $x \in \widetilde{Q}\cap i(C(f_0-\vp f_0)) \subseteq \widetilde{Q}\cap i(X)$.  Then $(f_0 - \vp f_0)(i^{-1}(x)) \neq 0$ and $x \in \widetilde{Q}$.  This contradicts the fact that $i^{-1}(x) \in i^{-1}(\widetilde{Q}) = Q$.  Thus the claim is verified.  By Proposition \ref{prop4}, $T(\vp f_0)(y_0) = Tf_0(y_0)$.  But since $\vp f_{0\,|K} = 0$, $T(\vp f_0) \in V$.  Consequently, $\|T(\vp f_0)(y_0)\| < \|Tf_0(y_0)\|$ by choice of $V$.  This contradiction completes the proof of the proposition.
\end{proof}

\section{Continuous separating maps between spaces of differentiable functions}

From hereon we will focus our attention on spaces of differentiable functions. Let $X$ and $Y$ be open subsets of Banach spaces $G$ and $H$ respectively.  Assume that $1 \leq p \leq \infty$, $0 \leq q \leq \infty$ and let $E$ and $F$ be Banach spaces.  The space  $C^p(X,E)$ consists of all functions $f:X \to E$ so that for all $k \in \N_p = \{i\in \N \cup\{0\}, i \leq p\}$, the (Fr\'{e}chet) derivative $D^kf$ is a continuous function from $X$ into $\cS^k(G,E)$, the space of all bounded symmetric $k$-linear maps from $G^k$ into $E$. (For notational convenience, we let $D^0f = f$ and $\cS^0(G,E) = E$.)
We will assume that there is a {\em bump function} $\vp \in C^p(G) = C^p(G,\R)$ so that $\vp(x) = 1$ if $\|x\| \leq 1/2$, $\vp(x) = 0$ if $\|x\| \geq 1$, and that $\sup_{x\in G}\|D^k\vp(x)\| < \infty$ for all $k \in \N_p$. Such is the case, for example, if $G$ is a subspace of $\ell^r$, $p \leq r < \infty$, or a subspace of $\ell^{2n}$ for some $n \in \N$. Refer to \cite{DGZ} for information on smoothness in Banach spaces.
The space $C^p(X)$ satisfies property ($*$) from \S 1 and,
with the assumed existence of bump functions as above, it separates points from closed sets.

Suppose that $T: C^p(X,E) \to C^q(Y,F)$ is  a linear separating map.  If we take $A(X,E) = C^p(X,E)$ and $A(X) = C^p(X)$, then Proposition \ref{prop4} applies to $T$.  Furthermore, it is clear that the set $\tilde{Y}$ given by Proposition \ref{prop4} is an open subset of $Y$ and thus an open subset of $H$.   In place of $T$, it suffices to consider the map from $C^p(X,E)$ into $C^q(\tilde{Y},F)$ given by $f\mapsto Tf_{|\tilde{Y}}$.  Without loss of generality, we may assume that $Y = \tilde{Y}$; equivalently, for all $y \in Y$, there exists $f\in C^p(X,E)$ so that $Tf(y) \neq 0$. We call such an operator $T$ {\em nowhere trivial}.

For each compact subset $K$ of $X$ and each  $k \in \N_p$, $\rho_{K,k}: C^p(X,E) \to \R$ given by
\begin{equation}\label{seminorms}
\rho_{K,k}(f) = \sup_{x\in K}\sup_{0\leq j\leq k}||D^jf(x)||
\end{equation}
defines a seminorm on $C^p(X,E)$.   The set of seminorms $\rho_{K,k}$, where $K$ is a compact subset of $X$ and $k \in \N_p$,
generates a compactly determined topology that is stronger than the topology of pointwise convergence.  From now on, endow $C^p(X,E)$ with this topology and $C^q(Y,F)$ with the corresponding topology.  In particular, assuming that $T: C^p(X,E) \to C^q(Y,F)$ is linear, separating and continuous, Proposition \ref{prop8} applies.  In this situation, we identify $i(X)$ with $X$ and consider the support map $h$ as a (continuous) map from $Y$ into $X$.

\begin{prop}\label{prop9}
Let $T: C^p(X,E) \to C^q(Y,F)$ be a separating, nowhere trivial and continuous linear operator and denote by $h$ the support map.
For any $y_0 \in Y$, there exist $r > 0$ and $k_0 \in \N_p$ such that  $B(y_0,r) \subseteq Y$ and for all $y \in B(y_0,r)$, $Tf(y) = 0$  for all $f\in C^p(X,E)$ with $D^mf(h(y)) = 0$ for all $0 \leq m \leq k_0$.
\end{prop}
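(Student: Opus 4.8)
The plan is to extract the order $k_0$ directly from the continuity of $T$, and then to localize with bump functions so that the hypothesis $D^m f(h(y)) = 0$ $(0 \le m \le k_0)$ can actually be exploited. First I would fix $r > 0$ with $\ol{B}(y_0,r) \subseteq Y$ and feed the neighborhood $\{g \in C^q(Y,F): \sup_{y\in \ol B(y_0,r)}\|g(y)\| < 1\}$ of $0$ into the continuity of $T$. Since the topology on $C^p(X,E)$ is generated by the seminorms $\rho_{K,k}$ and a basic neighborhood of $0$ is a finite intersection of sets $\{\rho_{K_i,k_i} < \ep_i\}$, I can pass to a single compact $K$ and a single $k_0 \in \N_p$ (unions and maxima) and obtain $\ep > 0$ with $\rho_{K,k_0}(f) < \ep \Rightarrow \sup_{y \in \ol B(y_0,r)}\|Tf(y)\| < 1$. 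Homogeneity upgrades this to the linear estimate
\[ \sup_{y\in \ol B(y_0,r)}\|Tf(y)\| \le \tfrac{2}{\ep}\,\rho_{K,k_0}(f) \qquad (f \in C^p(X,E)), \]
with $K$, $k_0$, $\ep$ depending only on $y_0$ and $r$. This already singles out the claimed $k_0$ and is uniform over the whole ball; morally it says that $T$ "sees" only derivatives up to order $k_0$.

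The gap to close is that $\rho_{K,k_0}(f)$ measures derivatives of $f$ on all of $K$, whereas we only know $f$ vanishes to order $k_0$ at the single point $x = h(y)$. To bridge this I would localize. For small $\delta > 0$ set $\vp_\delta(z) = \vp((z-x)/\delta)$, a rescaling of the bump function, so that $\vp_\delta = 1$ near $x$, $\supp \vp_\delta \subseteq B(x,\delta)$, and $\|D^i\vp_\delta\|_\infty \le M_i\delta^{-i}$ with $M_i = \sup_G\|D^i\vp\|$. Writing $f = \vp_\delta f + (1-\vp_\delta)f$, the second summand vanishes on a neighborhood of $x$ in $X$, so $h(y) \notin \ol C((1-\vp_\delta)f)$ (using that $i$ embeds $X$ homeomorphically, Proposition \ref{p1}), and Proposition \ref{prop4} gives $T((1-\vp_\delta)f)(y) = 0$. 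Hence $Tf(y) = T(\vp_\delta f)(y)$ for every small $\delta$.

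The heart of the matter, and the step I expect to be most delicate, is to show that $\rho_{K,k_0}(\vp_\delta f) \to 0$ as $\delta \to 0$. Since $\vp_\delta f$ is supported in $B(x,\delta)$, it suffices to bound $\sup_{z\in B(x,\delta)}\|D^j(\vp_\delta f)(z)\|$ for $0 \le j \le k_0$. Expanding by the Leibniz rule and inserting $\|D^i\vp_\delta\|_\infty \le M_i\delta^{-i}$, each term involves $\sup_{z \in B(x,\delta)}\|D^{j-i}f(z)\|$; because $D^m f(x) = 0$ for all $m \le k_0$, Taylor's theorem bounds this by a constant multiple of $\delta^{\,k_0-(j-i)}\sup_{w\in B(x,\delta)}\|D^{k_0}f(w)\|$, where the last supremum tends to $0$ by continuity of $D^{k_0}f$ and $D^{k_0}f(x)=0$. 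The powers of $\delta$ cancel the factors $\delta^{-i}$, leaving for $j \le k_0$ a bound of the form $\delta^{\,k_0-j}\sup_{B(x,\delta)}\|D^{k_0}f\| \to 0$.

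Combining with the displayed estimate yields $\|T(\vp_\delta f)(y)\| \le \frac{2}{\ep}\rho_{K,k_0}(\vp_\delta f) \to 0$, and since $Tf(y) = T(\vp_\delta f)(y)$ for all small $\delta$, we conclude $Tf(y) = 0$. As $K$, $k_0$ and $\ep$ were chosen independently of $y \in B(y_0,r)$, this proves the proposition. The careful bookkeeping in the Leibniz--Taylor estimate is the only real obstacle; the rest is the support property of $h$ (Proposition \ref{prop4}) and the linearity and continuity of $T$.
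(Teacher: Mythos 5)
There is a genuine gap at the very first step. You take the set $\{g \in C^q(Y,F): \sup_{y\in \overline B(y_0,r)}\|g(y)\| < 1\}$ as a neighborhood of $0$ and feed it into the continuity of $T$. But the topology on $C^q(Y,F)$ is generated by the seminorms $\rho_{L,\ell}$ with $L$ a \emph{compact} subset of $Y$, and $Y$ is an open subset of a Banach space $H$ that may be infinite-dimensional; in that case $\overline B(y_0,r)$ is not compact, the functional $g \mapsto \sup_{y\in\overline B(y_0,r)}\|g(y)\|$ need not be a continuous seminorm (it need not even be finite on all of $C^q(Y,F)$), and your set is not a neighborhood of $0$. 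Consequently continuity of $T$ does not deliver the uniform estimate $\sup_{y\in \overline B(y_0,r)}\|Tf(y)\| \le \frac{2}{\ep}\rho_{K,k_0}(f)$. This matters because the uniformity of $k_0$ over a whole ball is precisely the content of the proposition: your argument, run for a single point $y$ (where $\{y\}$ is compact), only produces a $K_y$, $k_y$, $\ep_y$ that may depend on $y$.

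The remainder of your proof --- localizing with $\vp_\delta$, killing $(1-\vp_\delta)f$ via Proposition \ref{prop4}, and the Leibniz--Taylor estimate showing $\rho_{K,k_0}(\vp_\delta f)\to 0$ --- is essentially the paper's own computation and is correct. The paper closes the gap you left open by arguing by contradiction: if no pair $(r,k_0)$ works, there are $y_k \to y_0$ and $f_k$ with $D^mf_k(h(y_k))=0$ for $0\le m \le \min\{k,p\}$ but $Tf_k(y_k)\neq 0$; the set $K=\{y_k: k\in\N\cup\{0\}\}$ \emph{is} compact, so continuity applied to the seminorm $\rho_{K,0}$ on the range yields a single compact $L\subseteq X$, $k_0\in\N_p$ and $\delta>0$, and then your localization estimate applied to $f_{k_0}$ at $x_0=h(y_{k_0})$ forces $Tf_{k_0}(y_{k_0})=0$, a contradiction. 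If you are willing to assume $H$ finite-dimensional, your direct argument goes through after shrinking $r$ so that $\overline B(y_0,r)\subseteq Y$; but the proposition is stated for general $H$, so the contradiction device (or some other way of reducing to a compact subset of $Y$) is needed.
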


\begin{proof}
If the proposition fails, there is a sequence $(y_k)$ in $Y$ converging to $y_0$ and a sequence of functions $(f_k)$ in $C^p(X,E)$ so that $D^mf_k(h(y_k)) = 0$, $0 \leq m \leq \min\{k,p\}$, but $Tf_k(y_k) \neq 0$. The set $K = \{y_k: k\in \N\cup\{0\}\}$ is compact.
By continuity of $T$, there exist a compact subset $L$ of $X$, $k_0 \in \N_p$, and $\delta > 0$ such that $\rho_{L,k_0}(f) \leq \delta$ implies $\rho_{K,0}(Tf) \leq 1$.
Consider the function $f = f_{k_0}\in C^p(X,E)$ and set $x_0 = h(y_{k_0})$.  Let $\vp \in C^p(G)$ be the bump function described above and let $C_i = \sup_{x\in G}\|D^i\vp(x)\|< \infty$ for each $i$. Fix $\ep > 0$.
Choose $\eta >0$ so that
$\eta 2^{k_0}\max_{0 \leq i \leq k_0} C_i \leq \delta\ep$.
Since $D^mf(x_0) = 0$, $0 \leq m \leq k_0$,
\[ \lim_{x\to x_0}\frac{\|D^if(x)\|}{\|x-x_0\|^{k_0-i}} = 0\]
for $0 \leq i \leq k_0$.  Choose $n \in \N$ so that
$\|D^if(x)\| \leq \eta\|x-x_0\|^{k_0-i}$
if $\|x-x_0\| \leq 1/n$ and $0 \leq i \leq k_0$.
Set $g(x) = \vp(n(x-x_0))f(x)$ for $x \in X$.  Then $g \in C^p(X,E)$ and $x_0 \notin \overline{C}(g-f)$.  By Proposition \ref{prop4}, $Tf(y_{k_0}) = Tg(y_{k_0})$.
Note that $g(x) =0$ for all $x \in X$ with $\|x-x_0\|\geq 1/n$.  Thus $D^mg(x) = 0$ for all $x\in X$, $\|x-x_0\|> 1/n$.
On the other hand, for any $x \in X$ with $\|x-x_0\|\leq 1/n$ and any $0 \leq m \leq k_0$,
\begin{align*}
\|D^mg(x)\| &\leq \sum^m_{i=0}n^i\binom{m}{i}\|D^i\vp(n(x-x_0))\|\|D^{m-i}f(x)\|\\
&
\leq \sum^m_{i=0}n^i\binom{m}{i}C_i\|D^{m-i}f(x)\|\\
&\leq \sum^m_{i=0}n^i\binom{m}{i}C_i\eta\|x-x_0\|^{k_0-m+i}\\
&\leq \eta 2^m\max_{0 \leq i \leq m} C_i \leq \delta\ep.
\end{align*}
By choice of $\delta$, it follows that $\rho_{K,0}(Tg) \leq \ep$. In particular, $\|Tf(y_{k_0})\|= \|Tg(y_{k_0})\| \leq \ep$. Hence $Tf(y_{k_0}) = 0$, yielding a contradiction.
\end{proof}

\noindent{\bf Remark}. By considering the seminorm $\rho_{K,j}$ in place of $\rho_{K,0}$, the proof shows that if $D^mf(h(y))=0$ for all $m \in \N_p$, then $D^j(Tf)(y) = 0$ for all $j \in \N_q$.

\bigskip

We now obtain a representation theorem for continuous linear separating maps between spaces of differentiable functions.
For any $e \in G$, let $e^k$ be the element $(e,\dots,e)$ ($k$ components) in $G^k$.

\begin{thm}\label{thm10}
Let $T: C^p(X,E) \to C^q(Y,F)$ be a separating, nowhere trivial and
continuous linear operator and denote by $h$ the support map.  There exist functions $\Phi_k: Y \times
\cS^k(G,E)\to F$, $k \in \N_p$, so that for all
$f \in C^p(X,E)$ and all $y \in Y$,
\begin{equation}\label{eq1.5}
Tf(y) = \sum_k\Phi_k(y,D^kf(h(y))).
\end{equation}
For each $y \in Y$, $\Phi_k(y,\cdot):
\cS^k(G,E) \to F$ is a bounded linear operator and, for each $S \in
\cS^k(G,E)$, $\Phi_k(\cdot,S): Y\to F$ is continuous on $Y$.
Furthermore, for every $y_0 \in Y$,  there exist $r > 0$ and  $k_0\in\N_p$ such that $\Phi_k(y,\cdot) = 0$ for all
$k > k_0$ and all $y \in B(y_0,r)$, and that $\{\Phi_k(y,\cdot): y \in B(y_0,r)\}$ is uniformly bounded for each $k \leq k_0$.
\end{thm}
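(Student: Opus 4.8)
The plan is to build the representation from Proposition~\ref{prop9}, which already gives, for each $y_0$, a radius $r>0$ and an order $k_0$ controlling the vanishing. First I would fix $y_0 \in Y$ and obtain $r, k_0$ from Proposition~\ref{prop9}, so that for $y \in B(y_0,r)$ the value $Tf(y)$ depends only on the jet $\bigl(D^mf(h(y))\bigr)_{0\le m\le k_0}$. The key point is that this dependence is \emph{linear} in the jet: if $f,g \in C^p(X,E)$ satisfy $D^mf(h(y)) = D^mg(h(y))$ for all $0\le m\le k_0$, then $D^m(f-g)(h(y))=0$ for $0\le m\le k_0$, whence $T(f-g)(y)=0$ by Proposition~\ref{prop9}, i.e.\ $Tf(y)=Tg(y)$. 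Thus for each fixed $y$ there is a well-defined map sending the truncated jet $\bigl(D^mf(h(y))\bigr)_{m\le k_0}$ to $Tf(y)$, and by linearity of $T$ this map is linear on $\prod_{m\le k_0}\cS^m(G,E)$. I would then \emph{define} $\Phi_k(y,\cdot)$ on $\cS^k(G,E)$ to be the restriction of this map to the $k$-th coordinate, extended by $0$ for $k>k_0$; the realizability of an arbitrary prescribed jet by some $C^p$ function (via the bump function, as in the proof of Proposition~\ref{prop9}) guarantees each $\Phi_k(y,\cdot)$ is defined on all of $\cS^k(G,E)$, and summing the coordinates recovers \eqref{eq1.5}.

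The next task is consistency across overlapping balls: the definition of $\Phi_k(y,\cdot)$ a priori depends on the choice of $(r,k_0)$ attached to $y_0$, but since for any two admissible orders one dominates the other and the higher-order $\Phi_k$ simply vanish, the maps agree on overlaps, so $\Phi_k$ is globally well-defined on $Y\times\cS^k(G,E)$. I would then verify the stated properties. Boundedness of each $\Phi_k(y,\cdot)$ follows from the quantitative estimate embedded in Proposition~\ref{prop9}: choosing for a given $S\in\cS^k(G,E)$ a test function whose $k$-jet at $h(y)$ is $S$ and whose lower- and higher-order derivatives are controlled (again the rescaled bump construction $g(x)=\vp(n(x-x_0))\cdot(\text{polynomial})$ does this), the inequality $\rho_{K,0}(Tg)\le \ep$ obtained there translates into a norm bound on $\Phi_k(y,S)$ in terms of $\|S\|$, with constants coming from the continuity data $(L,k_0,\delta)$ of $T$; since these constants can be taken uniform over $K=\overline{B(y_0,r)}$, one gets the uniform boundedness of $\{\Phi_k(y,\cdot):y\in B(y_0,r)\}$ for each $k\le k_0$ as well.

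For continuity of $y\mapsto\Phi_k(y,S)$, I would fix $S\in\cS^k(G,E)$ and, near a point $y_1$, exhibit a single function $f_S\in C^p(X,E)$ that \emph{simultaneously} realizes the jet $D^kf_S(h(y))=S$, $D^mf_S(h(y))=0$ for $m\ne k$, $m\le k_0$, for all $y$ in a neighborhood of $y_1$; then $\Phi_k(y,S)=Tf_S(y)$ there, and continuity follows from the continuity of $Tf_S\in C^q(Y,F)$ together with the continuity of $h$. The main obstacle is exactly this last construction: producing one fixed function whose truncated jet at the \emph{moving} base point $h(y)$ equals the prescribed $(0,\dots,0,S,0,\dots,0)$ for all nearby $y$. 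Because $h$ is only continuous, not locally constant, one cannot simply translate a single model; the natural device is to compose $S$ (viewed via a fixed polynomial in the increment $x-h(y)$) with a partition-of-unity or bump-localization argument, but the base point $h(y)$ varying continuously means the polynomial itself varies, so some care is needed to keep $f_S$ a fixed element of $C^p(X,E)$ independent of $y$. I expect to resolve this by instead proving continuity locally where $h$ is understood through Proposition~\ref{prop4}'s support structure, reducing to the case where the jets at distinct $h(y)$ can be realized by a common $f$ on account of their disjoint supports, and then invoking the continuity of $T$ in the seminorm topology to pass to the limit.
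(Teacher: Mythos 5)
Your construction of $\Phi_k$ is essentially the paper's: the explicit test function $f_{y,S}(x)=S[(x-h(y))^k]$ used there has jet $(0,\dots,0,k!S,0,\dots)$ at $h(y)$, so setting $\Phi_k(y,S)=Tf_{y,S}(y)/k!$ is exactly your ``restriction of the jet map to the $k$-th coordinate,'' and your treatment of the representation \eqref{eq1.5}, the local vanishing for $k>k_0$, linearity, and uniform boundedness (via the continuity estimate $\rho_{L,0}(Tf)\le C\rho_{K,k}(f)$ applied to the uniformly controlled family $f_{y,S}$, $y$ in a compact set, $\|S\|\le 1$) all go through as you describe. Realizability of jets needs no bump function at all, since the polynomials $f_{y,S}$ already do the job.

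The genuine gap is in the continuity of $\Phi_k(\cdot,S)$. Your primary plan --- a single fixed $f_S$ whose truncated jet at the moving point $h(y)$ equals $(0,\dots,0,S,0,\dots,0)$ for all $y$ near $y_1$ --- cannot be carried out: the conditions $D^mf_S(h(y))=0$ for $m<k$ at every point of a set with an accumulation point force (by the very concentration argument of Proposition \ref{prop9}) far more vanishing than is compatible with $D^kf_S(h(y))=S\ne 0$, unless $h$ is locally constant, which is precisely not assumed. Your fallback via ``disjoint supports'' also fails, because as $y\to y_1$ the base points $h(y)$ accumulate at $h(y_1)$, so bumps separating them must shrink and their derivatives blow up. The missing idea is to abandon the fixed test function and instead let it move with $y$: one checks directly from the explicit formula for $D^mf_{y,S}$ that $y_n\to y$ implies $f_{y_n,S}\to f_{y,S}$ in the topology of $C^p(X,E)$ (uniform convergence of all derivatives up to the relevant order on compacta, using continuity of $h$); continuity of $T$ then gives $Tf_{y_n,S}\to Tf_{y,S}$ in $C^q(Y,F)$, hence uniformly on the compact set $\{y_n\}\cup\{y\}$, and evaluating at the moving point $y_n$ yields $\Phi_k(y_n,S)=Tf_{y_n,S}(y_n)/k!\to Tf_{y,S}(y)/k!=\Phi_k(y,S)$. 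This two-step estimate (uniform convergence on a compact set containing the moving evaluation points, plus continuity of the limit function $Tf_{y,S}$) is what replaces the untenable ``common $f$'' and is the one ingredient your proposal lacks.
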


\begin{proof}
For any $y \in Y$ and any $S\in \cS^k(G,E)$, $k \in \N_p$,  define the function $f_{y,S}: X \to E$ by $f_{y,S}(x) =
S[(x-h(y))^k]$. (If $k = 0$, we take the right hand side to mean
simply $S$.) Let $\Phi_k: Y \times \cS^k(G,E) \to F$ be given by
$\Phi_k(y,S) = Tf_{y,S}(y)/k!$.
For $y_0 \in Y$,
let $r_1 > 0$ and $k_0\in \N_p$ be given by Proposition \ref{prop9}.
Let $\ol{y} \in B(y_0,r_1)$ and $\ol{x} = h(\ol{y})$. If $k > k_0$, then, for any $S$, $D^mf_{\ol{y},S}(\ol{x}) =
0$, $0 \leq m\leq k_0$. Hence $\Phi_k(\ol{y},S) = 0$ by Proposition \ref{prop9}.
For any $f
\in C^p(X,E)$, consider the function
\[ P(x) = \sum^{k_0}_{k=0}\frac{D^kf(\ol{x})(x-\ol{x})^k}{k!}.\]
Then $P \in C^p(X,E)$ and $D^m(f-P)(\ol{x}) = 0$ for $0 \leq m\leq k_0$.
By choice of $k_0$, we have
\[ Tf(\ol{y}) = TP(\ol{y}) = \sum^{k_0}_{k=0}\Phi_k(\ol{y},D^kf(\ol{x})).\]
This gives rise to the representation (\ref{eq1.5}) since $\Phi_k(\overline{y},\cdot) = 0$ for all $k >k_0$.
The linearity of
$\Phi_k(y,\cdot)$ is clear from the definition and the linearity of
$T$. By direct computation, for any $y \in Y$, $S \in \cS^k(G,E)$
and $x \in X$,
\[ D^mf_{y,S}(x)(a_1,\dots,a_m) = \frac{k!}{(k-m)!}S((x-h(y))^{k-m},a_1,\dots,a_m)\]
for all $a_1,\dots,a_m \in G$ if $m \leq k$, and $D^mf_{y,S} = 0$ if $m > k$.
Let $L$ be a compact subset of $Y$. By continuity of $T$, there exist a compact $K \subseteq X$, $k \in \N_p$ and $C < \infty$ so that $\rho_{L,0}(Tf) \leq C \rho_{K,k}(f)$ for all $f \in C^p(X,E)$. From the expression for $D^mf_{y,S}$, we see that
\[ \sup\{\rho_{K,k}(f_{y,S}):y \in L, S\in \cS^k(G,E), \|S\| \leq 1\} < \infty.\]
It follows that
\[  \sup\{\|Tf_{y,S}(y)\|: y\in L, \|S\|\leq 1\} \leq \sup\{\rho_{L,0}(Tf_{y,S}): y\in L, \|S\|\leq 1\} < \infty.\]
This shows that each $\Phi_k(y,\cdot)$ is a bounded linear operator and that $\{\Phi_k(y,\cdot):y\in L\}$ is uniformly bounded on any compact subset $L$ of $Y$. If there exists $y_0 \in Y$ such that $\{\Phi_k(y,\cdot): y \in B(y_0,r)\}$ is not uniformly bounded for any $r > 0$, then there is a sequence $(y_n)$ converging to $y_0$ so that $(\Phi_k(y_n,\cdot))$ is unbounded. This contradicts the above since the set $\{(y_n)\}\cup\{y_0\}$ is compact in $Y$.
Similarly, if $S$ is bounded, it follows easily that if $(y_n)$
converges to $y$ in $Y$, then $(D^mf_{y_n,S})$ converges uniformly
to $D^mf_{y,S}$ on any compact subset $K$ of $X$ for any $m$.  Thus $(f_{y_n,S})$ converges to
$f_{y,S}$ in $C^p(X,E)$.   Hence $(Tf_{y_n,S})$ converges uniformly to $Tf_{y,S}$ on the compact set $\{(y_n)\}\cup\{y\}$. Therefore, $(\Phi_k(y_n,S))$ converges to
$\Phi_k(y,S)$.  This establishes the continuity of
$\Phi_k(\cdot,S)$.
\end{proof}

Theorem \ref{thm10} is not the last word since there is no mention of differentiability of $h$ or $\Phi_k$.  In the next few sections, we will see that in some cases (depending on $p$ and $q$), weak compactness of $T$ leads to degeneracy (constancy on connected components) of the support map.

\section{Degeneracy of the support map, $q > p$.}

This section and the next two are at the heart of the paper.  From hereon, $T: C^p(X,E) \to C^q(Y,F)$ will always be a separating, nowhere trivial and continuous linear operator, having a representation given by Theorem 10. The support map $h$ is implicitly determined by the operator $T$ via the representation given in Theorem \ref{thm10}.  We want to extract information on $h$ under various assumptions on $T$.  We begin with the simplest case in this section; namely, when $q > p$. Recall that we assume the existence of a bump function $\vp$ with bounded derivatives on the Banach space $G$ containing $X$.  Let $C_j = \sup_{x\in G}\|D^j\vp(x)\|$.

\begin{lem}\label{lem11}
Let $(x_n)$ be a sequence in $X$ converging to a point $x_0 \in X$.
Suppose that $\|x_n - x_0\| = r_n$, and that $0 < 3r_{n+1} < r_n$
for all $n \in \N$. Set $\vp_n(x) = \vp(\frac{2}{r_n}(x-x_n))$.  Let $(f_n)$ be a
sequence in $C^p(X,E)$ and let
\[ \eta_{nk} = \sum^k_{j=0}\binom{k}{j}\frac{C_j}{r^j_n}\sup_{x\in
B(x_n,r_n/2)}\|D^{k-j}f_n(x)\|\]
for all $n\in \N$ and all $k \in \N_p$.
If
\begin{align}
\lim_{n\to\infty}\frac{\eta_{nk}}{r_n}= 0,  &\quad 0\leq k < p, \label{eq1}\\
\intertext{and}
\lim_{n\to\infty}\eta_{np}= 0,  &\quad \text{in case $p < \infty$},
\label{eq2}
\end{align}
then the pointwise sum $f = \sum \vp_nf_n$
belongs to $C^p(X,E)$, and $D^kf(x_0) = 0$, $k \in \N_p$.
\end{lem}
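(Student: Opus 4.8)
The plan is to exploit the separated-supports structure of the series $f = \sum \vp_n f_n$ and reduce everything to estimates on the individual bumps. First I would record the geometry forced by $0 < 3r_{n+1} < r_n$: since $\|x_n - x_0\| = r_n$, the closed balls $\ol{B}(x_n, r_n/2)$, which contain $\supp(\vp_n)$, are pairwise disjoint, none contains $x_0$, and they accumulate only at $x_0$. Hence the family $\{\vp_n f_n\}$ is locally finite on $X \bs \{x_0\}$, where $f$ coincides locally with a single term $\vp_n f_n$ (or with $0$); this already gives $f \in C^p(X\bs\{x_0\}, E)$. The entire problem is therefore to show that $f$ extends $C^p$-smoothly across $x_0$ with all derivatives vanishing there.

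The quantitative engine is the product rule. For each $k \in \N_p$ and each $n$, the Leibniz formula together with $\|D^j\vp_n(x)\| \le (2/r_n)^j C_j$ yields $\sup_x \|D^k(\vp_n f_n)(x)\| \le 2^k \eta_{nk}$, where the relevant supremum is over $\ol{B}(x_n, r_n/2)$. Hypotheses (\ref{eq1}) and (\ref{eq2}) then give, for each fixed $k \in \N_p$, that $\eta_{nk} \to 0$ (since $\eta_{nk} = r_n\cdot(\eta_{nk}/r_n) \to 0$ when $k < p$, while $\eta_{np} \to 0$ is assumed directly when $p < \infty$), and in addition the sharper decay $\eta_{nk}/r_n \to 0$ for $k < p$. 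Because every $x \in B(x_n, r_n/2)$ satisfies $r_n/2 < \|x - x_0\| < 3r_n/2$, these translate into two facts as $x \to x_0$: $D^k f(x) \to 0$ for every $k \in \N_p$, and $\|D^{k}f(x)\| = o(\|x - x_0\|)$ for every $k < p$ (on each ball apply the bound; off all balls $f$ vanishes).

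I would then argue by induction on $m$, $0 \le m \le p$, that $f \in C^m(X,E)$ with $D^k f(x_0) = 0$ for $0 \le k \le m$ and $D^k f(x) \to 0$ as $x \to x_0$. The base case $m=0$ is the continuity estimate above, with $f(x_0) = 0$ since every $\vp_n(x_0) = 0$. For the inductive step the crucial observation is that, by the induction hypothesis, $g := D^{m-1}f$ is a well-defined continuous map with $g(x_0) = 0$, and the sharper estimate at level $k = m-1 < p$ reads exactly $\|g(x) - g(x_0)\| = \|g(x)\| = o(\|x - x_0\|)$. By the definition of the Fr\'echet derivative this says $g$ is differentiable at $x_0$ with $Dg(x_0) = 0$, i.e. $D^m f(x_0) = 0$; continuity of $D^m f$ at $x_0$ is then the separate statement $D^m f(x) \to 0$. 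Combined with smoothness off $x_0$, this promotes $f$ to $C^m(X,E)$ and closes the induction (for $p = \infty$ the induction runs over all finite $m$).

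The main obstacle, and the reason the two hypotheses are split as they are, is to manufacture the derivatives \emph{at} $x_0$. Mere convergence $D^k f(x) \to 0$ only delivers continuity of a derivative once it is known to exist; to produce the derivative at $x_0$ one genuinely needs the little-$o$ rate $\|D^{m-1}f(x)\| = o(\|x-x_0\|)$, which is precisely why (\ref{eq1}) carries the extra factor $1/r_n$. The top order $k = p$ requires only the weaker (\ref{eq2}), since no further differentiation is performed there. The remaining ingredients, namely the Leibniz bound, the local-finiteness of $\{\vp_n f_n\}$, and the comparison $\|x - x_0\| \asymp r_n$ on the $n$-th ball, are routine once the geometry is in place.
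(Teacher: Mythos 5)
Your proposal is correct and follows essentially the same route as the paper's proof: smoothness off $x_0$ from local finiteness of the supports, the Leibniz bound $\|D^k(\vp_nf_n)(x)\|\leq 2^k\eta_{nk}$ on $B(x_n,r_n/2)$, the comparison $\|x-x_0\|\geq r_n/2$ there, and an induction in which the little-$o$ rate from (\ref{eq1}) manufactures the next derivative at $x_0$ while (\ref{eq2}) supplies only the continuity of the top derivative. The only difference is organizational (you isolate the decay estimates before running the induction), not mathematical.
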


\begin{proof}
Since the supports of the functions $\vp_nf_n$ tend toward the point $x_0$, it is clear that $f$ is $C^p$ on $X \bs \{x_0\}$. We first prove by induction that $D^kf(x_0) = 0$ for all $k \in \N_p$.
The case $k = 0$ is trivial.  Assume that the claim has been verified for some $k$, $0 \leq k < p$. If $x \notin \cup B(x_n,r_n/2)$, then $D^kf(x) = 0$.  Suppose that $x\in B(x_n,r_n/2)$ for some $n$.  Then
\begin{align*}
\|D^kf(x) - D^kf(x_0)\| & = \|D^kf(x)\| = \|D^k(\vp_nf_n)(x)\|\\
&\leq \sum^k_{j=0}\binom{k}{j}\|D^j\vp_n(x)\|\, \|D^{k-j}f_n(x)\|\\
&\leq \sum^k_{j=0}\binom{k}{j}\frac{2^jC_j}{r^j_n}\|D^{k-j}f_n(x)\| \leq 2^k\eta_{nk}.
\end{align*}
Also, $\|x-x_0\| \geq \frac{r_n}{2}$ for all $x\in B(x_n,r_n/2)$.  It follows from condition (\ref{eq1}) that $D^{k+1}f(x_0) = 0$. If $p < \infty$, we need to show that $D^pf$ is continuous at $x_0$.  But the same computation shows that
$\|D^pf(x) - D^pf(x_0)\| \leq 2^p\eta_{np}$ if $x \in B(x_n,r_n/2)$ for some $n$ and $0$ otherwise. The continuity of $D^pf$ at $x_0$ follows from condition (\ref{eq2}).
\end{proof}

\begin{thm}\label{thm12}
Assume that $1\leq p < \infty$ and that
at $y_0\in Y$, $\Phi_i(y_0,\cdot) \neq 0$ for some $i \leq p$. For all $q' \in \N_q$, there exist $r > 0$ and $C < \infty$ so that $\|h(y)-h(y_0)\|^{p-i} \leq C\|y-y_0\|^{q'}$ for all $y \in B(y_0,r)$.
\end{thm}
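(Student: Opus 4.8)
The plan is to argue by contradiction, with $q' \in \N_q$ fixed (the case $q' = 0$ being trivial, since $\|h(y)-h(y_0)\|^{p-i}$ stays bounded near $y_0$ by continuity of $h$). Write $x_0 = h(y_0)$ and choose $S_0 \in \cS^i(G,E)$ with $v_0 := i!\,\Phi_i(y_0,S_0) \neq 0$. If the conclusion fails, there is a sequence $(y_n)$ in $Y$ with $\rho_n := \|y_n - y_0\| \to 0$ and $R_n := \|h(y_n) - h(y_0)\|$ satisfying $R_n^{p-i} > n\,\rho_n^{q'}$; continuity of $h$ forces $R_n \to 0$, and one checks $R_n, \rho_n > 0$. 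The crucial point is that the set $L = \{y_0\}\cup\bigcup_n [y_n, y_0]$ is \emph{compact} (the segments $[y_n,y_0]$ shrink to $y_0$), so that continuity of $T$ can be invoked uniformly in $n$: there are a compact $K \subseteq X$ and $C_T < \infty$ with $\rho_{L, q'}(Tf) \le C_T\,\rho_{K, p}(f)$ for every $f$ (we take order $p$ since $\rho_{K,p} \geq \rho_{K,k}$).

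For each $n$ I would use a rescaled, localized copy of the monomial defining $\Phi_i$,
\[ g_n(x) = \vp\Bigl(\tfrac{2}{R_n}(x - x_0)\Bigr)\,S_0[(x-x_0)^i], \]
supported in $\{\|x-x_0\| \le R_n/2\}$. Two features drive the argument. First, $g_n$ coincides with $f_{y_0,S_0}(x) = S_0[(x-x_0)^i]$ on a neighborhood of $x_0$, so $x_0 \notin \overline{C}(g_n - f_{y_0,S_0})$ and Proposition \ref{prop4} gives $Tg_n(y_0) = Tf_{y_0,S_0}(y_0) = v_0$. Second, since $\|h(y_n) - x_0\| = R_n$ exceeds the support radius, $g_n$ vanishes on a neighborhood of $h(y_n)$, whence $D^m g_n(h(y_n)) = 0$ for all $m \in \N_p$; by the Remark following Proposition \ref{prop9} this forces $D^j(Tg_n)(y_n) = 0$ for all $j \in \N_q$, in particular for $0 \le j \le q'$.

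Next I would estimate both sides. Leibniz' rule, the bounds $\sup\|D^j\vp\| = C_j$, the chain-rule factor $(2/R_n)^j$, and $\|x-x_0\| \le R_n/2$ on the support yield $\|D^m g_n(x)\| \lesssim R_n^{i-m}$; the worst order $m = p$ gives $\rho_{K,p}(g_n) \le B\,R_n^{-(p-i)}$ with $B$ independent of $n$. On the other hand, because all derivatives of $Tg_n$ up to order $q'$ vanish at $y_n$, Taylor's theorem along $[y_n, y_0] \subseteq L$ gives
\[ \|v_0\| = \|Tg_n(y_0)\| \le \frac{\rho_n^{q'}}{q'!}\sup_{z \in [y_n,y_0]}\|D^{q'}(Tg_n)(z)\| \le \frac{\rho_n^{q'}}{q'!}\,\rho_{L,q'}(Tg_n). \]
Chaining the continuity estimate with the bound on $\rho_{K,p}(g_n)$ produces $\|v_0\| \le \frac{C_T B}{q'!}\,\rho_n^{q'}\,R_n^{-(p-i)}$, i.e. $R_n^{p-i} \le C\,\rho_n^{q'}$ with $C = C_T B/(q'!\,\|v_0\|)$ independent of $n$. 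For $n > C$ this contradicts $R_n^{p-i} > n\,\rho_n^{q'}$, finishing the proof.

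I expect the main obstacle to be two intertwined issues. The first, and the conceptual heart, is arranging that $Tg_n$ vanishes to order $q'$ at $y_n$: this is exactly what upgrades a crude seminorm bound into the sharp power $\rho_n^{q'}$, and it is where the Remark after Proposition \ref{prop9} is indispensable. The second is securing continuity constants that do not depend on $n$, despite $Y$ lying in a possibly infinite-dimensional $H$ where balls are not compact; this is precisely why the argument must be run sequentially against the compact set $L = \{y_0\}\cup\bigcup_n[y_n,y_0]$ rather than over a whole ball $B(y_0,r)$.
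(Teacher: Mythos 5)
Your proof is correct, but it takes a genuinely different route from the paper's. The paper also argues by contradiction from a sequence $(y_n)\to y_0$ with $\|h(y_n)-h(y_0)\|^{p-i}/\|y_n-y_0\|^{q'}\to\infty$, but it then inserts an intermediate scale $c_n$ (with $c_n/\|h(y_n)-h(y_0)\|^{p-i}\to 0$ and $c_n/\|y_n-y_0\|^{q'}\to\infty$) and builds a \emph{single} test function $f=\sum_n\vp_nf_n$, $f_n(x)=c_nS(x-x_n)^i$, with bumps centered at the points $x_n=h(y_n)$; Lemma \ref{lem11} guarantees $f\in C^p(X,E)$ with $D^jf(x_0)=0$ for all $j$, the Remark after Proposition \ref{prop9} then gives $D^j(Tf)(y_0)=0$ for $j\le q'$, hence $Tf(y)=o(\|y-y_0\|^{q'})$, while $Tf(y_n)=c_n i!\,\Phi_i(y_n,S)$ stays comparable to $c_n$ --- a purely qualitative contradiction requiring no seminorm estimates for $T$. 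You instead use a separate test function $g_n$ for each $n$, centered at $x_0$ rather than at $h(y_n)$, and swap the roles of the two points: $Tg_n$ vanishes to order $q'$ at $y_n$ and is evaluated at $y_0$, and the contradiction is extracted quantitatively from Taylor's theorem on $[y_n,y_0]$ together with a continuity estimate $\rho_{L,q'}(T\cdot)\le C_T\rho_{K,p}(\cdot)$ that is uniform in $n$ precisely because you took $L\supseteq\bigcup_n[y_n,y_0]$ compact --- the right move, and the only way to get an $n$-independent constant when $H$ is infinite-dimensional. Your route avoids the gluing Lemma \ref{lem11} and the auxiliary scale $c_n$ entirely, at the cost of an explicit Leibniz/Taylor computation; the paper's route avoids all quantitative estimates at the cost of the concentration lemma. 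Two small points to tidy up: discard finitely many $n$ so that $[y_n,y_0]\subseteq Y$ and $R_n\le 1$ (the latter is what makes $R_n^{i-m}\le R_n^{-(p-i)}$ for every $m\le p$); and in the degenerate case $i=p$ your construction needs $R_n>0$, which requires reading $\|h(y_n)-h(y_0)\|^{0}$ as $0$ when $h(y_n)=h(y_0)$ --- the same convention the paper's proof tacitly uses when it assumes $r_n=\|x_n-x_0\|>0$.
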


\begin{proof}
Otherwise, there are a sequence $(y_n)$ in $Y\bs\{y_0\}$ converging to $y_0$ and $q' \in \N_q$ so that $(\|h(y_n)-h(y_0)\|^{p-i}/\|y_n-y_0\|^{q'})$ diverges to $\infty$.
Hence it is possible to choose a positive sequence $(c_n)$ so that $c_n/\|h(y_n)-h(y_0)\|^{p-i} \to 0$ and $c_n/\|y_n-y_0\|^{q'}\to \infty$.  Set $x_n = h(y_n)$ and $x_0 = h(y_0)$. By using a subsequence if necessary, we may further assume that $0 < 3r_{n+1} < r_n$, where $r_n = \|x_n-x_0\|$. Let $S \in \cS^i(G,E)$ be such that $\Phi_i(y_0,S) \neq 0$.
Define $f_n(x) = c_nS(x-x_n)^i$ and $\vp_n$ as in Lemma \ref{lem11}. By choice of $c_n$, $f = \sum \vp_nf_n$ lies in $C^p(X,E)$ and $D^jf(x_0) = 0$ for all $j \in \N_p$. Thus $Tf$ is $C^q$ and, {\em a fortiori}, lies in $C^{q'}$. Furthermore, by the Remark following Proposition \ref{prop9}, $D^j(Tf)(y_0) = 0$ for $0 \leq j \leq q'$.  Hence $\lim_{y\to y_0}Tf(y)/\|y-y_0\|^{q'} = 0$. Since $Tf(y_n) = c_ni!\Phi_i(y_n,S)$ and $(\Phi_i(y_n,S))$ converges to $\Phi_i(y_0,S) \neq 0$, we have $c_n/\|y_n-y_0\|^{q'} \to 0$, contrary to the choice of $c_n$.
\end{proof}

\begin{cor}\label{cor13}
Let $T: C^p(X,E)\to C^q(Y,F)$ be a separating, nowhere trivial continuous linear operator. If $q > p$, then there is a partition of $Y$ into clopen subsets $(Y_\al)$ so that the support map is constant on each $Y_\al$.
\end{cor}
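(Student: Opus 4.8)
The plan is to prove that the support map $h$ is locally constant on $Y$; the required partition is then furnished automatically by the connected components of $Y$. Indeed, $Y$ is open in the Banach space $H$ and hence locally connected, so its connected components $(Y_\al)$ are clopen and partition $Y$, and a locally constant map is constant on each connected set. Observe first that the hypothesis $q > p$ forces $p < \infty$ (since $q \le \infty$), so we are in the range $1 \le p < \infty$ where Theorem \ref{thm12} is available. To obtain local constancy I would establish that $h$ is Fr\'echet differentiable at every point of $Y$ with $Dh \equiv 0$, and then integrate this via the mean value inequality.

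Fix $y_0 \in Y$. Since $T$ is nowhere trivial, the representation (\ref{eq1.5}) shows $Tf(y_0)\neq 0$ for some $f$, so there is an index $i \in \N_p$ with $\Phi_i(y_0,\cdot)\neq 0$; choose such an $i$, taking $i < p$ whenever possible. If $i < p$, then $p - i \ge 1$, and I would apply Theorem \ref{thm12} with $q' = p - i + 1$; this is legitimate because $q' \le p + 1 \le q$, so $q'\in\N_q$. The theorem then provides $r>0$ and $C<\infty$ with $\|h(y)-h(y_0)\|^{p-i}\le C\|y-y_0\|^{p-i+1}$ for $y\in B(y_0,r)$, whence
\[ \|h(y)-h(y_0)\| \le C^{1/(p-i)}\,\|y-y_0\|^{1+1/(p-i)}. \]
As the exponent exceeds $1$, this says precisely that $h$ is differentiable at $y_0$ with $Dh(y_0)=0$.

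The remaining case is when the only nonvanishing term at $y_0$ is the top one, i.e.\ $\Phi_i(y_0,\cdot)=0$ for all $i<p$ while $\Phi_p(y_0,\cdot)\neq 0$. Here the exponent $p-i$ in Theorem \ref{thm12} degenerates to $0$ and its inequality carries no information, so I would instead re-run the construction in its proof. Suppose $h$ is not constant near $y_0$; then there is a sequence $y_n\to y_0$ with $h(y_n)\neq h(y_0)$, and after passing to a subsequence we may assume $0<3r_{n+1}<r_n$, where $r_n=\|h(y_n)-h(y_0)\|\to 0$. Pick $S$ with $\Phi_p(y_0,S)\neq 0$ and, for a fixed $q'\in\N_q$ with $q'\ge 1$, a sequence $c_n$ with $c_n\to 0$ but $c_n/\|y_n-y_0\|^{q'}\to\infty$. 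The functions $f_n(x)=c_nS(x-h(y_n))^p$ then satisfy the hypotheses of Lemma \ref{lem11}: the quantities $\eta_{nk}$ are of order $c_nr_n^{p-k}$, and $c_n\to 0$ handles even $k=p$. Thus $f=\sum\vp_nf_n\in C^p(X,E)$ with $D^mf(h(y_0))=0$ for all $m\in\N_p$, so by the Remark following Proposition \ref{prop9} we have $D^j(Tf)(y_0)=0$ for $0\le j\le q'$, giving $Tf(y)/\|y-y_0\|^{q'}\to 0$. But $Tf(y_n)=c_n\,p!\,\Phi_p(y_n,S)$ with $\Phi_p(y_n,S)\to\Phi_p(y_0,S)\neq 0$ forces $c_n/\|y_n-y_0\|^{q'}\to 0$, a contradiction. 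Hence $h\equiv h(y_0)$ near $y_0$, and in particular $Dh(y_0)=0$.

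Combining the two cases yields $Dh\equiv 0$ on $Y$. On any ball $B\subseteq Y$ the mean value inequality for Fr\'echet-differentiable maps then gives $\|h(y)-h(y')\|\le \sup_B\|Dh\|\cdot\|y-y'\|=0$, so $h$ is locally constant, and the partition into components follows as in the first paragraph. I expect the main obstacle to be conceptual rather than computational: Theorem \ref{thm12}, applied at a single base point, only expresses that $h$ is \emph{flat to high order} there, which is strictly weaker than local constancy (a H\"older cusp of exponent $>1$ is flat but not constant). The resolution is to apply the estimate at \emph{every} point to obtain $Dh\equiv 0$ and then integrate; the degenerate top-order case $i=p$, where Theorem \ref{thm12} must be supplemented by a direct appeal to the construction underlying its proof, is the secondary technical point to handle with care.
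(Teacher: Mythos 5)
Your proof is correct and follows essentially the same route as the paper: both deduce $Dh\equiv 0$ at every point from Theorem \ref{thm12} (the paper fixes a single $q'>p$ so that the resulting exponent $q'/(p-i)$ exceeds $1$ for every $i$, rather than taking $q'=p-i+1$) and then integrate to get local constancy and the clopen partition by components. Your separate treatment of the degenerate case $i=p$ is a sensible extra precaution, since the paper's displayed bound $\|y-y_0\|^{q'/(p-i)-1}$ is meaningless when $p-i=0$; there the correct reading of Theorem \ref{thm12} is that $h$ is already locally constant at $y_0$, which is precisely what your re-run of its construction establishes.
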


\begin{proof}
Choose $q' \in \N_q$ so that $q' > p$. Since $T$ is nowhere trivial, for any $y_0\in Y$, there exists $i \in \N_p$ so that $\Phi_i(y_0,\cdot)\neq 0$. Observe that $q' > p-i$. By Theorem \ref{thm12},
\[ \limsup_{y\to y_0}\frac{\|h(y)-h(y_0)\|}{\|y-y_0\|} \leq C\lim_{y\to y_0}\|y-y_0\|^{\frac{q'}{p-i} - 1} = 0.\]
Thus $Dh(y) = 0$ for all $y \in Y$ and the conclusion of the corollary follows.
\end{proof}

\section{Degeneracy of the support map, $p = q = \infty$}

Certainly, for different arrangements of $p$ and $q$, and without additional assumptions, degeneracy of the support map will not result. Recall that a set $W$ in $C^q(Y,F)$ is said to be {\em bounded} if it is absorbed by every open neighborhood of $0$; equivalently, if each seminorm $\rho_{L,\ell}$ (see (\ref{seminorms})) is bounded on $W$.
In this section, we show that when $p = q = \infty$ and $T$ maps some open set onto a bounded set, then the support map is degenerate.  The condition on $T$ is satisfied in particular when it is compact or weakly compact.

\begin{lem}\label{lem14}
For any $\ell \in \N$, there exists $c< \infty$ so that given $a \in X$, $0 < \ep < 1$ with $B(a,\ep) \subseteq X$ and $f\in C^\infty(X,E)$ with $D^jf(a) = 0$, $0\leq j \leq \ell$, there exists $g \in C^\infty(X,E)$ such that $g = f$ on $B(a,\ep)$ and
\[ \max_{0\leq j\leq \ell}\sup_{x\in X}\|D^jg(x)\| \leq c \sup_{x\in B(a,2\ep)}\|D^\ell f(x)\|.\]
\end{lem}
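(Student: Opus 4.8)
The plan is to produce $g$ by multiplying $f$ with a rescaled copy of the fixed bump function $\vp$. I would set $\psi(x) = \vp\bigl(\tfrac{x-a}{2\ep}\bigr)$ and define $g = \psi f$. Since $\vp \equiv 1$ on $\{\|z\|\le 1/2\}$, we have $\psi\equiv 1$ on $B(a,\ep)$, so $g=f$ there; since $\vp\equiv 0$ on $\{\|z\|\ge 1\}$, both $\psi$ and all its derivatives vanish outside $B(a,2\ep)$, so $g$ is a well-defined element of $C^\infty(X,E)$ that is $0$ off $B(a,2\ep)$. Consequently $\sup_{x\in X}\|D^jg(x)\| = \sup_{x\in B(a,2\ep)}\|D^jg(x)\|$ for each $j$, and everything reduces to estimating the derivatives of $\psi f$ on $B(a,2\ep)$ (which I take to lie in $X$, as the right-hand side of the asserted inequality presupposes).

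The crucial ingredient is a Taylor-type estimate exploiting the vanishing $D^jf(a)=0$, $0\le j\le\ell$. Writing $M=\sup_{x\in B(a,2\ep)}\|D^\ell f(x)\|$, I claim that
\[ \|D^jf(x)\| \le \frac{M}{(\ell-j)!}\,\|x-a\|^{\ell-j}\qquad(0\le j\le\ell,\ x\in B(a,2\ep)).\]
This follows by downward induction on $j$, starting from $j=\ell$ (where it is immediate) and ending at $j=0$: granting the bound for $j+1$, the identity $D^jf(x)=D^jf(x)-D^jf(a)=\int_0^1 D^{j+1}f(a+t(x-a))[x-a]\,dt$, valid because the segment $[a,x]$ lies in the convex set $B(a,2\ep)\subseteq X$, gives $\|D^jf(x)\|\le\int_0^1\frac{M}{(\ell-j-1)!}(t\|x-a\|)^{\ell-j-1}\|x-a\|\,dt=\frac{M}{(\ell-j)!}\|x-a\|^{\ell-j}$.

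Finally I would combine this with the Leibniz rule exactly as in the proof of Proposition \ref{prop9}. With $C_m=\sup_{x\in G}\|D^m\vp(x)\|$ one has $\|D^m\psi(x)\|\le (2\ep)^{-m}C_m$, and for $0\le k\le\ell$ and $x\in B(a,2\ep)$,
\[ \|D^kg(x)\| \le \sum_{m=0}^k\binom{k}{m}\|D^m\psi(x)\|\,\|D^{k-m}f(x)\| \le M\sum_{m=0}^k\binom{k}{m}\frac{C_m}{(2\ep)^m}\,\frac{\|x-a\|^{\ell-k+m}}{(\ell-k+m)!}.\]
Since $\|x-a\|\le 2\ep$ and $\ell-k+m\ge 0$, each factor $\|x-a\|^{\ell-k+m}/(2\ep)^m$ is at most $(2\ep)^{\ell-k}\le 2^\ell$ (using $0<\ep<1$); hence $\|D^kg(x)\|\le c\,M$ with $c=2^\ell\max_{0\le k\le\ell}\sum_{m=0}^k\binom{k}{m}C_m/(\ell-k+m)!$, a constant depending only on $\ell$ and the fixed bump function $\vp$. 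Taking the supremum over $x$ and the maximum over $k\le\ell$ yields the lemma.

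The main obstacle — and the reason $c$ can be chosen independently of $a$, $\ep$ and $f$ — is precisely the interplay in the last display between the $\ep^{-m}$ growth of the cutoff's derivatives and the $\|x-a\|^{\ell-k+m}\le(2\ep)^{\ell-k+m}$ decay furnished by the Taylor estimate; the normalization $\ep<1$ is what keeps the leftover power $(2\ep)^{\ell-k}$ from blowing up. A secondary technical point is the domain issue: the integral representation in the Taylor step requires the segments $[a,x]$ for $x\in B(a,2\ep)$ to remain in $X$, so the estimate is genuinely carried out on $B(a,2\ep)\cap X$ and is cleanest when $B(a,2\ep)\subseteq X$, consistent with the form of the right-hand side.
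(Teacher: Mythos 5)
Your proof is correct and follows essentially the same route as the paper's: multiply $f$ by the rescaled bump $\vp\bigl(\tfrac{x-a}{2\ep}\bigr)$, use the vanishing of $D^jf(a)$ for $j\le\ell$ to get the Taylor bound $\|D^jf(x)\|\le \frac{M}{(\ell-j)!}\|x-a\|^{\ell-j}$ on $B(a,2\ep)$, and then apply the Leibniz rule, balancing the $\ep^{-m}$ growth of the cutoff's derivatives against the $\|x-a\|^{\ell-k+m}$ decay. The only cosmetic difference is that you derive the Taylor estimate by downward induction via the fundamental theorem of calculus where the paper invokes Taylor's theorem with integral remainder directly.
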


\begin{proof}
By translation, we may assume that $a = 0$. As usual, let $\vp$ denote the assumed bump function and set $c = 2^\ell\max_{0\leq k\leq \ell}\sup_{x\in G}\|D^k\vp(x)\|$. Given $\ep > 0$ with $B(0,\ep)\subseteq X$, consider the function $g: X \to E$ defined by $g(x) = \vp((2\ep)^{-1}x)f(x)$. Obviously, $g \in C^\infty(X,E)$ and $g = f$ on $B(0, \ep)$. Since $g(x) = 0$ when $\|x\| > 2\ep$, $D^jg(x) = 0$ for $\|x\| > 2\ep$ and any $j \in \N\cup\{0\}$. If $x\in X$ and $\|x\| \leq 2\ep$, by Taylor's Theorem (see, e.g., \cite{L}) and using the fact that $D^jf(0) = 0$, $0 \leq j \leq \ell$, we have, for $0 \leq k \leq \ell$,
\begin{align*}
\|D^kf(x)\| &\leq \int^1_0\frac{(1-t)^{\ell-k-1}}{(\ell-k-1)!}\|D^\ell f(tx)\|\|x\|^{\ell-k}\,dt \\
&\leq \sup_{z\in B(0,2\ep)}\|D^\ell f(z)\|\frac{(2\ep)^{\ell-k}}{(\ell-k)!}.
\end{align*}
Thus, for $0\leq j \leq \ell$,
\begin{align*}
\|D^jg(x)\| &\leq \sum^j_{k=0}\binom{j}{k}\bigl(\frac{1}{2\ep}\bigr)^{j-k}\|D^{j-k}\vp\bigl(\frac{x}{2\ep}\bigr)\|\,\|D^kf(x)\|\\
& \leq \sum^j_{k=0}\binom{j}{k}(2\ep)^{\ell-j}\|D^{j-k}\vp\bigl(\frac{x}{2\ep}\bigr)\|\sup_{z\in B(0,2\ep)}\|D^\ell f(z)\|\\
&\leq c\sup_{z\in B(0,2\ep)}\|D^\ell f(z)\|.
\end{align*}
\end{proof}

For the remainder of the section, fix a separating, nowhere trivial linear operator $T: C^\infty(X,E) \to C^\infty(Y,F)$ which maps some open set in $C^\infty(X,E)$ onto a bounded set in $C^\infty(Y,F)$. In particular, $T$ is continuous. Given $y_0\in Y$, we may find by applying Theorem \ref{thm10} $k_0 \in \N$ and $r> 0$ so that
\[ Tf(y) = \sum^{k_0}_{k=0}\Phi_k(y,D^kf(h(y))),\ y \in B(y_0,r)\subseteq Y, \ f\in C^\infty(X,E).\]
Moreover, since $T$ maps an open set onto a bounded set, there are a compact set $L \subseteq X$ and $\ell \in \N$ such that for all $m \in \N\cup\{0\}$ and every compact $K \subseteq Y$, there exists $C = C(K,m) < \infty$ so that
\begin{equation}\label{eq4}
\max_{y\in K}\|D^m(Tf)(y)\| \leq C \max_{0\leq j \leq \ell}\max_{x\in L}\|D^jf(x)\|.
\end{equation}
We may assume without loss of generality that $\ell \geq k_0$.

\begin{prop}\label{prop15}
If $f \in C^\infty(X,E)$ and $D^jf(h(y)) = 0$, $0 \leq j \leq \ell$, for some $y \in B(y_0,r)$ then $D^m(Tf)(y) = 0$, $m \in\N\cup\{0\}$.
\end{prop}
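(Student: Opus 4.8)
The plan is to localize $f$ near the point $a := h(y)$, replace it by a globally controlled function via Lemma \ref{lem14}, and then shrink the localization to $a$. Fix $m \in \N \cup \{0\}$. For every $\ep > 0$ small enough that $\ep < 1$ and $B(a,2\ep) \subseteq X$, apply Lemma \ref{lem14} (with center $a$, radius $\ep$, and the given $f$, which satisfies $D^jf(a) = 0$ for $0 \leq j \leq \ell$) to obtain $g_\ep \in C^\infty(X,E)$ with $g_\ep = f$ on $B(a,\ep)$ and
\[ \max_{0\leq j\leq \ell}\sup_{x\in X}\|D^jg_\ep(x)\| \leq c\sup_{x\in B(a,2\ep)}\|D^\ell f(x)\|. \]

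The crux is to pass from $f$ to $g_\ep$ without changing any derivative of $Tf$ at $y$. Set $u = f - g_\ep$. Since $g_\ep = f$ on the open ball $B(a,\ep) \ni a$, the function $u$ vanishes identically on a neighborhood of $a = h(y)$, whence $D^nu(h(y)) = 0$ for all $n \in \N \cup \{0\}$. Because $p = q = \infty$, the Remark following Proposition \ref{prop9} applies and yields $D^n(Tu)(y) = 0$ for all $n$. In particular $D^m(Tf)(y) = D^m(Tg_\ep)(y)$.

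It remains to estimate $D^m(Tg_\ep)(y)$ and let $\ep \to 0$. Applying (\ref{eq4}) with the compact set $K = \{y\}$ to the function $g_\ep$, and then invoking the bound from Lemma \ref{lem14} together with $L \subseteq X$, we obtain
\[ \|D^m(Tg_\ep)(y)\| \leq C(\{y\},m)\max_{0\leq j\leq \ell}\max_{x\in L}\|D^jg_\ep(x)\| \leq C(\{y\},m)\,c\sup_{x\in B(a,2\ep)}\|D^\ell f(x)\|. \]
Since $D^\ell f$ is continuous and $D^\ell f(a) = 0$, the right-hand side tends to $0$ as $\ep \to 0$. Combining with the previous paragraph, $\|D^m(Tf)(y)\| = \|D^m(Tg_\ep)(y)\|$ is dominated by a quantity that is independent of the fixed equality yet goes to $0$ as $\ep \to 0$, forcing $D^m(Tf)(y) = 0$. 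As $m$ was arbitrary, the proposition follows.

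The main point — and the only delicate step — is the bridge $D^m(Tf)(y) = D^m(Tg_\ep)(y)$: the hypothesis only provides vanishing of $f$ to the \emph{finite} order $\ell$ at $h(y)$, whereas the Remark after Proposition \ref{prop9} requires vanishing of \emph{all} derivatives. This is resolved precisely by the cut-off $g_\ep$, which agrees with $f$ near $a$ (so their difference vanishes to infinite order there, activating the Remark) while Lemma \ref{lem14} simultaneously controls all derivatives of $g_\ep$ up to order $\ell$ globally in terms of $\sup_{B(a,2\ep)}\|D^\ell f\|$ — exactly the quantity that (\ref{eq4}) consumes and that continuity of $D^\ell f$ drives to $0$. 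Note that the representation constant $k_0$ plays no direct role here; the argument rests entirely on Lemma \ref{lem14}, the global boundedness estimate (\ref{eq4}), and the infinite-order version of degeneracy recorded in the Remark.
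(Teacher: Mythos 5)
Your proof is correct and follows essentially the same route as the paper: localize $f$ near $h(y)$ via Lemma \ref{lem14}, apply the global bound (\ref{eq4}) at the singleton $\{y\}$, and let the localization radius shrink using continuity of $D^\ell f$ together with $D^\ell f(h(y))=0$. The only (harmless) difference is how you justify $D^m(Tf)(y)=D^m(Tg_\ep)(y)$: the paper observes that the local representation from Theorem \ref{thm10} forces $Tf=Tg_\ep$ on a whole neighborhood of $y$, whereas you apply the Remark after Proposition \ref{prop9} to the difference $f-g_\ep$, which vanishes to infinite order at $h(y)$; both are valid.
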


\begin{proof}
By Lemma \ref{lem14}, for any $\delta >0$, there exists $g \in C^\infty(X,E)$ so that $g = f$ on a neighborhood of $h(y)$ and
$\max_{0\leq j\leq \ell}\sup_{x\in X}\|D^jg(x)\| \leq \delta$. By the representation of $T$, $Tg = Tf$ on a neighborhood of $y$.
From (\ref{eq4}), we deduce that $\|D^m(Tf)(y)\| = \|D^m(Tg)(y)\|\leq C(\{y\},m)\delta$. The result follows since $\delta > 0$ is arbitrary.
\end{proof}

Denote $h(y_0)$ by $a$ and assume that $B(a,s) \subseteq X$ for some $0 < s < 1$.

\begin{prop}\label{prop16}
Let $K$ be a compact convex set in $B(y_0,r)$ containing the point $y_0$.
Suppose that $0 \leq j \leq k_0$, $S \in \cS^{j}(G,E)$ and $r_1, r_2,r_3, r_4$ are nonnegative integers so that $r_1 + r_3 = \ell+1$, $r_3+r_4 = j$ and $\al = r_2 + r_4 \leq k_0$. If $z\in K$, $b = h(z)\in B(a,s)$, and $x^*$ is a norm $1$ functional in the dual space $G^*$ of $G$ such that $x^*(b-a) = \|b-a\|$, define $f: X\to E$ by
\[ f(x) = (x^*(x-a))^{r_1}(x^*(x-b))^{r_2}S(x-a)^{r_3}(x-b)^{r_4}.
\]
Then there exists a finite constant $D = D(K)$ such that
\[ \|Tf(z)\| \leq D\|S\|\,\|b-a\|^{\al+1}\|z-y_0\|^{\ell+1}.\]
\end{prop}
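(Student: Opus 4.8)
The plan is to replace $f$ by a smooth truncation concentrated near $b$ that still vanishes to order $\ell+1$ at $a=h(y_0)$; then Proposition~\ref{prop15} forces the image to vanish to high order at $y_0$, a Taylor estimate supplies the factor $\|z-y_0\|^{\ell+1}$, and the concentration supplies the factor $\|b-a\|^{\alpha+1}$. I first record two facts about $f$. Writing $u(x)=x^*(x-a)$, $v(x)=x^*(x-b)$ and $w(x)=S(x-a)^{r_3}(x-b)^{r_4}$, we have $f=u^{r_1}v^{r_2}w$, a polynomial of total degree $r_1+r_2+r_3+r_4=(\ell+1)+\alpha$. Since $u$ vanishes at $a$ to order $r_1$ and $w$ to order $r_3$, while $v(a)\neq 0$, the function $f$ vanishes at $a$ to order $r_1+r_3=\ell+1$; in particular $D^mf(a)=0$ for $0\le m\le\ell$. (The norming property $x^*(b-a)=\|b-a\|$ is not needed for the upper bound we seek.)

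The technical heart is the estimate
\[ \|D^Mf(x)\|\le c_1\|S\|\,\|b-a\|^{(\ell+1+\alpha)-M}\qquad(x\in B(b,\|b-a\|),\ 0\le M\le\ell), \]
with $c_1$ depending only on $\ell$ and the $r_i$. To obtain it I expand $D^M(u^{r_1}v^{r_2}w)$ by the Leibniz rule. On $B(b,\|b-a\|)$ one has $|u|,|v|,\|x-a\|,\|x-b\|\le 2\|b-a\|$, while $Du=Dv=x^*$ has norm $1$. Differentiating $u^{r_1}$, $v^{r_2}$, $w$ respectively $m_1,m_2,m_3$ times ($m_1+m_2+m_3=M$) bounds the corresponding term by a constant times $\|b-a\|^{r_1-m_1}\cdot\|b-a\|^{r_2-m_2}\cdot\|S\|\,\|b-a\|^{r_3+r_4-m_3}$ (for the last factor, each of the remaining $r_3+r_4-m_3$ arguments of $S$ has norm $\le 2\|b-a\|$); the total exponent is exactly $(\ell+1+\alpha)-M$. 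Summing the finitely many terms gives the claim.

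Now set $g(x)=\vp\bigl(\tfrac{x-b}{\|b-a\|}\bigr)f(x)$, an element of $C^\infty(X,E)$ (the bump $\vp$ is defined on all of $G$, so no inclusion $B(b,\|b-a\|)\subseteq X$ is required). Three properties hold. First, $\vp\equiv 1$ on a neighborhood of $0$, so $g=f$ near $b=h(z)$; since the representation in Theorem~\ref{thm10} depends only on the derivatives of the argument at $h(z)$, this gives $Tg(z)=Tf(z)$. Second, because $f$ vanishes to order $\ell+1$ at $a$, so does $g$, whence $D^jg(h(y_0))=0$ for $0\le j\le\ell$ and Proposition~\ref{prop15} yields $D^m(Tg)(y_0)=0$ for all $m$. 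Third, combining the core estimate with $\|D^k\vp(\tfrac{\cdot-b}{\|b-a\|})\|\le C_k\|b-a\|^{-k}$ (this is precisely where the truncation radius $\|b-a\|$ is used: the factors $\|b-a\|^{-k}$ are absorbed by the gain in the core estimate), and using $\|b-a\|<s<1$ with $M\le\ell$, one obtains
\[ \max_{0\le M\le\ell}\ \sup_{x\in X}\|D^Mg(x)\|\le c_2\|S\|\,\|b-a\|^{\alpha+1}. \]

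To finish, apply Taylor's theorem to $Tg$ along the segment $[y_0,z]\subseteq K$, which is legitimate since $K$ is convex and $Tg\in C^\infty(Y,F)$. As $D^m(Tg)(y_0)=0$ for $0\le m\le\ell$,
\[ \|Tf(z)\|=\|Tg(z)\|\le\frac{\|z-y_0\|^{\ell+1}}{(\ell+1)!}\sup_{w\in[y_0,z]}\|D^{\ell+1}(Tg)(w)\|. \]
By inequality~(\ref{eq4}) with $m=\ell+1$ and the compact set $K$, the supremum is at most $C(K,\ell+1)\max_{0\le M\le\ell}\sup_{x\in L}\|D^Mg(x)\|\le C(K,\ell+1)\,c_2\|S\|\,\|b-a\|^{\alpha+1}$, so $D=D(K)=C(K,\ell+1)c_2/(\ell+1)!$ proves the proposition (the degenerate case $b=a$ is trivial, as then both sides vanish). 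The main obstacle is the bookkeeping in the core estimate — verifying that after $M$ differentiations every surviving monomial retains total $\|b-a\|$-degree exactly $(\ell+1+\alpha)-M$ — together with the realization that the single truncation radius $\|b-a\|$ simultaneously preserves the germ at $b$, the vanishing at $a$, and the smallness of the global derivatives.
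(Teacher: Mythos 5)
Your proof is correct, and it follows the same overall strategy as the paper's: replace $f$ by a globally controlled truncation $g$ that agrees with $f$ near $b=h(z)$ and vanishes to order $\ell$ at $a$, use Proposition \ref{prop15} to get $D^m(Tg)(y_0)=0$, and then combine Taylor's theorem along $[y_0,z]\subseteq K$ with the bound (\ref{eq4}). The one substantive divergence is in how the truncation is built. The paper centres the bump at $a$ with radius $\ep$ slightly larger than $\|b-a\|$ and invokes Lemma \ref{lem14}, which upgrades the single estimate $\sup_{B(a,2\ep)}\|D^\ell f\|\lesssim \|S\|\ep^{\al+1}$ to a bound on all derivatives of $g$ up to order $\ell$ (via Taylor's theorem for $f$ at $a$), and then lets $\ep\downarrow\|b-a\|$. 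You instead centre the bump at $b$ with radius exactly $\|b-a\|$ and estimate every $D^Mf$, $0\le M\le\ell$, directly by Leibniz on $B(b,\|b-a\|)$, landing on the exponent $\al+1$ from $\ell+1+\al-M\ge\al+1$ together with $\|b-a\|<1$. This dispenses with Lemma \ref{lem14} and with the limiting argument in $\ep$, at the cost of a somewhat heavier Leibniz bookkeeping; both routes produce a constant $D$ depending only on $K$, $\ell$ and the bump function. The two delicate points you flag explicitly --- that $D^jg(a)=0$ for $j\le\ell$ even though $a$ sits on the boundary of the bump's support, and the degenerate case $b=a$ --- are both handled correctly.
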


\begin{proof}
Since $r_1 + r_3 > \ell$, $D^jf(a) = 0$, $0 \leq j \leq \ell$. Consider any $\ep$ so that $\|b-a\| < \ep < s$. By Lemma \ref{lem14}, there exists $g \in C^\infty(X,E)$ such that $g = f$ on $B(a,\ep)$ and
\[ \max_{0\leq j\leq \ell}\sup_{x\in X}\|D^jg(x)\| \leq c \sup_{x\in B(a,2\ep)}\|D^\ell f(x)\|.\]
Now, for $x\in B(a,2\ep)$,
\begin{multline*}
\|D^\ell f(x)\| \leq \ell!\sum\prod^4_{i=1}\binom{r_i}{s_i}\|x-a\|^{r_1-s_1}\|x-b\|^{r_2-s_2}\cdot
\\\|S\|\|\|x-a\|^{r_3-s_3}\|x-b\|^{r_4-s_4},
\end{multline*}
where the sum is taken over all integers $0 \leq s_i\leq r_i$, $1\leq i \leq 4$, with sum $\ell$. Using the estimates $\|x-a\|, \|x-b\|< 3\ep$ and
\[\sum\prod^4_{i=1}\binom{r_i}{s_i} \leq \prod^4_{i=1}\sum^{r_i}_{s_i = 0}\binom{r_i}{s_i} \leq 2^{\ell+1+\al}\leq  2^{2\ell+1},\]
we see that
\[ \|D^\ell f(x)\| \leq \ell!2^{2\ell+1}\|S\|(3\ep)^{\al+1}.\]
Take $C = C(K,\ell+1)$. From (\ref{eq4}) and the above, we obtain
\begin{align*}
\max_{y\in K}\|D^{\ell+1}(Tg)(y)\| &\leq C\max_{0\leq j\leq \ell}\max_{x\in L}\|D^jg(x)\| \\&
\leq Cc \sup_{x\in B(a,2\ep)}\|D^\ell f(x)\|\\
&\leq Cc\,\ell!2^{2\ell+1}3^{\ell+1}\|S\|\ep^{\al+1}.
\end{align*}
By Proposition \ref{prop15}, $D^m(Tg)(y_0) = 0$, $m \in \N\cup\{0\}$. Since $K$ is convex, by Taylor's Theorem,
\begin{multline*}
\|Tg(z)\| \leq \max_{y\in K}\|D^{\ell+1}(Tg)(y)\|\frac{\|z-y_0\|^{\ell+1}}{(\ell+1)!}\leq\\ \leq Cc\,2^{2\ell+1}3^{\ell+1}\|S\|\ep^{\al+1}\|z-y_0\|^{\ell+1}.
\end{multline*}
Observe that $Tf(z) = Tg(z)$ as $f = g$ on a neighborhood of $h(z) = b$. Since we may make $\ep$ as close to $\|b-a\|$ as we please, the result follows.
\end{proof}

\begin{prop}\label{prop17}
In the notation of Proposition \ref{prop16}, let
$W$ be the operator in $\cS^{\al}(G,E)$ determined by
\[ Ww^{\al} = (x^*(b-a))^{r_1}(x^*(w))^{r_2}S(b-a)^{r_3}w^{r_4}.\]
Then
\[ \al!\|\Phi_\al(z, W)\| \leq 2^{(k_0-\al)(\ell+2)} D\|S\|\,\|b-a\|^{\al +1}\|z-y_0\|^{\ell+1}.\]
\end{prop}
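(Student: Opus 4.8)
The plan is to argue by downward induction on $\al$, from $\al=k_0$ to $\al=0$, using the representation $Tf(z)=\sum_{k=0}^{k_0}\Phi_k(z,D^kf(b))$ on $B(y_0,r)$ (Theorem \ref{thm10}), where $b=h(z)$ and $f$ is the function of Proposition \ref{prop16}. The first task is to compute the derivatives $D^kf(b)$. Writing $u=x-b$, so that $x-a=u+(b-a)$, $x^*(x-b)=x^*(u)$ and $x^*(b-a)=\|b-a\|$, one expands
\[
f(x)=(x^*(u)+\|b-a\|)^{r_1}(x^*(u))^{r_2}\,S((b-a)+u)^{r_3}u^{r_4}
\]
into homogeneous pieces in $u$, via the binomial theorem in the first factor and multilinearity and symmetry of $S$ in the third. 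Every term has degree $\geq r_2+r_4=\al$, so the degree-$k$ piece, which equals $\tfrac1{k!}D^kf(b)u^k$, exists only for $k\geq\al$ and is $\sum_{p+q=k-\al}\binom{r_1}{p}\binom{r_3}{q}V_{p,q}u^k$, where
\[
V_{p,q}w^k=(x^*(b-a))^{r_1-p}(x^*(w))^{r_2+p}\,S(b-a)^{r_3-q}w^{r_4+q}.
\]
Thus $D^kf(b)=0$ for $k<\al$, while for $k=\al$ only the $p=q=0$ term survives, giving $D^\al f(b)=\al!\,W$. Consequently $\al!\,\Phi_\al(z,W)=Tf(z)-\sum_{k=\al+1}^{k_0}\Phi_k(z,D^kf(b))$.

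For the base case $\al=k_0$ the sum is empty and Proposition \ref{prop16} gives the bound directly, with constant $2^0=1$. For the inductive step I would bound each term $\Phi_k(z,D^kf(b))$ with $\al<k\leq k_0$ by the inductive hypothesis at level $k$. The key device is a rescaling: when $b\neq a$, each $V_{p,q}$ equals $\|b-a\|^{-(k-\al)}\widetilde W_{p,q}$, where $\widetilde W_{p,q}$ is the operator of the same shape as $W$, with the same $S$ and parameters $(\tilde r_1,\tilde r_2,\tilde r_3,\tilde r_4)=\bigl((r_1-p)+(k-\al),\,r_2+p,\,r_3-q,\,r_4+q\bigr)$. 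These satisfy $\tilde r_1+\tilde r_3=\ell+1$, $\tilde r_3+\tilde r_4=j$ and $\tilde r_2+\tilde r_4=k\leq k_0$, so Proposition \ref{prop17} applies to $\widetilde W_{p,q}$ at level $k$. Since $\Phi_k(z,\cdot)$ is linear and $\sum_{p+q=k-\al}\binom{r_1}{p}\binom{r_3}{q}=\binom{\ell+1}{k-\al}\leq 2^{\ell+1}$ by Vandermonde, this yields
\[
\|\Phi_k(z,D^kf(b))\|\leq 2^{\ell+1}\,2^{(k_0-k)(\ell+2)}D\|S\|\,\|b-a\|^{\al+1}\|z-y_0\|^{\ell+1}.
\]
The case $b=a$ is trivial: since $r_3\leq j\leq k_0\leq\ell$, we have $r_1\geq 1$, so $W=0$ and both sides of the desired inequality vanish.

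Combining this with the estimate $\|Tf(z)\|\leq D\|S\|\,\|b-a\|^{\al+1}\|z-y_0\|^{\ell+1}$ of Proposition \ref{prop16}, summing over $\al<k\leq k_0$, and reindexing by $m=k_0-k$ reduces everything to the numerical inequality
\[
1+2^{\ell+1}\sum_{m=0}^{k_0-\al-1}2^{m(\ell+2)}\leq 2^{(k_0-\al)(\ell+2)},
\]
which follows from summing the geometric series together with the elementary bound $2^{\ell+1}/(2^{\ell+2}-1)\leq 1$. I expect the main obstacle to lie in the bookkeeping of the inductive step: verifying that the padded parameters $\tilde r_i$ are nonnegative and meet all three constraints of Propositions \ref{prop16} and \ref{prop17}, so that the inductive hypothesis genuinely applies, and confirming that the constant $D=D(K)$ produced by Proposition \ref{prop16} is independent of the exponents $r_i$ and of $S$, so that the same $D$ can be reused at every level of the induction.
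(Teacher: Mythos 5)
Your proposal is correct and follows essentially the same route as the paper's proof: downward induction on $\al$ from $k_0$, the identical decomposition $D^kf(b)=k!\sum\binom{r_1}{s_1}\binom{r_3}{s_3}W_{s_1,s_3}$ (your $V_{p,q}$), the same rescaling by $(x^*(b-a))^{k-\al}=\|b-a\|^{k-\al}$ to bring each term back under the inductive hypothesis at level $k$, and the same geometric-series estimate yielding the constant $2^{(k_0-\al)(\ell+2)}$. The only (harmless) differences are your explicit treatment of the degenerate case $b=a$ and the use of Vandermonde in place of the cruder bound $\sum\binom{r_1}{s_1}\binom{r_3}{s_3}\leq 2^{r_1+r_3}=2^{\ell+1}$.
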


\begin{proof}
We proceed by induction on $\al$, beginning with $\al = k_0$ and ending at $\al = 0$. Suppose that $\al = k_0$. Let $f$ be the function defined in Proposition \ref{prop16}. Then
\[ \|\sum^{k_0}_{k=0}\Phi_k(z, D^kf(b))\| = \|Tf(z)\| \leq D\|S\|\,\|b-a\|^{\al+1}\|z-y_0\|^{\ell+1}.\]
Since $r_2 + r_4 = \al = k_0$, $D^kf(b) = 0$ if $k < k_0$, and $D^{k_0}f(b)= {k_0}!W$.
Hence
\[ k_0!\|\Phi_{k_0}(z,W)\| \leq D\|S\|\,\|b-a\|^{\al +1}\|z-y_0\|^{\ell+1}.\]

Now suppose that $0 \leq \al < k_0$, and the proposition has been proved for any $\beta$, $\al <\beta \leq k_0$.
Let $f$ be the function defined in Proposition \ref{prop16}, with the parameter $\al$. Then $D^kf(b) = 0$ if $k < \al$.
Hence
\[ \|\sum^{k_0}_{k=\al}\Phi_k(z, D^kf(b))\| = \|Tf(z)\| \leq D\|S\|\,\|b-a\|^{\al+1}\|z-y_0\|^{\ell+1}.\]
For $\al \leq k \leq k_0$,
\[ D^kf(b) = k!\sum\binom{r_1}{s_1}\binom{r_3}{s_3}W_{s_1,s_3},\]
where the sum is taken over all integers $0\leq s_1\leq r_1$, $0 \leq s_3 \leq r_3$, such that $s_1 + s_3 = k-\al$, and, for each such pair,
\[ W_{s_1,s_3}w^k = (x^*(b-a))^{r_1-s_1}(x^*(w))^{s_1+r_2}S(b-a)^{r_3-s_3}w^{s_3+r_4}.\]
If $k_0 \geq k > \al$, the inductive hypothesis gives
\[ k!\|\Phi_k(z,(x^*(b-a))^{k-\al}W_{s_1,s_3})\| \leq 2^{(k_0-k)(\ell+2)}D\|S\|\,\|b-a\|^{k +1}\|z-y_0\|^{\ell+1}.\]
Since $x^*(b-a) = \|b-a\|$ and $\sum\binom{r_1}{s_1}\binom{r_3}{s_3} \leq 2^{r_1 + r_3} = 2^{\ell+1}$, we get that
\[  \|\Phi_k(z, D^kf(b))\| \leq 2^{\ell+1+(k_0-k)(\ell+2)} D\|S\|\,\|b-a\|^{\al+1}\|z-y_0\|^{\ell+1}.\]
Then
\begin{align*}
\al!\|\Phi_\al(z,W)\| &= \|\Phi_\al(z,D^\al f(b))\| \\
&\leq  \sum^{k_0}_{k=\al+1}\|\Phi_k(z, D^kf(b))\| + D\|S\|\,\|b-a\|^{\al+1}\|z-y_0\|^{\ell+1}\\
&\leq [\sum^{k_0}_{k=\al+1}2^{\ell+1+(k_0-k)(\ell+2)}+1]\cdot D\|S\|\,\|b-a\|^{\al+1}\|z-y_0\|^{\ell+1}\\
&\leq 2^{(k_0-\al)(\ell+2)} D\|S\|\,\|b-a\|^{\al +1}\|z-y_0\|^{\ell+1},
\end{align*}
completing the induction.
\end{proof}

\begin{thm}\label{thm18}
Let $T: C^\infty(X,E)\to C^\infty(Y,F)$ be a separating, nowhere trivial linear operator which maps some open set in $C^\infty(X,E)$ onto a bounded set in $C^\infty(Y,F)$.  Then there is a partition of $Y$ into clopen subsets $(Y_\al)$ so that the support map is constant on each $Y_\al$.
\end{thm}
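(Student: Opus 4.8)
The plan is to show that the support map $h$ is Fr\'echet differentiable at every point of $Y$ with vanishing derivative; granting this, local constancy of $h$ (and hence the desired clopen partition) follows at once from the mean value inequality together with the local connectedness of the open set $Y \subseteq H$. To begin, I would fix $y_0 \in Y$, write $a = h(y_0)$, and invoke Theorem \ref{thm10} to obtain $k_0$ and $r > 0$ (with $\ell \geq k_0$, as arranged before Proposition \ref{prop15}) so that $Tf(y) = \sum_{k=0}^{k_0}\Phi_k(y, D^kf(h(y)))$ on $B(y_0,r)$. Since $T$ is nowhere trivial, I select $i \leq k_0$ and $S \in \cS^i(G,E)$ with $\|S\| = 1$ and $\Phi_i(y_0,S) \neq 0$. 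Using the continuity of $\Phi_i(\cdot,S)$ furnished by Theorem \ref{thm10} and the continuity of $h$, I shrink $r$ to some $r' \in (0,r)$ so that $\|\Phi_i(z,S)\| \geq c_0 := \tfrac12\|\Phi_i(y_0,S)\| > 0$ and $h(z) \in B(a,s)$ for all $z \in B(y_0,r')$.

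The crux of the argument is a judicious specialization of Proposition \ref{prop17}. I would apply it with parameters $r_1 = \ell + 1$, $r_2 = r_3 = 0$ and $r_4 = j = i$, so that $\al = r_2 + r_4 = i \leq k_0$ and the constraints $r_1 + r_3 = \ell+1$, $r_3 + r_4 = j$ are satisfied. With this choice the operator $W$ of Proposition \ref{prop17} collapses to $W = (x^*(b-a))^{\ell+1}S = \|b-a\|^{\ell+1}S$, where $b = h(z)$. Thus, for every $z \in B(y_0,r')$ with $b \neq a$, Proposition \ref{prop17} yields
\[ i!\,\|b-a\|^{\ell+1}\|\Phi_i(z,S)\| \leq 2^{(k_0-i)(\ell+2)}D\,\|b-a\|^{i+1}\|z-y_0\|^{\ell+1}. \]
Dividing by $i!\,\|b-a\|^{i+1}$ and using $\|\Phi_i(z,S)\| \geq c_0$ gives the key estimate $c_0\|h(z)-h(y_0)\|^{\ell-i} \leq C_1\|z-y_0\|^{\ell+1}$. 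I expect the main obstacle to be precisely this step: finding the parameter configuration that isolates $\Phi_i(z,S)$ against the single power $\|b-a\|^{\ell+1}$, and then controlling the fact that the constant $D = D(K)$ of Proposition \ref{prop16} depends on the compact set $K$ (so that no fixed ball can serve as $K$ in infinite dimensions).

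To circumvent the $K$-dependence I would argue sequentially. Take any sequence $z_n \to y_0$ in $B(y_0,r')\setminus\{y_0\}$; passing to a subsequence, either $h(z_n) = h(y_0)$ for all $n$ (so the difference quotient is $0$) or $h(z_n) \neq h(y_0)$ for all $n$. In the latter case set $K = \ol{\co}(\{z_n\}\cup\{y_0\})$, a compact convex subset of $B(y_0,r)$ since the closed convex hull of a compact set in a Banach space is compact; then $D = D(K)$ is a single finite constant valid for the whole sequence. If $i = \ell$ (which forces $i = \ell = k_0$), the key estimate reads $c_0 \leq C_1\|z_n-y_0\|^{\ell+1} \to 0$, a contradiction, so this case cannot occur and $h \equiv h(y_0)$ near $y_0$. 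If $i < \ell$, then $\|h(z_n)-h(y_0)\| \leq (C_1/c_0)^{1/(\ell-i)}\|z_n-y_0\|^{(\ell+1)/(\ell-i)}$, and because $(\ell+1)/(\ell-i) > 1$ the ratio $\|h(z_n)-h(y_0)\|/\|z_n-y_0\|$ tends to $0$. As the sequence was arbitrary, $h$ is differentiable at $y_0$ with $Dh(y_0) = 0$.

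Finally, $Dh \equiv 0$ on all of $Y$. Since $Y$ is open in the Banach space $H$ it is locally connected, and the mean value inequality forces $h$ to be constant on every open ball contained in $Y$, hence on every connected component of $Y$. Taking $(Y_\al)$ to be the connected components of $Y$ — which are clopen precisely because $Y$ is locally connected — yields the asserted partition and completes the proof.
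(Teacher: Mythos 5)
Your argument is correct and follows essentially the same route as the paper: the same specialization of Proposition \ref{prop17} with $r_1=\ell+1$, $r_2=r_3=0$, $r_4=j$, the same key estimate $\|h(z)-h(y_0)\|^{\ell-j}\lesssim\|z-y_0\|^{\ell+1}$, and the same conclusion that the derivative of $h$ vanishes everywhere. The only (harmless) variation is that you take $K$ to be the closed convex hull of a convergent sequence to get the full Fr\'echet derivative, where the paper takes $K=[y_0,z_0]$ and works with directional derivatives along segments.
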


\begin{proof}
Let $y_0 \in Y$ and keep the foregoing notation. Since $T$ is nowhere trivial, there exist $0\leq j \leq k_0$ and $S\in \cS^j(G,E)$ so that $\Phi_j(y_0,S) \neq 0$. Choose $1 > r,s > 0$ so that $B(y_0,r)\subseteq Y$, $h(B(y_0,r))\subseteq B(h(y_0),s) \subseteq X$ and $\inf_{z\in B(y_0,r)}\|\Phi_j(z,S)\| = \delta > 0$. Take any $z_0 \in B(y_0,r)\bs\{y_0\}$ and let $K$ be the compact interval $[y_0,z_0]$.  Let $z \in K \bs\{y_0\}$, $b = h(z)$ and let $x^*$ be a norming functional for $b-a = b-h(y_0)$.
Apply Proposition \ref{prop17} with $r_1 = \ell+1$, $r_2 = r_3 = 0$
and $r_4 = j$. Note that $\al = j$ as well.  We obtain, with $D = D(K)$,
\[ j!|x^*(b-a)^{\ell+1}|\,\|\Phi_j(z,S)\| \leq 2^{(k_0-j)(\ell+2)}D\|S\|\,\|b-a\|^{j+1}\|z-y_0\|^{\ell+1}.\]
Hence
\[\delta j!\biggl(\frac{\|b-a\|}{\|z-y_0\|}\biggr)^{\ell-j} \leq 2^{(k_0-j)(\ell+2)}D\|S\|\|z-y_0\|^{j+1}.
\]
Note that $j \leq k_0 \leq \ell$. Since the right hand side tends to $0$ as $z$ tends to $y_0$ in $K$, $j \neq \ell$.  Thus $\ell - j > 0$.
Therefore,
\[ \lim_{\substack{z\to y_0\\z \in [y_0,z_0]}}\frac{h(z) - h(y_0)}{z-y_0} = 0.\]
That is, $h$ has directional derivative $0$ in every direction at any point $y_0 \in Y$.  It follows easily that $h$ must be constant on a neighborhood of every point in $Y$, from which the conclusion of the theorem is easily deduced.
\end{proof}

\noindent{\bf Example.} Suppose that $1 \leq q \leq p\leq \infty$
and that $q$ is finite. Then there may exist a separating, nowhere trivial linear operator $T: C^p(X,E)\to C^q(Y,F)$ which maps some open set in $C^p(X,E)$ onto a bounded set in $C^q(Y,F)$ whose support map is not locally constant.  Indeed, let $I = (0,1)$ and consider the map $T: C^p(\R) \to C^q(I)$ defined by $Tf = f_{|I}$.
Clearly, $T$ is a separating, nowhere trivial linear operator. Taking $L = [0,1]$, it maps the open set $\{f\in C^p(\R): \rho_{L,q}(f) < 1\}$  onto a bounded set in $C^q(I)$.  However, it is clear that the support map is $h(y) = y$ for all $y \in I$.

Furthermore, if $q < p$, then $T$ maps the open set $\{f\in C^p(\R): \rho_{L,q+1}(f) < 1\}$  onto a relatively compact set in $C^q(I)$.

\section{Degeneracy of the support map, $p = q < \infty$}

The example in the last section shows that the methods of the previous two sections cannot be used to show that the support map of $T$ is locally constant when $p = q < \infty$, even if $T$ maps an open set onto a bounded set.  However, we will show in this section that the support map is locally constant if the nowhere trivial, separating map $T: C^p(X,E) \to C^p(Y,F)$ is assumed to be weakly compact.

In this section, we assume that $1 \leq p < \infty$ and that $T: C^p(X,E) \to C^p(Y,F)$ is a weakly compact, separating, nowhere trivial linear map.  Denote by $h$ the support map of $T$. By Theorem \ref{thm10}, $T$ has the representation
\[  Tf(y) = \sum^p_{k=0}\Phi_k(y, D^kf(h(y))),\ y \in Y,\ f \in C^p(X,E).\]

\begin{lem}\label{lem19}
Suppose that there are $k_0$, $0 < k_0 \leq p$ and $y_0 \in Y$ such that $\Phi_{k_0}(y_0,\cdot) \neq 0$.  Then $h$ is constant on a neighborhood of $y_0$.
\end{lem}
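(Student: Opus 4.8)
The plan is to reduce at once to the top order and then argue by contradiction with a single concentrated test function. Fix $S\in\cS^{k_0}(G,E)$ with $\Phi_{k_0}(y_0,S)\neq 0$; by continuity of $\Phi_{k_0}(\cdot,S)$ (Theorem~\ref{thm10}) we have $\Phi_{k_0}(y,\cdot)\neq 0$ for every $y$ in some ball $B(y_0,r)$. If $0<k_0<p$ there is nothing new to do: since $q=p$ we may take $q'=p\in\N_q$ in Theorem~\ref{thm12}, which applied at each $y\in B(y_0,r)$ gives $\|h(z)-h(y)\|^{p-k_0}\le C_y\|z-y\|^{p}$ for $z$ near $y$; as $p/(p-k_0)>1$ this forces $Dh(y)=0$ throughout $B(y_0,r)$, so $h$ is constant there by the mean value theorem. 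Thus the real content is the case $k_0=p$, in which Theorem~\ref{thm12} is vacuous, and this is where I expect the work to lie.

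So assume $k_0=p$ and suppose $h$ is not constant on any neighborhood of $y_0$. Then I can choose $y_n\to y_0$ with $b_n:=h(y_n)\neq a:=h(y_0)$; by continuity of $h$ we have $b_n\to a$, so $r_n:=\|b_n-a\|\to 0$, and after passing to a subsequence I may assume $0<3r_{n+1}<r_n$ and $B(b_n,r_n/2)\subseteq X$. Choosing $S$ as above (now $\Phi_p(y_0,S)\neq 0$), continuity gives $\|\Phi_p(y_n,S)\|\ge\delta>0$ for large $n$. With scalars $c_n>0$ to be fixed, put $f_n(x)=c_nS(x-b_n)^{p}$ and let $\vp_n$ be the rescaled bump of Lemma~\ref{lem11}. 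Estimating the derivatives of the degree-$p$ polynomial $f_n$ on $B(b_n,r_n/2)$ gives $\eta_{nk}=O(c_nr_n^{\,p-k})$ for every $k\in\N_p$, so conditions (\ref{eq1})--(\ref{eq2}) of Lemma~\ref{lem11} collapse to the single requirement $c_n\to 0$. Hence $f=\sum_n\vp_nf_n\in C^p(X,E)$ with $D^mf(a)=0$ for all $m\in\N_p$.

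Now I read off two competing bounds for $Tf(y_n)$. First, because the full jet of $f$ vanishes at $a=h(y_0)$, the Remark following Proposition~\ref{prop9} gives $D^j(Tf)(y_0)=0$ for all $j\in\N_q=\N_p$; since $Tf\in C^p(Y,F)$, Taylor's theorem yields $Tf(y)=o(\|y-y_0\|^{p})$, so $\|Tf(y_n)\|=o(\|y_n-y_0\|^{p})$. Second, $\vp_n\equiv 1$ near $b_n$ while the other summands vanish there, so $f\equiv f_n$ on a neighborhood of $b_n$; as $D^kf_n(b_n)=0$ for $k\neq p$ and $D^pf_n(b_n)=c_n\,p!\,S$, the representation (\ref{eq1.5}) gives $Tf(y_n)=\Phi_p(y_n,c_n\,p!\,S)=c_n\,p!\,\Phi_p(y_n,S)$, whence $\|Tf(y_n)\|\ge c_n\,p!\,\delta$ for large $n$. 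Comparing the two forces $c_n=o(\|y_n-y_0\|^{p})$. But the only demand on $(c_n)$ was $c_n\to 0$, so I am free to choose $c_n=\|y_n-y_0\|^{p}$; this is admissible and yet flatly contradicts $c_n=o(\|y_n-y_0\|^{p})$. The contradiction shows $h$ is constant on a neighborhood of $y_0$.

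The delicate point—the step I expect to demand the most care—is precisely that $k_0=p$ be handled without the extra order of smoothness that drove Theorems~\ref{thm12} and~\ref{thm18}. What makes it go through is that the degree-$p$ test polynomial has no derivatives beyond order $p$, so Lemma~\ref{lem11} imposes only $c_n\to 0$, with no positive power of $r_n$ to overcome; this is exactly what frees the tuning $c_n=\|y_n-y_0\|^{p}$ to defeat the $o(\|y-y_0\|^{p})$ decay forced by the Remark. (For $k_0<p$ the same function carries genuine higher derivatives, Lemma~\ref{lem11} then demands $c_n=o(r_n^{\,p-k_0})$, and the direct contradiction persists only when $\|y_n-y_0\|^{p}=o(r_n^{\,p-k_0})$, which is why routing that range through Theorem~\ref{thm12} is cleaner.) Throughout, the argument leans only on the concentration Lemma~\ref{lem11} and on the Remark after Proposition~\ref{prop9}, i.e.\ on continuity together with the separating property.
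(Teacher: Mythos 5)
Your proof is correct and takes essentially the same route as the paper: for $0<k_0<p$ both arguments apply Theorem~\ref{thm12} with $q'=p$ and then use continuity of $\Phi_{k_0}(\cdot,S)$ to get $Dh=0$ on a ball. The only divergence is at $k_0=p$, where the paper simply cites Theorem~\ref{thm12} with the degenerate exponent $p-k_0=0$, whereas you inline the underlying concentration argument (Lemma~\ref{lem11} together with the Remark after Proposition~\ref{prop9}) --- which is exactly the engine of the proof of Theorem~\ref{thm12} specialized to $i=p$, so the content is the same, though your separate treatment of that edge case is arguably the more careful reading.
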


\begin{proof}
By Theorem \ref{thm12}, there exist $r > 0$ and $C < \infty$ so that $\|h(y) - h(y_0)\|^{p-k_0} \leq \|y-y_0\|^p$ for all $y \in B(y_0,r)$. Hence $\lim_{y\to y_0}\|h(y)-h(y_0)\|/\|y-y_0\| = 0$; that is, $Dh(y_0) = 0$. Fix $S$ such that $\Phi_{k_0}(y_0,S) \neq 0$. By continuity of $\Phi_{k_0}(\cdot,S)$, $\Phi_{k_0}(y,\cdot) \neq 0$ for all $y$ in a neighborhood of $y_0$. Hence $Dh(y) = 0$ for all $y$ in a neighborhood of $y_0$.  The lemma follows.
\end{proof}

A subset $Y'$ of $Y$ is {\em perfect} if every point of $Y'$ is an accumulation point of $Y'$. If $f$ is a function from $Y'$ into a Banach space, then we may define the derivative of $f$ in the usual manner, with the relevant limit taken in $Y'$. Precisely, $f$ is differentiable at $y_0\in Y'$ if there exists a bounded linear operator $T$ so that
\[ \lim_{\substack{y\to y_0\\y \in Y'}}\frac{\|f(y) - f(y_0) - T(y-y_0)\|}{\|y-y_0\|} = 0.\]
The derivative operator $T$, if it exists, is denoted by $D_{Y'}f(y_0)$. Higher derivatives may be defined in the same manner. Usual rules of differentiation, in particular the chain rule and product and quotient rules for real-valued functions, still hold.

\begin{prop}\label{prop20}
Suppose that $Y'$ is a  perfect subset of $Y$ and that $\Phi_k(y,\cdot) = 0$ for all $y \in Y'$ and all $k$, $0 < k\leq p$. Then for all $x^* \in G^*$ and all $y_0 \in Y$, $D_{Y'}(x^*\circ h)(y_0) = 0$.
\end{prop}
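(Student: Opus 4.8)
The plan is to argue by contradiction: if $x^*\circ h$ fails to have vanishing derivative along $Y'$ at a point, I will manufacture a bounded sequence of rapidly oscillating test functions whose images have no weakly convergent subsequence, contradicting the weak compactness of $T$. Throughout I fix an accumulation point $y_0$ of $Y'$ (the statement being vacuous otherwise), write $a=h(y_0)$ and $\psi=x^*\circ h$, and I exploit the hypothesis $\Phi_k(y,\cdot)=0$ on $Y'$ for $k>0$ through two preliminary reductions. Applying Theorem \ref{thm10} to a constant function $x\mapsto e$ (all of whose higher derivatives vanish) gives $\Phi_0(\cdot,e)=T(e\cdot\mathbf 1)\in C^p(Y,F)$; choosing $e_0$ with $\Phi_0(y_0,e_0)\neq0$ (possible since $T$ is nowhere trivial and only the $k=0$ term survives on $Y'$), the function $v:=\Phi_0(\cdot,e_0)$ is genuinely $C^p$ near $y_0$ with $v(y_0)\neq0$. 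Applying Theorem \ref{thm10} to $f(x)=x^*(x-a)\,e_0$ (for which $D^kf=0$ when $k\ge2$) yields, on a ball $B(y_0,r)$, \[ Tf(y)=\bigl(\psi(y)-x^*(a)\bigr)v(y)+\Phi_1\bigl(y,x^*\otimes e_0\bigr), \] whose last term vanishes on $Y'$. Fixing $F^*\in F^*$ with $F^*(v(y_0))\neq0$ and dividing, I find that $\psi$ agrees on $Y'\cap B(y_0,r)$ with the honest $C^p$ function $G:=x^*(a)+F^*(Tf)/F^*(v)$. Hence $\psi$ is differentiable along $Y'$ with $D_{Y'}\psi(y_0)=D_{Y'}G(y_0)$; the task is to upgrade this to $D_{Y'}\psi(y_0)=0$.

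Suppose then $D_{Y'}\psi(y_0)\neq0$. As $D_{Y'}\psi=DG|_{Y'}$ is continuous along $Y'$, I may select $y_*\in Y'$ near $y_0$ with $s_*:=\psi(y_*)\neq0$, a $Y'$-direction $w$ at $y_*$ with $d:=D_{Y'}\psi(y_*)(w)\neq0$, and $F^*$ with $F^*(v(y_*))\neq0$. Let $U=\{f:\rho_{L,\ell}(f)<\ep\}$ be an open set with $T(U)$ relatively weakly compact ($L$ a compact neighbourhood of $a$, $\ell\le p$). Using property ($*$) I define the functions \[ f_N(x)=\mu_N^{-\ell}\,\sin\!\bigl(\mu_N\,x^*(x-a)\bigr)\,e_0,\qquad \mu_N\to\infty, \] which are uniformly bounded in $\rho_{L,\ell}$ since $\|D^jf_N(x)\|\le\mu_N^{\,j-\ell}\|x^*\|^j\|e_0\|\le\|x^*\|^j\|e_0\|$ for $j\le\ell$. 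Thus a fixed scalar multiple of $(f_N)$ lies in $U$, so $(Tf_N)$ lies in a relatively weakly compact subset of $C^p(Y,F)$.

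On $Y'$ one has $Tf_N=\mu_N^{-\ell}\sin(\mu_N\psi)\,v=\mu_N^{-\ell}\sin(\mu_N G)\,v=:\tilde g_N$ restricted to $Y'$, where $\tilde g_N$ is a genuine $C^p$ function. The intrinsic derivatives of order $\le\ell$ along $Y'$ depend only on the values on $Y'$, so $D^\ell_{Y'}(Tf_N)(y_*)=D^\ell_{Y'}\tilde g_N(y_*)$, and for the smooth function $\tilde g_N$ this intrinsic jet is a fixed continuous-linear functional of its classical jet at $y_*$. Extracting the top-order term in $\mu_N$ from $\mu_N^{-\ell}\sin(\mu_N G)v$ (a Fa\`a-di-Bruno expansion, the lower-order terms carrying negative powers of $\mu_N$) leaves, after pairing with $F^*$ and the direction $w$, \[ \sin^{(\ell)}(\mu_N s_*)\,d^\ell\,F^*(v(y_*))+O(\mu_N^{-1}), \] which oscillates with amplitude bounded away from $0$. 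Since this quantity is the value at $Tf_N$ of a continuous linear functional on $C^p(Y,F)$, Eberlein--Smulian forces it to converge along any weakly convergent subsequence; playing the corresponding functionals at the points $y_n\to y_0$ of $Y'$ against the continuity of the weak limit $\Psi\in C^p(Y,F)$ (which makes $y\mapsto F^*(D^\ell\Psi(y)(\cdot))$ continuous on $Y'$) produces the contradiction.

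The hard part will be this final step, converting the pointwise oscillation of $\sin^{(\ell)}(\mu_N\psi(\cdot))$ into a genuine failure of weak convergence. A single base point only forces one scalar sequence to oscillate, which a subsequence can tame, so one must use the perfect set $Y'$ globally and tune the frequencies $\mu_N$ so that no subsequence can make $y\mapsto\lim_j\sin^{(\ell)}(\mu_{N_j}\psi(y))$ continuous along $Y'$; the given sequence $y_n\to y_0$, along which $\psi(y_n)\to0$ while $D_{Y'}\psi(y_n)\not\to0$, is the natural lever against the continuity of $\Psi$. This is delicate precisely because $Y'$ may be totally disconnected and $\psi(Y')$ need not contain an interval, so equidistribution over a continuum is unavailable; in addition one must justify the existence, continuity, and Fa\`a-di-Bruno calculus of the intrinsic derivatives $D^\ell_{Y'}$ on the (possibly fractal) set $Y'$ and control the lower-order remainders in the expansion.
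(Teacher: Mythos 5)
Your first reduction --- writing $\psi=x^*\circ h$ on $Y'$ as a quotient $F^*(Tf)/F^*(v)$ of genuine $C^p$ functions, so that $\psi$ is differentiable (indeed $p$-times differentiable) along $Y'$ --- is exactly the first step of the paper's proof. But the heart of the proposition is the passage from ``$D_{Y'}\psi(y_0)$ exists'' to ``$D_{Y'}\psi(y_0)=0$'', and there your argument is not a proof: it is a plan whose decisive step you yourself flag as open. The oscillation strategy has a structural defect that you have correctly diagnosed but not repaired. Relative weak compactness of $\{Tf_N\}$ is never contradicted by the mere oscillation of a bounded scalar sequence such as $\sin^{(\ell)}(\mu_N s_*)\,d^\ell F^*(v(y_*))$: by Eberlein--\v{S}mulian one simply passes to a subsequence along which it converges, and nothing forces the resulting limits at different points of $Y'$ to assemble into a discontinuous function. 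To rule out \emph{every} subsequence you would need some equidistribution of $(\mu_{N_j}\psi(y))_j$ as $y$ ranges over $Y'$, and, as you note, $\psi(Y')$ may be countable or measure zero, so no such mechanism is available. There are also secondary unresolved issues (the meaning and Fa\`a-di-Bruno calculus of the intrinsic $\ell$-th derivative $D^\ell_{Y'}$ on a general perfect set, and control of the lower-order remainders), but the fatal point is that the contradiction is never actually derived.

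The paper avoids oscillation entirely and uses \emph{concentration} plus the double-limit criterion. It takes $f_m=(\beta_m\circ x^*)\cdot u$ where the $\beta_m\in C^p(\R)$ have uniformly bounded derivatives, satisfy $\beta_m^{(j)}(x^*(h(y_0)))=0$ for $0\le j\le p$, and $\beta_m^{(p)}(t)=1$ for $1/m\le|t-x^*(h(y_0))|\le 2$. The jet condition at $x^*(h(y_0))$ forces $D^p_{Y'}(Tf_m)(y_0)=0$ for every $m$, while for a sequence $y_s\to y_0$ in $Y'$ the Leibniz/Fa\`a-di-Bruno expansion shows that $\lim_m D^p_{Y'}(Tf_m)(y_s)w^p$ isolates the top term $(D_{Y'}\psi(y_s)w)^p\,g(y_s)$, the lower-order terms being $O(|\psi(y_s)-\psi(y_0)|)$ uniformly in $m$. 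Weak compactness enters only through the equality of the two iterated limits $\lim_s\lim_m=\lim_m\lim_s$ for the relatively weakly compact sequence $(D^pTf_{m}{}_{|K})$ in $C(K,\cS^p(H,F))$, which yields $(D_{Y'}\psi(y_0)w)^p\,g(y_0)=0$ and hence the conclusion. If you want to salvage your approach, the fix is to replace $\sin(\mu_N\,\cdot)$ by such a concentrating family and to replace ``no subsequence converges'' by the iterated-limit identity; as written, the proposal has a genuine gap at its main step.
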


\begin{proof}
By the assumptions, $T$ has the representation $Tf(y) = \Phi_0(y,f(h(y))$ for all $y \in Y'$. For any $y_0 \in Y'$, since $T$ is nowhere trivial, there exists $u \in E$ such that $\Phi_0(y_0,u)\neq 0$. Set $g(y) = \Phi_0(y,u), y \in Y'$. Then $g$ is the restriction to $Y'$ of a function in $C^p(Y,F)$. For any $\al \in C^p(X)$, $(\al\circ h) \cdot g = T(\al\cdot u)_{|Y'}$ is $p$-times continuously differentiable on $Y'$.
Choose $v^*\in F^*$ so that $v^*\circ g\neq 0$ on a neighborhood $O$ of $y_0$. Then $\al\circ h = v^*\circ T(\al\cdot u)/v^*\circ g$ on $O\cap Y'$ and hence $\al\circ h$ is $p$-times continuously differentiable there. Since $y_0 \in Y'$ is arbitrary, $\al\circ h$ is $p$-times continuously differentiable on $Y'$.

Let $x^* \in G^*$. By the preceding paragraph, $x^*\circ h$ is $p$-times continuously differentiable on $Y'$.  Consider a function $\al$ of the form $\beta\circ x^*$, where $\beta \in C^p(\R)$. We have, for any
$y \in Y'$ and $w \in H$,
\begin{equation}\label{eq5}
D_{Y'}^p(T(\al\cdot u))(y)w^p = \sum^p_{j=0}\binom{p}{j} D_{Y'}^j(\beta\circ x^*\circ h)(y)w^j\cdot D_{Y'}^{p-j}g(y)(w^{p-j}).
\end{equation}
Applying the chain rule to $\beta\circ(x^*\circ h)$ gives, for $1 \leq j \leq p$,
\begin{multline}\label{eq6}
D_{Y'}^j(\beta\circ x^*\circ h)(y)w^j = \sum^j_{i=1}\sum_{\substack{k_1+\cdots+k_i = j\\k_1,\dots, k_i \geq 1}} \binom{j}{k_1\ k_2\ \cdots k_i}\frac{1}{i!}\cdot\\ \beta^{(i)}((x^*\circ h)(y))\cdot D_{Y'}^{k_1}(x^*\circ h)(y)w^{k_1}\cdots D_{Y'}^{k_i}(x^*\circ h)(y)w^{k_i}.
\end{multline}
Fix $y_0 \in Y'$ and let $x_0 = h(y_0)$. Choose a sequence $(\beta_m)$ in $C^p(\R)$ so that
\begin{enumerate}
\item[(a)] $\max_{0\leq j \leq p}\sup_m\sup_{t\in \R}|\beta^{(j)}_m(t)| = M < \infty$,
\item[(b)] $\beta^{(p)}_m(t) = 1$ if $1/m \leq |t-x^*(x_0)| \leq 2$ and
\item[(c)] $\beta_m^{(j)}(x^*(x_0)) = 0$, $0\leq j \leq p$, for all $m$.
\end{enumerate}
Let $f_m = (\beta_m\circ x^*)\cdot u$.  Then $(f_m)$ is a bounded sequence in $C^p(X,E)$. Hence $(Tf_m)$ is relatively weakly compact in $C^p(Y,F)$. Let $(y_s)$ be a sequence in $Y'$ converging to $y_0$. The set $K = \{(y_s)\} \cup \{y_0\}$ is compact and thus the map $g \to D^pg_{|K}$ is a continuous linear operator from $C^p(Y,F)$ into $C(K,\cS^p(H,F))$; hence it is continuous with respect to the weak topologies on these spaces. Therefore, $(D^pT{f_m}_{|K})$ is relatively weakly compact in $C(K,\cS^p(H,F))$. To simplify notation, let us assume that $(D^pT{f_m}_{|K})$ converges weakly in $C(K,\cS^p(H,F))$. Then
\[  \lim_s\lim_m D^p(Tf_m)(y_s) = \lim_m\lim_s D^p(Tf_m)(y_s)  = \lim_mD^p(Tf_m)(y_0),\]
where the limits are taken with respect to the weak topology on $\cS^p(H,F)$.
Of course, the same equation holds if the general derivative $D$ is replaced by the restricted derivative $D_{Y'}$.
By (c) and equations (\ref{eq5}) and (\ref{eq6}), $D_{Y'}^p(Tf_m)(y_0) = 0$ for all $m$.  Hence $\lim_s\lim_m D_{Y'}^p(Tf_m)(y_s) = 0$ weakly. If $0 \leq i < p$, by (a) and (c),
\begin{align*}
|\beta_m^{(i)}((x^*\circ h)(y_s))|& =  |\beta_m^{(i)}((x^*\circ h)(y_s)) - \beta_m^{(i)}((x^*\circ h)(y_0))|\\ & \leq M|(x^*\circ h)(y_s) - (x^*\circ h)(y_0)|.
\end{align*}
Thus $\lim_s\sup_m|\beta_m^{(i)}((x^*\circ h)(y_s))| = 0$, $0 \leq i < p$. From this and (\ref{eq6}), it follows that $\lim_s\sup_m\|D_{Y'}^j(\beta_m\circ x^*\circ h)(y_s)\| = 0$ if $0 \leq j < p$, and that
\begin{multline*}
\lim_s\sup_m\sup_{\|w\|\leq 1}|D_{Y'}^p(\beta_m\circ x^*\circ h)(y_s)w^p -\\ \beta_m^{(p)}((x^*\circ h)(y_s))(D_{Y'}(x^*\circ h)(y_s)w)^p| = 0.
\end{multline*}
Therefore, from (\ref{eq5}), we obtain for $w \in H$ and with limits taken in the weak topology on $F$,
\begin{align*}
0 & = \lim_s\lim_m D_{Y'}^p(Tf_m)(y_s)w^p \\
& =\lim_s\lim_mD_{Y'}^p(\beta_m\circ x^*\circ h)(y_s)w^p\cdot g(y_s)\\
& = \lim_s\lim_m\beta_m^{(p)}((x^*\circ h)(y_s))(D_{Y'}(x^*\circ h)(y_s)w)^p\cdot g(y_s).
\end{align*}
Note that $\lim_sD_{Y'}(x^*\circ h)(y_s)w = D_{Y'}(x^*\circ h)(y_0)w$ and $\lim_sg(y_s) = g(y_0) \neq 0$. If $h(y_s) \neq x_0$, then $\lim_m\beta_m^{(p)}((x^*\circ h)(y_s)) = 1$ by (b).  Hence, if there is a sequence $(y_s)$ in $Y'$ converging to $y_0$ so that $h(y_s) \neq x_0$ for all $s$, then $D_{Y'}(x^*\circ h)(y_0) = 0$. Otherwise, $h$ is constant on a neighborhood of $y_0$ in $Y'$ and $D_{Y'}(x^*\circ h)(y_0) = 0$ also.
\end{proof}

\begin{cor}\label{cor21}
Suppose that $B(y_0,r) \subseteq Y$ and that $\Phi_k(y,\cdot) = 0$ for all $y \in B(y_0,r)$ and $0 < k\leq p$. Then $h$ is constant on $B(y_0,r)$.
\end{cor}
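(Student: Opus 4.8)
The plan is to deduce the corollary directly from Proposition \ref{prop20} by taking $Y' = B(y_0,r)$. First I would observe that an open ball in the (necessarily nontrivial) Banach space $H$ is a perfect subset of $Y$: every point of $B(y_0,r)$ is an accumulation point of the ball. Moreover $B(y_0,r)$ is convex, and in particular connected. Since by hypothesis $\Phi_k(y,\cdot) = 0$ for all $y \in B(y_0,r)$ and all $k$ with $0 < k \leq p$, the set $Y' = B(y_0,r)$ satisfies exactly the hypotheses of Proposition \ref{prop20}. Applying that proposition, I conclude that $D_{Y'}(x^* \circ h)(y) = 0$ for every $y \in B(y_0,r)$ and every $x^* \in G^*$.

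Next I would exploit that $Y'$ is open. When the domain $Y'$ is an open set, the limit defining the restricted derivative $D_{Y'}$ is taken over a full neighborhood of the base point, so $D_{Y'}$ coincides with the ordinary Fr\'echet derivative $D$. Thus, for each fixed $x^* \in G^*$, the scalar-valued function $x^* \circ h$ is differentiable on the convex open set $B(y_0,r)$ with $D(x^* \circ h) \equiv 0$ there. By the mean value inequality on a convex set, $x^* \circ h$ is therefore constant on $B(y_0,r)$. Since $G^*$ separates the points of $G$ (Hahn--Banach), the fact that $x^* \circ h$ is constant on $B(y_0,r)$ for every $x^* \in G^*$ forces $h$ itself to be constant on $B(y_0,r)$, which is the assertion of the corollary.

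I do not expect any serious obstacle here: the substantive content is entirely carried by Proposition \ref{prop20}, and what remains is the routine passage from ``all functional derivatives vanish on a connected open set'' to ``$h$ is constant.'' The only points requiring a word of care are the verification that $B(y_0,r)$ is perfect (so that Proposition \ref{prop20} applies) and the identification of $D_{Y'}$ with the genuine derivative $D$ on the open ball, after which the convexity and connectedness of the ball, together with the separation of points by $G^*$, complete the argument.
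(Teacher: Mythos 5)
Your argument is correct and is essentially the paper's own proof: the paper likewise takes $Y' = B(y_0,r)$ in Proposition \ref{prop20}, concludes $D(x^*\circ h)=0$ on the ball, and deduces constancy of $h$ from the constancy of each $x^*\circ h$. Your additional remarks (the ball is perfect, $D_{Y'}=D$ on an open set, the mean value inequality on a convex set, Hahn--Banach separation) merely make explicit steps the paper leaves implicit.
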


\begin{proof}
Take $Y'$ to be $B(y_0,r)$ in Proposition \ref{prop20}.  For all $y \in B(y_0,r)$ and all $x^* \in G^*$, $D(x^*\circ h)(y) = 0$. Thus, for all $x^*\in G^*$, $x^*\circ h$ is constant on $B(y_0,r)$. So $h$ is constant on $B(y_0,r)$.
\end{proof}

Let $U$ be the subset of $Y$ consisting of all points $y \in Y$ so that $h$ is constant on a neighborhood of $y$. Then the set $Y' = Y \bs U$ is a closed (in $Y$) perfect subset of $Y$. Indeed, $U$ is obviously open and hence $Y'$ is closed in $Y$. If $y_0$ is an isolated point in $Y'$, then there exists $r > 0$ so that $Dh(y) = 0$ for all $y \in B(y_0,r)\bs \{y_0\}$. Thus $h$ is constant on $B(y_0,r)\bs \{y_0\}$ and hence on $B(y_0,r)$ by continuity. This contradicts the fact that $y_0 \notin U$.

\begin{prop}\label{prop22}
Suppose that $y_0 \in Y'$. Then $D(x^*\circ h)(y_0) = 0$ for all $x^* \in G^*$.
\end{prop}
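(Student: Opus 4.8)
The plan is to combine the restricted-derivative conclusion of Proposition \ref{prop20} with a geometric argument that upgrades the derivative taken along $Y'$ to the full Fr\'echet derivative on a neighborhood. First I would verify the hypotheses of Proposition \ref{prop20} for the set $Y'$. Fix any $y \in Y'$. Since $y \notin U$, the map $h$ is not constant on any neighborhood of $y$, so the contrapositive of Lemma \ref{lem19} forces $\Phi_k(y,\cdot) = 0$ for every $k$ with $0 < k \leq p$. As this holds for all $y \in Y'$, and $Y'$ is a perfect set (as noted before the statement of the proposition), Proposition \ref{prop20} applies and yields $D_{Y'}(x^*\circ h)(y_0) = 0$ for every $x^* \in G^*$ and every $y_0 \in Y'$.

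It remains to pass from the restricted derivative to the ordinary one. Fix $y_0 \in Y'$ and $x^* \in G^*$, and choose $\rho > 0$ with $B(y_0,\rho) \subseteq Y$. Given $\ep > 0$, the vanishing of $D_{Y'}(x^*\circ h)(y_0)$ furnishes $0 < \delta \leq \rho$ such that $|x^*(h(w)-h(y_0))| \leq \ep\|w-y_0\|$ for every $w \in Y' \cap B(y_0,\delta)$. I would then show that the same estimate holds for \emph{every} $y \in B(y_0,\delta)$, which gives $D(x^*\circ h)(y_0) = 0$. If $y \in Y'$ this is immediate, so suppose $y \in U$. Consider the segment $\gamma(t) = (1-t)y_0 + ty$, $t \in [0,1]$, which lies in $B(y_0,\delta) \subseteq Y$ by convexity of the ball. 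The set $\{t : \gamma(t) \in Y'\}$ is closed, contains $0$, and omits $1$; let $t_0$ be its maximum, so $w := \gamma(t_0) \in Y'$ with $\|w-y_0\| = t_0\|y-y_0\| \leq \|y-y_0\|$, while $\gamma((t_0,1])$ is a connected subset of $U$.

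The crux is the following observation. Because $h$ is locally constant on $U$ (by the very definition of $U$), it is constant on the connected set $\gamma((t_0,1])$; continuity of $h$ at $\gamma(t_0)$ then forces $h(y) = h(\gamma(1)) = h(\gamma(t_0)) = h(w)$. Consequently $|x^*(h(y)-h(y_0))| = |x^*(h(w)-h(y_0))| \leq \ep\|w-y_0\| \leq \ep\|y-y_0\|$, the bound being trivial in the degenerate case $t_0 = 0$ (where $w = y_0$ and $h(y) = h(y_0)$). Since $\ep > 0$ is arbitrary, $D(x^*\circ h)(y_0) = 0$.

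The invocations of Lemma \ref{lem19} and Proposition \ref{prop20} are routine; I expect the genuine content to be the bridging step, namely the recognition that every value taken by $h$ on $U$ near $y_0$ is already attained at a point of $Y'$ no farther from $y_0$, obtained by sliding back along the radial segment to the last point lying on $Y'$. This is precisely what allows the $o(\|\cdot\|)$ control available only on the thin set $Y'$ to propagate to a genuine neighborhood of $y_0$.
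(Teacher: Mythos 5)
Your proposal is correct and follows essentially the same route as the paper: both apply Lemma \ref{lem19} and Proposition \ref{prop20} to get the vanishing of the restricted derivative on $Y'$, and both bridge to the full derivative by sliding back along the radial segment from $y$ to $y_0$ to the last point $w$ lying in $Y'$, using local constancy of $h$ on $U$ plus continuity to conclude $h(y)=h(w)$. The only cosmetic difference is that the paper phrases the limit via sequences while you use an $\ep$--$\delta$ formulation.
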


\begin{proof}
By Lemma \ref{lem19}, $\Phi_k(y,\cdot) = 0$ if $y \in Y'$ and $0 < k \leq p$. Fix $x^*\in G^*$. By Proposition \ref{prop20}, $D_{Y'}(x^*\circ h)(y) = 0$ for all $y \in Y'$. In particular, if $(y_n)$ is a sequence in $Y'\bs\{y_0\}$ converging to $y_0$, then
\[ \lim_n \frac{|(x^*\circ h)(y_n) - (x^*\circ h)(y_0)|}{\|y_n-y_0\|} = 0.\]
Now let $(y_n)$ be a sequence in $U$ converging to $y_0$. We may assume that all $y_n$ belong to an open ball centered at $y_0$ contained in $Y$.
For each $n$, set $t_n = \sup\{0\leq t < 1: (1-t)y_0 + ty_n \in Y'\}$ and $z_n = (1-t_n)y_0 + t_ny_n$. Since $Y'$ is closed in $Y$, $z_n \in Y'$. Also, the segment $(z_n,y_n]$ is contained in $U$ and hence $h$ is constant there. By continuity, $h(z_n) = h(y_n)$. Thus
\[ \lim_{\substack{n\\z_n \neq y_0}}\frac{|(x^*\circ h)(y_n) - (x^*\circ h)(y_0)|}{\|y_n-y_0\|} \leq \lim_{\substack{n\\z_n \neq y_0}}\frac{|(x^*\circ h)(z_n) - (x^*\circ h)(y_0)|}{\|z_n-y_0\|} = 0\]
since $D_{Y'}(x^*\circ h)(y_0) = 0$. While for those $n$ where $z_n = y_0$, $h(y_n) = h(y_0)$. This proves that $D(x^*\circ h)(y_0) = 0$ for all $x^*\in G^*$.
\end{proof}

\begin{thm}\label{thm23}
Let $1 \leq p < \infty$ and let $T: C^p(X,E) \to C^p(Y,F)$ be a weakly compact, separating, nowhere trivial linear operator. Then there is a partition of $Y$ into clopen subsets $(Y_\al)$ so that the support map is constant on each $Y_\al$.
\end{thm}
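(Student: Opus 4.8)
The plan is to prove that the support map $h$ is locally constant on all of $Y$, after which the required clopen partition drops out immediately. Recall from the discussion preceding Proposition \ref{prop22} that the set $U$ of points near which $h$ is locally constant is open, and that $Y' = Y \bs U$ is a closed perfect subset of $Y$. The whole argument is really an assembly of the results already in hand; the substantive analytic work has been done in Lemma \ref{lem19} and Propositions \ref{prop20} and \ref{prop22}.

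The first step is to establish that $D(x^* \circ h)(y) = 0$ for every $y \in Y$ and every $x^* \in G^*$. For $y \in U$ this is automatic, since $h$ is constant on a neighborhood of $y$, so $x^* \circ h$ is constant there. For $y \in Y'$ it is precisely the content of Proposition \ref{prop22}. Since $U \cup Y' = Y$, the derivative of each scalar composition $x^* \circ h$ vanishes identically on $Y$.

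Now I would fix $y_0 \in Y$ and a ball $B(y_0, r) \subseteq Y$. For each $x^* \in G^*$, the real-valued function $x^* \circ h$ is defined on $B(y_0,r)$ and, by the previous step, has Fr\'echet derivative zero at every point of this ball. As the ball is convex, hence connected, a standard mean-value argument shows that $x^* \circ h$ is constant on $B(y_0, r)$; thus $x^*(h(y)) = x^*(h(y_0))$ for all $y \in B(y_0, r)$. Since this holds for every $x^* \in G^*$, the Hahn--Banach theorem forces $h(y) = h(y_0)$ for all $y \in B(y_0, r)$. Hence $h$ is constant on $B(y_0, r)$, which means $y_0 \in U$. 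As $y_0$ was arbitrary, $U = Y$ and $h$ is locally constant.

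Finally, local constancy means that for each value $x \in X$ the level set $h^{-1}(x)$ is open, and these level sets partition $Y$. A partition of $Y$ into open sets is automatically a partition into clopen sets, since each piece is the complement of the (open) union of the remaining pieces. Taking $(Y_\al)$ to be the nonempty level sets of $h$ then yields the required decomposition. The only point that needs care in this last stage is that we control $h$ only through its scalar compositions $x^* \circ h$, so the passage from ``every $x^* \circ h$ is locally constant'' to ``$h$ is locally constant'' must route through Hahn--Banach; this is the step where the infinite-dimensionality of $G$ is genuinely felt, though it causes no real difficulty.
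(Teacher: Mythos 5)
Your proposal is correct and follows essentially the same route as the paper: both reduce to showing $D(x^*\circ h)\equiv 0$ on $Y$ by splitting into the open set $U$ and the perfect set $Y'$ (via Proposition \ref{prop22}), and then recover constancy of $h$ from its scalar compositions by Hahn--Banach. The only cosmetic difference is in the final packaging --- you take level sets of the locally constant $h$, while the paper integrates along segments and takes connected components using local connectedness of $Y$ --- but the substance is identical.
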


\begin{proof}
Fix $x^* \in G^*$. If $y \in U$, then $h$ is constant on a neighborhood of $y$. Thus $D(x^*\circ h)(y) = 0$. On the other hand, if $y \in Y'$, then $D(x^*\circ h)(y) = 0$ by Proposition \ref{prop22}. It follows that if a segment $[y_1,y_2]$ lies in $Y$, then $x^*(h(y_1)) = x^*(h(y_2))$. Since this holds for all $x^*\in G^*$, $h(y_1) = h(y_2)$.  Hence $h$ is constant on connected components of $Y$. As $Y$ is locally connected, the connected components of $Y$ are clopen in $Y$.
\end{proof}

\section{Compact and weakly compact mappings}

\begin{prop}\label{prop24}
Assume that $1 \leq p \leq \infty$ and $0 \leq q \leq \infty$.  Let $T: C^p(X,E) \to C^q(Y,F)$ be a nowhere trivial separating linear map. If $T$ maps an open set onto a bounded set, then the support map $h$ of $T$ maps $Y$ onto a relatively compact set in $X$. Moreover, in the representation of $T$ given by (\ref{eq1.5}) in Theorem \ref{thm10}, there exists $\ell \in \N_p$ such that $\Phi_k =0$ for all $k > \ell$.
\end{prop}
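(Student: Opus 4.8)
The plan is to exploit the hypothesis that $T$ maps an open set onto a bounded set by translating ``boundedness'' in $C^q(Y,F)$ into a single uniform estimate of the form (\ref{eq4}). Because $T$ is continuous (boundedness forces continuity), the definition of boundedness in $C^q(Y,F)$ means that every seminorm $\rho_{L',\ell'}$ is bounded on the image of the open set; since the open set absorbs a basic neighborhood defined by some $\rho_{K,k}$, I expect to obtain a single compact $L\subseteq X$ and a single $\ell\in\N_p$ controlling all the derivatives of $Tf$ in terms of $\max_{0\le j\le\ell}\sup_{x\in L}\|D^jf(x)\|$, uniformly over all $f$. The key point is that the \emph{same} compact set $L$ and the \emph{same} order $\ell$ serve every output seminorm; this is exactly what ``maps an open set onto a bounded set'' buys us, in contrast to mere continuity, which would allow $L$ and $\ell$ to grow with the target seminorm.

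Granting such an estimate, I would first prove the claim about $h(Y)$. Suppose $h(y_0)\notin\overline{L}$ for some $y_0\in Y$. Then I can separate $h(y_0)$ from $L$ by a bump-type function: choose $f\in C^p(X,E)$ supported near $h(y_0)$ and vanishing on a neighborhood of $L$, with $Tf(y_0)\neq 0$ (possible because $T$ is nowhere trivial and, via Theorem \ref{thm10}, $Tf(y_0)$ depends only on the jet of $f$ at $h(y_0)$). Then I can rescale $f$ to make $\max_{0\le j\le\ell}\sup_{x\in L}\|D^jf(x)\|$ equal to $0$ while keeping $Tf(y_0)\neq 0$, contradicting the estimate. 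Concretely, since $f$ vanishes on a neighborhood of $L$, the right-hand side of the estimate is $0$, forcing $\|Tf(y_0)\|=0$. Hence $h(Y)\subseteq\overline{L}$, a compact set, so $h(Y)$ is relatively compact.

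For the statement about the $\Phi_k$, I would use the explicit formula $\Phi_k(y,S)=Tf_{y,S}(y)/k!$ from the proof of Theorem \ref{thm10}, where $f_{y,S}(x)=S[(x-h(y))^k]$. For $k>\ell$ one has $D^jf_{y,S}(h(y))=0$ for all $0\le j\le\ell$, since the lowest-order nonvanishing derivative of a homogeneous degree-$k$ polynomial at its center is of order $k$. But then, arguing as in Proposition \ref{prop9} and Proposition \ref{prop15}, one localizes $f_{y,S}$ near $h(y)$ by multiplying by a scaled bump function to produce a function agreeing with $f_{y,S}$ in a neighborhood of $h(y)$ (so that $Tf_{y,S}(y)$ is unchanged by Theorem \ref{thm10}) yet having arbitrarily small $\max_{0\le j\le\ell}\sup_{x\in L}\|D^j(\cdot)(x)\|$. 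The uniform estimate then forces $Tf_{y,S}(y)=0$, i.e.\ $\Phi_k(y,\cdot)=0$ for all $k>\ell$ and all $y\in Y$.

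The main obstacle is extracting the \emph{global, uniform} constants $L$ and $\ell$ from the boundedness hypothesis and verifying that they are independent of the target data. I would handle this carefully: the open set being absorbed by a neighborhood gives a bound by a single $\rho_{K,k}$ on its image; boundedness of the image under every $\rho_{L',\ell'}$ then yields, for each fixed output seminorm, a bound of the form in (\ref{eq4}), but one must confirm that the \emph{input} compact set and order (namely $L=K$ and $\ell=k$, coming from the absorbed neighborhood) can be taken uniform in the output data. The localization step in the $\Phi_k$ argument is routine once Lemma \ref{lem14} or the estimates of Proposition \ref{prop9} are invoked, so the delicate part is genuinely this quantifier bookkeeping on the domain side.
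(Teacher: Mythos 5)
Your proposal is correct and follows essentially the same route as the paper: reduce the boundedness hypothesis to a single controlling seminorm $\rho_{L,\ell}$ (an open set contains a translate of a basic neighborhood $\{f:\rho_{L,\ell}(f)<\epsilon\}$, and translating and rescaling gives the uniform $L$ and $\ell$ you were worried about), then observe that any $f$ vanishing near $L$ must have $Tf=0$, which forces $h(Y)\subseteq L$ via a localized witness of nontriviality, and finally annihilate $\Phi_k$ for $k>\ell$ by localizing $S(x-h(y))^k$, whose $\ell$-jet at $h(y)$ vanishes. The only cosmetic difference is that the paper disposes of the second assertion trivially for $p<\infty$ by taking $\ell=p$ and runs the localization (via Lemma \ref{lem14}/Proposition \ref{prop15}) only when $p=\infty$, whereas your direct argument handles both cases uniformly.
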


\begin{proof}
The map $T$ is continuous and hence we have the representation of $T$ given by Theorem \ref{thm10}. By the assumption, there exist $\ell \in \N_p$ and a compact set $L$ in $X$ so that the set $\{f: \rho_{L,\ell}(f)< 1\}$ is mapped onto a bounded set.
Suppose that there exists $y_0 \in Y$ with $h(y_0)\notin L$. Choose $r > 0$ so that $B(h(y_0),2r) \cap L = \emptyset$. Denoting by $\vp$ the assumed bump function on $G$, for any $k \in \N_p$ and any $S \in \cS^k(G,E)$, the function $f \in C^p(X,E)$ given by $f(x) = \vp(\frac{1}{r}(x-h(y_0))S(x-h(y_0))^k$ is identically $0$ on a neighborhood of $L$. Thus $\rho_{L,\ell}(mf) = 0$ for any $m \in \N$. Hence the sequence $(mTf)^\infty_{m=1}$ is bounded in the topology on $C^q(Y,F)$.  Therefore, $Tf = 0$. In particular, using the representation of $T$, $k!\Phi_k(y_0,S) = Tf(y_0) = 0$. This shows that $\Phi_k(y_0,\cdot) = 0$ for all $k \in \N_p$, contrary to the assumption that $T$ is non-trivial at the point $y_0$.  We have shown that $h(Y)$ is contained in the compact set $L$, and hence is relatively compact.

If $p < \infty$, then the second assertion of the proposition is obvious, since we may take $\ell = p$. If $p = \infty$, take $\ell$ as in the previous paragraph. Let $y_0 \in Y$. By the proof of Proposition \ref{prop15}, if $D^jf(h(y_0)) = 0$, $0\leq j \leq \ell$, then $D^m(Tf)(y_0) = 0$, $m \in \N$. In particular, $Tf(y_0) = 0$.  Given $k > \ell$ and $S \in \cS^k(G,E)$, consider the function $f(x) = S(x-h(y_0))^k$. By the above, $Tf(y_0)= 0$. Thus $k!\Phi_k(y_0,S) = 0$. This proves that $\Phi_k = 0$ for all $k > \ell$.
\end{proof}

Corollary \ref{cor13}, Theorems \ref{thm18} and \ref{thm23} and Proposition \ref{prop24} provide information on the support map of $T$ in various cases.  We now consider how (weak) compactness of $T$ informs on the maps $\Phi_k$ in the representation of $T$. The key observation is given in the next proposition.

\begin{prop}\label{prop25}
Define the map $J: C^q(Y,F) \to \prod_{K,m}C(K,\cS^m(H,F))$ by $Jf =
(D^mf_{|K})_{K,m}$, where the product is taken over all compact
subsets $K$ of $Y$ and all $m \in \N_q$. Then $J$
is a linear homeomorphic embedding with closed range.  In
particular, a subset $V$ of $C^q(Y,F)$ is relatively compact,
respectively, relatively weakly compact, if and only if $JV$ is
relatively compact, respectively, relatively weakly compact.
\end{prop}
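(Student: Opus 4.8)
The plan is to verify the embedding property by directly comparing the defining seminorms of the two topologies, and then to establish closedness of the range through a Fundamental-Theorem-of-Calculus argument that upgrades uniform convergence of derivatives to genuine differentiability of the limit. Linearity of $J$ is immediate from the linearity of each $D^m$ and of restriction. Give $P := \prod_{K,m} C(K,\cS^m(H,F))$ its product topology, generated by the seminorms $p_{K,m}\bigl((g_{K',m'})\bigr) = \sup_{y\in K}\|g_{K,m}(y)\|$. For $f \in C^q(Y,F)$ one has $p_{K,m}(Jf) = \sup_{y\in K}\|D^mf(y)\| \leq \rho_{K,m}(f)$, while conversely $\rho_{K,\ell}(f) = \max_{0\le m\le \ell} p_{K,m}(Jf)$ (recall that the seminorms (\ref{seminorms}) generate the topology of $C^q(Y,F)$, and each $p_{K,m}$ with $m \le \ell \le q$ is a factor of $P$). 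These two inequalities show simultaneously that $J$ is continuous and that $J^{-1}$ is continuous on the range, so $J$ is a homeomorphism onto its image; injectivity follows from the $m=0$ coordinate, since $Jf=0$ forces $f_{|K}=0$ for every compact $K$, i.e.\ $f=0$.

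For closedness, take $(g_{K,m})$ in the closure of $\mathcal R := J(C^q(Y,F))$ in $P$. Since the index set of compact sets is uncountable and $P$ is not metrizable, this closure point need not be reached by a sequence, so choose a net $(f_\alpha)$ with $Jf_\alpha \to (g_{K,m})$; concretely, $D^m f_\alpha \to g_{K,m}$ uniformly on $K$ for every compact $K$ and every $m\in\N_q$. Because restriction $C(K_2,\cS^m(H,F))\to C(K_1,\cS^m(H,F))$ is continuous for $K_1\subseteq K_2$, the limits are consistent: $g_{K_2,m}|_{K_1} = g_{K_1,m}$. Hence $g_m(y) := g_{\{y\},m}(y)$ defines $g_m:Y\to\cS^m(H,F)$ with $g_m|_K = g_{K,m}$ for every compact $K$. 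Each $g_m$ is continuous: if $y_s \to y_0$ in $Y$, then $\{y_s\}\cup\{y_0\}$ is compact and $g_m$ agrees there with the continuous $g_{\{y_s\}\cup\{y_0\},m}$, so $g_m(y_s)\to g_m(y_0)$; as $Y$ is metric this yields continuity.

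It remains to show $g_m = D^m g_0$. Fix $y_0\in Y$ and $y$ with $[y_0,y]\subseteq Y$; the segment $[y_0,y]$ is compact (this is the substitute for a compact ball, which is unavailable in infinite dimensions), so $Df_\alpha \to g_1$ uniformly on it, and $f_\alpha(y_0)\to g_0(y_0)$, $f_\alpha(y)\to g_0(y)$ from the coordinates at $\{y_0\}$ and $\{y\}$. Passing to the limit in $f_\alpha(y) - f_\alpha(y_0) = \int_0^1 Df_\alpha(y_0+t(y-y_0))(y-y_0)\,dt$, where uniform convergence on $[y_0,y]$ lets the limit pass through the integral, yields $g_0(y)-g_0(y_0) = \int_0^1 g_1(y_0+t(y-y_0))(y-y_0)\,dt$. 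Continuity of $g_1$ then gives $Dg_0(y_0)=g_1(y_0)$ via the estimate $\|g_0(y)-g_0(y_0)-g_1(y_0)(y-y_0)\|\le \sup_{t}\|g_1(y_0+t(y-y_0))-g_1(y_0)\|\,\|y-y_0\|$. The same argument applied to the maps $D^m f_\alpha$ shows $Dg_m = g_{m+1}$ whenever $m+1\in\N_q$; inducting, $g_0\in C^q(Y,F)$ with $D^m g_0 = g_m$, so $Jg_0 = (g_{K,m})$ and $\mathcal R$ is closed. I expect this differentiation step to be the main obstacle, precisely because convergence is only in the sup-topology of the derivatives and must be converted into differentiability of the limit; the two points requiring care are working with nets rather than sequences, and using compact segments in place of compact balls.

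Finally, since $J$ is a linear homeomorphism onto the closed subspace $\mathcal R$, relative compactness transfers: $V\subseteq C^q(Y,F)$ is relatively compact iff $JV$ is relatively compact in $\mathcal R$, and because $\mathcal R$ is closed in $P$ this coincides with relative compactness in $P$. For weak compactness, $\mathcal R$ is a closed linear subspace, hence convex and closed, hence weakly closed in $P$; moreover a linear homeomorphism of locally convex spaces is also a homeomorphism for the weak topologies, and by Hahn--Banach the weak topology of the subspace $\mathcal R$ agrees with the topology it inherits from the weak topology of $P$. Consequently $V$ is relatively weakly compact iff $JV$ is relatively weakly compact in $\mathcal R$, equivalently in $P$.
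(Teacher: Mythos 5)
Your proposal is correct and follows essentially the same route as the paper's proof: a net argument defining the pointwise limits $g_m$, continuity of $g_m$ via compact sets of the form $\{y_s\}\cup\{y_0\}$, and the fundamental theorem of calculus on segments to show $Dg_m=g_{m+1}$, with the weak-compactness transfer coming from convexity (hence weak closedness) of the range. The only differences are that you spell out the seminorm comparison and the weak-topology bookkeeping that the paper leaves as ``clear'' or ``easy.''
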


\begin{proof}
It is clear that $J$ is a linear homeomorphic embedding.  If $J$ has
closed ($=$ weakly closed, because of convexity) range, then the final statement of the
proposition follows easily.

Suppose that $(f_\al)$ is a net in $C^q(Y,F)$ so that $(Jf_\al)$
converges. In particular, for each $y \in Y$, and each $m$,
$(D^mf_\al(y))_\al$ converges to some $g_m(y)$. We claim that the
function $g_m:Y \to \cS^m(H,F)$ is continuous.  Indeed, let $(y_k)$
be a sequence in $Y$ converging to some $y_0 \in Y$.  Then $K =
\{y_k: k \in \N \cup\{0\}\}$ is a compact subset of $Y$.  Hence
$(D^mf_{\al|K})_\al$ converges in $C(K,\cS^m(H,F))$.  The limit must
be $g_{m|K}$.  Thus, $g_{m|K}$ is continuous on $K$.  In particular,
$\lim_kg_m(y_k) = g_m(y_0)$.  This proves that $g_m$ is continuous
at any $y_0 \in Y$.  Next, we show that $Dg_m = g_{m+1}$ if $0 \leq
m < q$.  Fix $y_0 \in Y$ and let $V$ be a convex open neighborhood
of $y_0$ in $Y$. For all $y \in V$ and all $\al$,
\[ D^mf_\al(y) - D^mf_\al(y_0) = \int^1_0D^{m+1}f_\al((1-t)y_0+ty)(y-y_0)\,dt.\]
Since $(D^mf_\al)_\al$ and $(D^{m+1}f_\al)_\al$ converge uniformly
on the segment $[y_0,y]$ to $g_m$ and $g_{m+1}$ respectively, we
have
\[ g_m(y) - g_m(y_0) = \int^1_0g_{m+1}((1-t)y_0+ty)(y-y_0)\,dt.\]
Hence
\[ \frac{\|g_m(y) - g_m(y_0)-g_{m+1}(y_0)(y-y_0)\|}{\|y-y_0\|} \leq \sup_{s \in [y_0,y]}\|g_{m+1}(s)-g_{m+1}(y_0)\|.
\]
Because of the continuity of $g_{m+1}$, we find that $Dg_m(y_0) =
g_{m+1}(y_0)$, as desired.  It now follows easily that $D^mg_0 =
g_m$ and so $g_0 \in C^q(Y,F)$.  For each compact subset $K$ of $Y$
and each $m$, $(D^mf_{\al|K})_\al$ converges in $C(K,\cS^m(H,F))$ by
assumption, and the limit must be $g_{m|K} = D^mg_{0|K}$. Therefore,
$(Jf_\al)_\al$ converges to $Jg_0$.  This proves that $J$ has closed
range.
\end{proof}

\begin{thm}\label{thm26}
Assume that $1 \leq p \leq  q \leq \infty$. Let $T: C^p(X,E)\to C^q(Y,F)$ be a compact, separating, nowhere trivial linear operator with support map $h$. Then there exist $\ell\in \N_p$, functions $\Phi_k: Y\times \cS^k(G,E) \to F$, $0 \leq k \leq \ell$, a relatively compact set $(x_\al)$ in $X$ and a partition $(Y_\al)$ of $Y$ into clopen subsets of $Y$ so that
\begin{equation}\label{eq7}
Tf(y) = \sum^{\ell}_{k=0}\Phi_k(y, D^kf(x_\al))
\end{equation}
for all $f\in C^p(X,E)$ and all $y \in Y_\al$. For a given $y$, $\Phi_k(y,\cdot)$ is a bounded linear operator from $\cS^k(G,E)$ into $F$.  For each $S \in \cS^k(G,E)$, $\Phi_k(y,S)$ belongs to $C^q(Y,F)$ as a function of $y$.  Moreover, for any $k \leq \ell$,
\begin{enumerate}
\item[(a)] For any $y \in Y$ and $m \in \N_q$, the map $D_y^m\Phi_k(y,\cdot): \cS^k(G,E) \to \cS^m(H,F)$ is a compact operator ($D_y$ denotes the derivative with respect to $y$);
\item[(b)] For each $m \in \N_q$, the map $y\mapsto D_y^m\Phi_k(y,\cdot)$ from $Y$ into the space of bounded linear operators $L(\cS^k(G,E), \cS^m(H,F))$ is continuous with respect to the operator norm topology.
\end{enumerate}
Conversely, any operator $T$ satisfying all of the above is a compact, separating linear operator from $C^p(X,E)$ into $C^q(Y,F)$.
\end{thm}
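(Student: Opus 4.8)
The plan is to prove the forward direction (the representation together with (a), (b)) and the converse in turn, using the degeneracy theorems of the previous sections and Proposition \ref{prop25} as the main engines.

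First I would establish the representation (\ref{eq7}). Since $T$ is compact it maps an open set onto a relatively compact, hence bounded, set, so Proposition \ref{prop24} supplies $\ell \in \N_p$ with $\Phi_k = 0$ for $k > \ell$ and shows that $h(Y)$ is relatively compact in $X$. Under the standing hypothesis $p \le q$ the support map is locally constant: if $q > p$ this is Corollary \ref{cor13}; if $p = q = \infty$, compactness gives that $T$ maps an open set onto a bounded set, so Theorem \ref{thm18} applies; and if $p = q < \infty$, compactness implies weak compactness, so Theorem \ref{thm23} applies. In every case we obtain a partition of $Y$ into clopen sets $(Y_\al)$ on which $h$ takes a constant value $x_\al$, and $(x_\al) \subseteq \overline{h(Y)}$ is relatively compact. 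Substituting $h(y) = x_\al$ into (\ref{eq1.5}) and truncating at $\ell$ gives (\ref{eq7}), while boundedness and linearity of $\Phi_k(y,\cdot)$ are part of Theorem \ref{thm10}. For the $C^q$-regularity of $\Phi_k(\cdot, S)$, I would fix $\al$, set $f_S(x) = S(x-x_\al)^k/k!$, and observe that $D^jf_S(x_\al)$ equals $S$ for $j = k$ and $0$ otherwise, so $\Phi_k(y,S) = Tf_S(y)$ for $y \in Y_\al$; thus $\Phi_k(\cdot, S)$ agrees with the $C^q$ function $Tf_S$ on the open set $Y_\al$, and since the $Y_\al$ are clopen and cover $Y$, $\Phi_k(\cdot, S) \in C^q(Y,F)$.

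The heart of the forward direction is properties (a) and (b), where the single compactness hypothesis on $T$ must be converted into information on the individual coefficient operators. Enlarging the compact set and the order if necessary, choose a compact $L \subseteq X$ containing $\overline{\{x_\al\}}$ so that, after rescaling, $B = \{f : \rho_{L,\ell}(f) \le 1\}$ has relatively compact image $T(B)$. By Proposition \ref{prop25}, $J(T(B))$ is relatively compact, i.e.\ for every compact $K \subseteq Y$ and every $m \in \N_q$ the family $\{D^m(Tf)_{|K} : f \in B\}$ is relatively compact in $C(K, \cS^m(H,F))$. A direct estimate on $f_S$ yields a finite constant $C_0$, independent of $\al$ and of $k \le \ell$, with $\rho_{L,\ell}(f_S) \le C_0\|S\|$, so $f_S \in B$ whenever $\|S\| \le 1/C_0$; moreover on $Y_\al$ one has $D^m(Tf_S)(y) = D_y^m\Phi_k(y, S)$. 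For (a), take $K = \{y_0\}$: the set $\{D_y^m\Phi_k(y_0,S) : \|S\| \le 1/C_0\}$ lies in the relatively compact set $\{D^m(Tf)(y_0) : f \in B\}$, which by linearity means $D_y^m\Phi_k(y_0,\cdot)$ is a compact operator. For (b), given $y_s \to y_0$ in $Y_\al$, apply the relative compactness over the compact set $K = \{y_s\} \cup \{y_0\}$: relative compactness in $C(K, \cS^m(H,F))$ forces equicontinuity, so $\sup_{\|S\| \le 1/C_0}\|D^m(Tf_S)(y_s) - D^m(Tf_S)(y_0)\| \to 0$, i.e.\ $y \mapsto D_y^m\Phi_k(y,\cdot)$ is operator-norm continuous on $Y_\al$, and clopen gluing gives continuity on all of $Y$.

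For the converse, linearity is immediate, and $Tf$ is $C^q$ because on each clopen $Y_\al$ it is the finite sum $\sum_{k\le\ell}\Phi_k(\cdot, D^kf(x_\al))$ of $C^q$ functions, so $C^q$-regularity, being local, holds on $Y$. To see that $T$ is separating, suppose $f \perp g$; then $C(f)$ and $C(g)$ are disjoint open sets, so $g$ vanishes identically on the open set $C(f)$, whence $D^kg \equiv 0$ on $\overline{C(f)}$ by continuity, and symmetrically $D^kf \equiv 0$ on $\overline{C(g)}$. Fix $\al$: if $x_\al \in \overline{C(f)}$ then $D^kg(x_\al) = 0$ for $k \le \ell$, so $Tg \equiv 0$ on $Y_\al$, while if $x_\al \notin \overline{C(f)}$ then $f$ vanishes near $x_\al$, so $Tf \equiv 0$ on $Y_\al$; either way one of $Tf, Tg$ vanishes throughout $Y_\al$, and as the $Y_\al$ cover $Y$ we get $Tf \perp Tg$. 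Finally, to show compactness I would use Proposition \ref{prop25} with $L = \overline{\{x_\al\}}$: it suffices that $\mathcal F_{K,m} = \{D^m(Tf)_{|K} : \rho_{L,\ell}(f) \le 1\}$ be relatively compact in $C(K, \cS^m(H,F))$ for each compact $K$ and each $m \in \N_q$. On $Y_\al \cap K$ one has $D^m(Tf)(y) = \sum_{k\le\ell}(D_y^m\Phi_k(y,\cdot))(D^kf(x_\al))$ with $\|D^kf(x_\al)\| \le 1$; pointwise relative compactness follows from (a) since compact operators send the unit ball to relatively compact sets and finite Minkowski sums of relatively compact sets are relatively compact, and equicontinuity follows from (b) because $\sum_{k\le\ell}\|D_y^m\Phi_k(y,\cdot) - D_y^m\Phi_k(y_0,\cdot)\|$ (operator norm) bounds $\|D^m(Tf)(y) - D^m(Tf)(y_0)\|$ uniformly over the unit ball of $\rho_{L,\ell}$ and tends to $0$ as $y \to y_0$. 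Vector-valued Arzel\`a--Ascoli then gives relative compactness of $\mathcal F_{K,m}$. I expect the \emph{main obstacle} to be properties (a) and (b) in the forward direction: extracting compactness and norm-continuity of the infinitely many coefficient operators $D_y^m\Phi_k(y,\cdot)$ from the one compactness statement about $T$ requires the test functions $f_S$ with tightly controlled seminorms and the use of Proposition \ref{prop25} over both point-masses and convergent sequences, whereas the converse is largely an assembly of these ingredients through Arzel\`a--Ascoli.
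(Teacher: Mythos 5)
Your proposal is correct and follows essentially the same route as the paper's own proof: Proposition \ref{prop24} combined with Corollary \ref{cor13}, Theorem \ref{thm18} or Theorem \ref{thm23} (noting compact $\Rightarrow$ bounded image/weakly compact) for the representation (\ref{eq7}), the test functions $f_S(x)=S(x-x_\al)^k$ together with Proposition \ref{prop25} and vector-valued Arzel\`a--Ascoli over singleton and convergent-sequence compacta for properties (a) and (b), and the same Arzel\`a--Ascoli plus Proposition \ref{prop25} assembly for compactness in the converse. Your only real addition is the explicit check that an operator of the form (\ref{eq7}) is separating, which the paper treats as routine and omits.
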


\begin{proof}
Suppose that $T$ is a compact, separating, nowhere trivial linear operator. By Theorem \ref{thm10}, $T$ has the representation given by (\ref{eq1.5}). By Corollary \ref{cor13}, Theorem \ref{thm18} or Theorem \ref{thm23}, there is a partition $(Y_\al)$ of $Y$ into clopen subsets so that the support map $h$ is constant on each $Y_\al$. Denote the value of $h$ on $Y_\al$ by $x_\al$. By Proposition \ref{prop24}, the set $(x_\al)$ is relatively compact in $X$ and there exists $\ell \in \N_p$ such that $\Phi_k = 0$ for all $k > \ell$. Hence we obtain the representation (\ref{eq7}) of $T$.
Given $k\leq \ell$, $S\in \cS^k(G,E)$ and $\al$, let $f(x)=S(x-x_\al)^k$. Then $Tf(y) = k!\Phi_k(y,S)$ for all $y \in Y_\al$. Since $Y_\al$ is open, this proves that $\Phi_k(y,S)$ is $C^q$ as a function of $y \in Y_\al$. As this applies to all $\al$, $\Phi_k(y,S)$ belongs to $C^q(Y,F)$ as a function of $y$.

From compactness of $T$, we obtain $i \in \N_p$ and a compact set $L$ in $X$ so that $\{Tf: \rho_{L,i}(f)<1\}$ is relatively compact in $C^q(Y,F)$. By Proposition \ref{prop25}, $\{D^m(Tf)_{|K}: \rho_{L,i}(f)<1\}$ is relatively compact in $C(K,\cS^m(H,F))$ for each $m \in \N_q$ and each compact set $K$ in $Y$. By Arzel\`{a}-Ascoli's Theorem \cite[Theorem IV.6.7]{DS},  $\{D^m(Tf)(y): \rho_{L,i}(f)<1\}$ is relatively compact in $\cS^m(H,F)$ for each $y \in K$ and $\{D^m(Tf)_{|K}: \rho_{L,i}(f)<1\}$ is equicontinuous on $K$.
Given $k \leq \ell$ and $\al$, define $f_S(x) = S(x-x_\al)^k$ for all $S \in \cS^k(G,E)$. Since $L$ is compact, there exists $\ep > 0$ so that $\rho_{L,i}(f_S) < 1$ whenever $\|S\| \leq \ep$. Let $m \in \N_q$.
By direct computation, $D^m(Tf_S)(y) = k!D^m_y\Phi_k(y,S)$ for all $y \in Y_\al$.  Choose $K$ to be any singleton $\{y\}$ in $Y_\al$.
Then $\{k!D^m_y\Phi_k(y,S): \|S\| \leq \ep\}$ is relatively compact. Thus $D^m_y\Phi_k(y,\cdot)$ is a compact operator. On the other hand, choose $K$ to be any convergent sequence $(y_j)$ in $Y_\al$ together with the limit $y_0$. Since the set $\{D^m(Tf_S): \|S\| \leq \ep\}$ is equicontinuous on $K$,
\[ \lim_j\sup_{\|S\|\leq \ep}\|D^m_y\Phi_k(y_j,S) - D^m_y\Phi_k(y_0,S)\| = 0\]
This proves that the function $y\mapsto D^m_y\Phi_k(y,\cdot)$ is continuous with respect to the norm topology on $L(\cS^k(G,E), \cS^m(H,F))$.

In the converse direction, let $\ell$ be given by the statement of the theorem and $L$ be a compact set in $X$ containing $(x_\al)$. From conditions (a) and (b) and equation (\ref{eq7}), one easily deduces that for all $m \in \N_q$ and all compact sets $K$, $\{D^m(Tf)(y): \rho_{L,\ell}(f)<1\}$ is relatively compact in $\cS^m(H,F)$ for each $y \in K$ and $\{D^m(Tf)_{|K}: \rho_{L,\ell}(f)<1\}$ is equicontinuous on $K$. Hence $\{D^m(Tf)_{|K}: \rho_{L,\ell}(f)<1\}$ is relatively compact in $C(K,\cS^m(H,F))$ by
Arzel\`{a}-Ascoli's Theorem.
Relative compactness of the set  $\{Tf: \rho_{L,\ell}(f)<1\}$ follows from Proposition \ref{prop25}.
\end{proof}

Given a bounded linear operator $T$ mapping between Banach spaces, let $T^*$ denote its adjoint.  If $G_1$ and $G_2$ are Banach spaces, the {\em strong operator topology} on $L(G_1,G_2)$ is the topology generated by the seminorms $\rho_u(T) = \|Tu\|$, $u \in G_1$.

\begin{thm}\label{thm27}
Assume that $1 \leq p \leq q \leq \infty$. Let $T: C^p(X,E)\to C^q(Y,F)$ be a weakly compact, separating, nowhere trivial linear operator with support map $h$. Then there exist $\ell\in \N_p$, functions $\Phi_k: Y\times \cS^k(G,E) \to F$, $0 \leq k \leq \ell$, a relatively compact set $(x_\al)$ in $X$ and a partition $(Y_\al)$ of $Y$ into clopen subsets of $Y$ so that
\begin{equation*}
Tf(y) = \sum^{\ell}_{k=0}\Phi_k(y, D^kf(x_\al))
\end{equation*}
for all $f\in C^p(X,E)$ and all $y \in Y_\al$. For a given $y$, $\Phi_k(y,\cdot)$ is a bounded linear operator from $\cS^k(G,E)$ into $F$.  For each $S \in \cS^k(G,E)$, $\Phi_k(y,S)$ belongs to $C^q(Y,F)$ as a function of $y$.  Moreover, for any $k \leq \ell$,
\begin{enumerate}
\item[(c)] For any $y \in Y$ and $m \in \N_q$, the map $D_y^m\Phi_k(y,\cdot): \cS^k(G,E) \to \cS^m(H,F)$ is a weakly compact operator;
\item[(d)] For each $m \in \N_q$, the map $y\mapsto (D_y^m\Phi_k(y,\cdot))^{**}$ from $Y$ into the space of bounded linear operators $L(\cS^k(G,E)^{**}, \cS^m(H,F)^{**})$ is continuous with respect to the strong operator topology.
\end{enumerate}
Conversely, any operator $T$ satisfying all of the above is a weakly compact, separating linear operator from $C^p(X,E)$ into $C^q(Y,F)$.
\end{thm}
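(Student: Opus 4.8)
The plan is to follow the architecture of the proof of Theorem \ref{thm26} as far as possible, replacing ``compact'' by ``weakly compact'' and the Arzel\`{a}--Ascoli step by a bidual argument. First I would reproduce the opening of that proof verbatim: Theorem \ref{thm10} supplies the representation $(\ref{eq1.5})$; since a relatively weakly compact set is bounded, $T$ maps an open set onto a bounded set, so one of Corollary \ref{cor13}, Theorem \ref{thm18} or Theorem \ref{thm23} (according as $q>p$, $p=q=\infty$, or $p=q<\infty$) yields the clopen partition $(Y_\al)$ on which $h$ is constant, with values $x_\al$; and Proposition \ref{prop24} makes $(x_\al)$ relatively compact and produces the finite $\ell$ with $\Phi_k=0$ for $k>\ell$, giving $(\ref{eq7})$. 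Taking $f(x)=S(x-x_\al)^k$ shows, exactly as before, that each $\Phi_k(\cdot,S)$ lies in $C^q(Y,F)$. This disposes of everything except (c) and (d).

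For (c), weak compactness of $T$ gives $i\in\N_p$ and a compact $L\subseteq X$ with $\{Tf:\rho_{L,i}(f)<1\}$ relatively weakly compact; by Proposition \ref{prop25} the sets $\{D^m(Tf)_{|K}:\rho_{L,i}(f)<1\}$ are relatively weakly compact in $C(K,\cS^m(H,F))$ for every $m\in\N_q$ and every compact $K\subseteq Y$. Since evaluation at a point is a bounded, hence weakly continuous, linear map, specialising $K$ to a singleton $\{y\}\subseteq Y_\al$ and taking $f=f_S$ with $\|S\|\le\ep$ (chosen so that $\rho_{L,i}(f_S)<1$) makes $\{D_y^m\Phi_k(y,S):\|S\|\le\ep\}$ relatively weakly compact; that is, $D_y^m\Phi_k(y,\cdot)$ is weakly compact.

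The step (d) is where the argument genuinely departs from the compact case, and I expect it to be the main obstacle: relatively weakly compact subsets of $C(K)$ need not be equicontinuous, so the uniform-in-$S$ estimate used for norm continuity in Theorem \ref{thm26} is unavailable, and this is precisely why (d) is phrased through biduals and the strong operator topology. Fix a compact $K\subseteq Y_\al$ and define $R_m\colon\cS^k(G,E)\to C(K,\cS^m(H,F))$ by $R_mS=\frac{1}{k!}D^m(Tf_S)_{|K}$, so that $(R_mS)(y)=D_y^m\Phi_k(y,\cdot)S$. Since $R_m$ carries the $\ep$-ball into a scalar multiple of the relatively weakly compact set above, $R_m$ is weakly compact, so by Gantmacher's theorem $R_m^{**}$ maps $\cS^k(G,E)^{**}$ into $C(K,\cS^m(H,F))$ itself. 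Because evaluation commutes with taking biduals, $(R_m^{**}\zeta)(y)$ is exactly $(D_y^m\Phi_k(y,\cdot))^{**}\zeta$ for every $\zeta$; as $R_m^{**}\zeta$ is a genuine continuous $\cS^m(H,F)$-valued function on $K$, the map $y\mapsto(D_y^m\Phi_k(y,\cdot))^{**}\zeta$ is norm continuous on $K$. Letting $K$ run over all convergent sequences together with their limits in the open set $Y_\al$ gives continuity on $Y_\al$, hence on $Y$, in the strong operator topology, which is (d).

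For the converse I would again reduce via Proposition \ref{prop25} to showing that each $\{D^m(Tf)_{|K}:\rho_{L,\ell}(f)<1\}$ is relatively weakly compact in $C(K,\cS^m(H,F))$, where $L$ is a compact set containing the relatively compact family $(x_\al)$. A compact $K$ meets only finitely many of the clopen sets $Y_\al$, so $C(K,\cS^m(H,F))$ splits as a finite product and I may assume $K\subseteq Y_\al$; there $D^m(Tf)(y)=\sum_{k=0}^{\ell}(D_y^m\Phi_k(y,\cdot))D^kf(x_\al)$ with $\|D^kf(x_\al)\|<1$ for $k\le\ell$, so (finite sums of relatively weakly compact sets being relatively weakly compact) it is enough that each $R_{m,k}\colon S\mapsto\big(y\mapsto(D_y^m\Phi_k(y,\cdot))S\big)$, from $\cS^k(G,E)$ into $C(K,\cS^m(H,F))$, be weakly compact. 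By Gantmacher it suffices that $R_{m,k}^{**}\zeta\in C(K,\cS^m(H,F))$ for all $\zeta$: condition (c) (via Gantmacher) puts $(D_y^m\Phi_k(y,\cdot))^{**}\zeta$ in $\cS^m(H,F)$ for each $y$, and (d) makes $g_\zeta(y)=(D_y^m\Phi_k(y,\cdot))^{**}\zeta$ norm continuous, so $g_\zeta$ lies in $C(K,\cS^m(H,F))$. Identifying $R_{m,k}^{**}\zeta$ with $g_\zeta$ requires one limiting argument---approximate $\zeta$ weak$^*$ by a bounded net $(S_\nu)$ from $\cS^k(G,E)$ via Goldstine, note that $R_{m,k}S_\nu=g_{S_\nu}\to g_\zeta$ pointwise weakly and boundedly, and pass this through each measure in $C(K,\cS^m(H,F))^*$ by dominated convergence---and this dominated-convergence step for vector measures is the only delicate point of the converse.
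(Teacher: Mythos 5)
Your forward direction is correct, and your route to (d) is a genuinely different (and arguably cleaner) one than the paper's: where the paper embeds $C(K,\cS^m(H,F))$ isometrically into $C(K\times B)$ (with $B$ the dual unit ball in its weak$^*$ topology) and uses the Dunford--Schwartz pointwise-closure characterization of weak compactness to show that $(y,\gamma)\mapsto (R_y^{**}S)(\gamma)$ is jointly continuous, you package the family into a single weakly compact operator $R_m$ into $C(K,\cS^m(H,F))$ and extract both (c) and (d) from Gantmacher's theorem together with the intertwining $\delta_y^{**}\circ R_m^{**}=R_{m,y}^{**}$. That argument is sound, since $R_m^{**}\zeta$ lands in $C(K,\cS^m(H,F))$ itself and evaluation of an element of the subspace $C(K,\cS^m(H,F))\subseteq C(K,\cS^m(H,F))^{**}$ under $\delta_y^{**}$ is ordinary evaluation.

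The converse, however, has a genuine gap exactly where you flag ``dominated convergence for vector measures.'' Dominated convergence is a theorem about \emph{sequences}; for the bounded \emph{net} $(S_\nu)$ produced by Goldstine it fails outright (the net of indicator functions of finite subsets of $[0,1]$ converges boundedly and pointwise to $1$ while all its integrals vanish), and you cannot replace the net by a sequence since bounded sets in $\cS^k(G,E)^{**}$ need not be weak$^*$ metrizable. Nor can you sidestep the identification: knowing that $\delta_y^{**}(R_{m,k}^{**}\zeta)=g_\zeta(y)\in\cS^m(H,F)$ for every $y$ and that $g_\zeta$ is continuous does \emph{not} force $R_{m,k}^{**}\zeta=g_\zeta$, because distinct elements of $C(K,\cS^m(H,F))^{**}$ can have identical point evaluations (already for $Z=\R$, the functional $\mu\mapsto\mu(\{y_0\})$ on $M(K)$ lies in $C(K)^{**}\setminus C(K)$ and evaluates at every $\delta_y$). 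So the step identifying $R_{m,k}^{**}\zeta$ with $g_\zeta$ is unproved, and with it the weak compactness of $R_{m,k}$. The paper's proof avoids computing any bidual image in the converse: it applies \cite[Theorem IV.6.14]{DS} directly to the set $\{ID^m(Tf)_{|K}:\rho_{L,\ell}(f)<1\}$ in $C(K\times B)$, so that one only has to check that every \emph{pointwise} limit of a net from this set is continuous on $K\times B$ --- and that is precisely what conditions (c) and (d) deliver (weak$^*$ convergence of $D^kf_\beta(x_\al)$ to $S_k$, weak compactness of $R_{k,y}$ to place $R_{k,y}^{**}S_k$ in $\cS^m(H,F)$, and (d) for uniformity in $\gamma$). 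That pointwise-closure criterion is the tool that converts net-pointwise information into weak compactness without ever integrating a net; your converse needs it (or an equivalent substitute) in place of the dominated convergence step.
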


\begin{proof}
Following the proof of Theorem \ref{thm26}, it suffices to show that an operator $T$ of the form (\ref{eq7}) is weakly compact if and only if conditions (c) and (d) hold.
Let $J$ be the map defined in Proposition \ref{prop25}.  By that result, a set $V$ is relatively weakly compact in $C^q(Y,F)$ if and only if $JV$ is relatively weakly compact in $\prod_{K,m} C(K,\cS^m(H,F))$. By \cite[Theorem IV.4.3]{S}, the weak topology on $\prod_{K,m} C(K,\cS^m(H,F))$ is the product of the weak topologies on $C(K,\cS^m(H,F))$.  Thus, by Tychonoff's Theorem, $V$ is relatively weakly compact if and only if $\{D^mf_{|K}: f \in V\}$ is relatively weakly compact in $C(K,\cS^m(H,F))$ for every $m \in \N_q$, and every compact subset $K$ of $Y$.

Assume that $T$ is weakly compact.
There exist a compact subset ${L}$ of $X$ and $i \in \N_p$, so that$\{Tf: \rho_{L,i}(f)<1\}$ is relatively weakly compact.
Given $k \leq \ell$ and $\al$, define $f_S(x) = S(x-x_\al)^k$ for all $S \in \cS^k(G,E)$. Since $L$ is compact, there exists $\ep > 0$ so that $\rho_{L,i}(f_S) < 1$ whenever $\|S\| \leq \ep$.
Choose $K$ to be any singleton $\{y\}$ in $Y_\al$. Then
for each $m \in \N_q$,
\[ \{D^m(Tf_{S})(y): \|S\| \leq \ep\} = \{k!D^m_y\Phi_k(y,S): \|S\| \leq \ep\}\]
is relatively weakly compact in $C(\{y\}, \cS^m(H,F)) = \cS^m(H,F)$. Hence condition (c) holds. On the other hand, choose $K$ to be any convergent sequence $(y_j)$ in $Y_\al$ together with the limit $y_0$.  Let $m \in \N_q$ and denote by $B$ the closed unit ball of $(\cS^m(H,F))^*$, endowed with the weak$^*$ topology, which renders it into a compact Hausdorff space. The map $I: C(K, \cS^m(H,F)) \to C(K\times B)$ given by $Ig(y,\gamma) = \gamma(g(y))$ is a linear isometric embedding. Set $W = \{D^m(Tf_S)_{|K}: \|S\| \leq \ep\}$.
Then $IW$ is relatively weakly compact in $C(K\times B)$. By \cite[Theorem IV.6.14]{DS}, the closure of $IW$ in the pointwise topology is contained in $C(K\times B)$.  Denote by $R_y$ the operator $D^m_y\Phi_k(y,\cdot)$.
Suppose that $S \in \cS^k(G,E)^{**}$ with $\|S\| \leq \ep$. Define $g_{S}: K \times B \to \R$ by $g_{S}(y,\gamma) = (R_y^{**}S)(\gamma)$. There exists a net $(S_\beta)$ in $\cS^k(G,E)$, $\|S_\beta\| \leq \ep$, that converges to $S$ in the weak$^*$ topology.  Since $R_y(S_\beta) = D^m(Tf_{S_\beta/k!})(y)$, the functions $(y,\gamma)\mapsto \gamma(R_yS_\beta)$ belong to $IW \subseteq C(K\times B)$ and converge pointwise to $g_{S}$.  Thus $g_{S}$ is continuous on $K \times B$.  Therefore, $\lim_{j}g_{S}(y_j,\gamma) = g_{S}(y_0,\gamma)$ uniformly for $\gamma \in B$, which means that $\lim_jR^{**}_{y_j}S = R^{**}_{y_0}S$ in the norm of $(\cS^m(H,F))^{**}$. This proves condition (d).

In the converse direction, let $\ell$ be given by the statement of the theorem and $L$ be a compact set in $X$ containing $(x_\al)$.
We claim that $\{Tf: \rho_{L,\ell}(f) < 1\}$ is relatively weakly compact in $C^q(Y,F)$. By the discussion in the first paragraph of the proof and using the isometric embedding $I$ as above, it suffices to show for that each $m \in \N_q$, and each compact subset $K$ of some $Y_\al$, the set $\{ID^m(Tf)_{|K}: \rho_{L,\ell}(f) < 1 \}$ is relatively weakly compact in $C(K\times B)$. By  \cite[Theorem IV.6.14]{DS}, we only need to show that the pointwise closure of this set is contained in $C(K\times B)$.
Let $(f_\beta)$ be a net with $\rho_{L,\ell}(f_\beta) < 1$ for all $\beta$ and assume that $(ID^m(Tf_\beta)_{|K})_\beta$ converges pointwise to a function $g$ on $K \times B$.
For $0 \leq k \leq \ell$, $\|D^kf_\beta(x_\al)\| < 1$. Thus, we may assume that $(D^kf_\beta(x_\al))_\beta$ converges to some $S_k \in (\cS^k(G,E))^{**}$ in the weak$^*$ topology.
Denote by $R_{k,y}$ the operator $D^m_y\Phi_k(y,\cdot)$.
By condition (c), $R_{k,y}$ is a weakly compact operator for each $y$. Hence $R^{**}_{k,y}S_k \in \cS^m(H_,F)$ and $\lim_\beta R_{k,y}(D^kf_\beta(x_\al)) = R^{**}_{k,y}(S_k)$ weakly.
Therefore, for all $y \in K$ and all $\gamma \in B$,
\begin{align*}
g(y,\gamma) &= \lim_\beta\gamma(D^m(Tf_\beta)(y)) = \lim_\beta\sum^\ell_{k=0}\gamma(R_{k,y}(D^kf_\beta(x_\al)))\\ &= \sum^\ell_{k=0}\gamma(R_{k,y}^{**}(S_k)).
\end{align*}
It follows that, for each $y \in K$, $g(y,\cdot)$ is a continuous function on $B$.
By condition (d), if $(y_j)$ converges to $y$ in $K$, then $(\gamma(R_{k,y_j}^{**}S_k))_j$ converges  to $\gamma(R_{k,y}^{**}S_k)$
uniformly in $\gamma \in B$; that is, $g(y_j,\gamma)$ converges to $g(y,\gamma)$ uniformly in $\gamma \in B$.  Since, by the above, each $g(y_j,\cdot)$ is a continuous function on $B$, it follows easily that $g$ is a continuous function on $K \times B$.
\end{proof}

\section{The case $p > q$}

The example at the end of Section 4 tells us that when $p > q$, the support map of a compact, disjointness preserving linear operator $T: C^p(X,E) \to C^q(Y,F)$ need not be degenerate. Indeed, we shall see that the support map need not even be differentiable at all points of $Y$.  In this section, we will present the counterparts of Theorems \ref{thm26} and \ref{thm27} when $p > q$ and {\em $X$ is an open set in $\R^n$}.

As indicated, in this section, $X$ will be an open set in $\R^n$. A {\em multiindex} is an $n$-tuple of nonnegative integers $\lambda = (\lambda_1,\cdots, \lambda_n)$. The {\em order} of $\lambda$ is $|\lambda| = \sum^n_{i=1}\lambda_i$. If $x = (x_1,\dots, x_n) \in \R^n$ and $\lambda$ is a multiindex, let $x^\lambda = \prod^n_{i=1}x^{\lambda_i}$. Given a multiindex $\lambda$ and $u \in E$, let $S_{\lambda,u}$ be the operator in $\cS^{k}(\R^n,E)$, $k = |\lambda|$, so that $S_{\lambda,u}x^k = x^\lambda u$. The set of operators $\{S_{\lambda,u}: |\lambda| = k, u \in E\}$ span $\cS^k(\R^n,E)$. For the rest of the section, let  $T: C^p(X,E) \to C^q(Y,F)$ be given a continuous, separating, nowhere trivial linear operator. Denote by $h$ the support map. By Theorem \ref{thm10}, $T$ has the representation
\[ Tf(y) = \sum_k\Phi_k(y,D^kf(h(y)),\]
where the sum is locally finite.

\begin{prop}\label{prop28}
Suppose that $1 \leq i \leq n$. There is a dense open subset $U_i$ of $Y$ so that for all $y_0 \in U_i$, there exist $m_0 \geq 0$ and an open neighborhood $V$ of $y_0$ so that $\Phi_{|\lambda|}(y,S_{\lambda,u}) = 0$ if $y \in V$, $u \in E$ and  $\lambda_i > m_0$, and $\Phi_{|\mu|}(y_0,S_{\mu,u})\neq 0$ for some multiindex $\mu$ with $\mu_i = m_0$ and some $u\in E$.
\end{prop}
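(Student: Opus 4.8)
The plan is to stratify $Y$ by the largest exponent of the $i$-th coordinate that appears with a nonzero coefficient in the representation of $T$, and then to pass to the interiors of the strata to obtain an open dense set on which this largest exponent is locally attained. Fix $i$. For each integer $m \geq 0$ I would set
\[ A_m = \{y \in Y : \Phi_{|\lambda|}(y, S_{\lambda,u}) = 0 \text{ for all } u \in E \text{ and all } \lambda \text{ with } \lambda_i > m\}, \]
and put $A_{-1} = \emptyset$. Since each $\Phi_{|\lambda|}(\cdot,S_{\lambda,u})$ is continuous by Theorem \ref{thm10}, the complement of $A_m$ is a union of open sets, so each $A_m$ is closed in $Y$; clearly $A_{-1}\subseteq A_0 \subseteq A_1 \subseteq \cdots$. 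Using the local finiteness in Theorem \ref{thm10}, every $y_0$ has a ball $B(y_0,r)$ and an integer $k_0$ with $\Phi_k(y,\cdot) = 0$ for $k > k_0$ and $y \in B(y_0,r)$; as $\lambda_i > k_0$ forces $|\lambda| > k_0$, this ball lies in $A_{k_0}$, so that $Y = \bigcup_{m\geq 0}\inte(A_m)$.

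Next I would define
\[ U_i = \bigcup_{m\geq 0}\bigl(\inte(A_m)\setminus A_{m-1}\bigr). \]
Each summand is open (an open set minus a closed set), so $U_i$ is open. The heart of the argument is density, which I would establish as follows. Given a nonempty open $W \subseteq Y$, let $m^*$ be the least $m$ with $W \cap \inte(A_m) \neq \emptyset$; this exists because $Y = \bigcup_m \inte(A_m)$. Put $W' = W \cap \inte(A_{m^*})$. If $m^* = 0$ then $W' \subseteq \inte(A_0) = \inte(A_0)\setminus A_{-1} \subseteq U_i$. If $m^* \geq 1$ and $W'$ missed $U_i$, then $W'$ would have to lie in $A_{m^*-1}$; being open, $W'$ would then lie in $\inte(A_{m^*-1})$, contradicting the minimality of $m^*$. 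In either case $W \cap U_i \neq \emptyset$.

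It then remains to verify the two stated conditions for $y_0 \in U_i$. Choosing $m_0$ with $y_0 \in \inte(A_{m_0})\setminus A_{m_0-1}$ and taking $V = \inte(A_{m_0})$, the inclusion $V \subseteq A_{m_0}$ immediately gives $\Phi_{|\lambda|}(y,S_{\lambda,u}) = 0$ for $y \in V$, $u \in E$ and $\lambda_i > m_0$. For the nonvanishing assertion with $m_0 \geq 1$, the fact that $y_0 \notin A_{m_0-1}$ produces a multiindex $\mu$ with $\mu_i > m_0 - 1$ and some $u$ with $\Phi_{|\mu|}(y_0,S_{\mu,u}) \neq 0$; since $y_0 \in A_{m_0}$ forces $\mu_i \leq m_0$, we conclude $\mu_i = m_0$. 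When $m_0 = 0$ this direct route is vacuous, and here I would invoke nowhere triviality: there is $f$ with $Tf(y_0)\neq 0$, hence some $\Phi_k(y_0,\cdot)\neq 0$; expanding the relevant argument in the spanning family $\{S_{\lambda,u}\}$ and discarding (via $y_0 \in A_0$) every $\lambda$ with $\lambda_i > 0$ yields a $\mu$ with $\mu_i = 0$ and $\Phi_{|\mu|}(y_0,S_{\mu,u})\neq 0$.

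I expect the only genuinely delicate step to be the density of $U_i$; the stratification and the verification of the two conditions are essentially bookkeeping built on the continuity, local finiteness, and nowhere triviality already supplied by Theorem \ref{thm10}. The minimal-index trick is what makes density work: because the strata are closed, increasing, and cover $Y$ up to interiors, a nonempty open set cannot avoid the ``new layer'' $\inte(A_{m^*})\setminus A_{m^*-1}$ without collapsing into the previous interior and contradicting the choice of $m^*$.
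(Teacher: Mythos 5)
Your proof is correct and follows essentially the same stratification-by-largest-$i$-th-exponent idea as the paper, which works with the open sets $A_m=\{y:\Phi_{|\lambda|}(y,S_{\lambda,u})\neq 0 \text{ for some }\lambda \text{ with }\lambda_i=m\}$ and their closed complements $B_m$, and exhibits the dense open set locally as $W\cap[A_{k_0}\cup\bigcup_{m<k_0}(A_m\cap\inte B_{m+1}\cap\dots\cap\inte B_{k_0})]$. The differences are only organizational: you use the cumulative closed filtration with a minimal-index density argument and keep the bottom stratum $m_0=0$, handling it via nowhere triviality together with the spanning of $\cS^k(\R^n,E)$ by the $S_{\lambda,u}$, whereas the paper uses the same nowhere-triviality input to show the corresponding piece $\bigcap_m\inte B_m$ is empty.
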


\begin{proof}
$Y$ can be covered by open subsets $W$, where for each $W$, there exists $k_0 = k_0(W) \in \N$ such that $\Phi_k(y,\cdot) = 0$ for all $k > k_0$ and all $y \in W$.  For $m \geq 0$, let
\begin{align*}
A_m & = \{y \in Y: \text{there exist } \lambda \text{ and } u \text{ so that } \lambda_i = m, \Phi_{|\lambda|}(y,S_{\lambda,u}) \neq 0\}\\
B_m & = \{y \in Y: \text{for all } \lambda \text{ and } u \text{ with } \lambda_i = m, \Phi_{|\lambda|}(y,S_{\lambda,u}) = 0\}.
\end{align*}
Clearly, $A_m$ is open and $A_m \cup B_m = Y$.
For each $W$ and $k_0$ as above,
\[ U_W =  W \cap [A_{k_0} \cup \bigcup^{k_0-1}_{m=0}(A_m \cap \inte B_{m+1}\cap \dots\cap \inte B_{k_0}) \cup \bigcap^{k_0}_{m=0}\inte B_m]\]
is a dense open set in $W$. If $y \in W \cap \bigcap^{k_0}_{m=0}\inte B_m$, then $\Phi_{|\lambda|}(y,S_{\lambda,u}) = 0$ whenever $\lambda_i \leq k_0$ and $u \in E$. Thus $\Phi_k(y,\cdot)= 0$ for all $k \leq k_0$. Since $y \in W$, $\Phi_k(y,\cdot) = 0$ for $k > k_0$ as well. This contradicts the fact that $T$ is nontrivial at $y$. Hence
\[ U_W =  W \cap [A_{k_0} \cup \bigcup^{k_0-1}_{m=0}(A_m \cap \inte B_{m+1}\cap \dots\cap \inte B_{k_0})].\]
The set $U_i = \cup_WU_W$
fulfills the conditions of the proposition.
\end{proof}

Let $U$ be the set $\cap^n_{i=1}U_i$.  Then $U$ is a dense open subset of $Y$.  Denote the $i$-th component of $h$ by $h_i$. Given multiindices $\lambda = (\lambda_1,\dots,\lambda_n)$ and $\nu = (\nu_1,\dots,\nu_n)$, we write $\lambda \leq \nu$ to mean that $\lambda_i \leq \nu_i$, $1\leq i \leq n$. In this case, we let the binomial coefficient $\binom{\nu}{\lambda}$ be $\prod^n_{i=1}\binom{\nu_i}{\lambda_i}$.

\begin{prop}\label{prop29}
For $1\leq i \leq n$, $h_i$ is $C^q$ on the set $U_i$.  Therefore, $h$ is $C^q$ on the dense open set $U$ of $Y$.
\end{prop}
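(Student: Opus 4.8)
The plan is to prove the first assertion—that $h_i$ is $C^q$ on $U_i$—since the second then follows immediately: on $U=\cap_i U_i$ every component $h_i$ is $C^q$, so the $\R^n$-valued map $h$ is $C^q$ there. Fix $i$ and $y_0\in U_i$, and let $m_0$, the neighborhood $V$, the multiindex $\mu$ with $\mu_i=m_0$, and $u\in E$ with $\Phi_{|\mu|}(y_0,S_{\mu,u})\neq0$ be as furnished by Proposition \ref{prop28}. Write $a=h(y_0)$ and $\sigma=\mu-m_0e_i$ (so $\sigma_i=0$), and shrink $V$ so that $h(V)$ lies in a ball $B(a,\rho)\subseteq X$. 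For each integer $\ell\geq0$ I introduce the test function $f^{(\ell)}(x)=\vp_0(x)(x_i-a_i)^\ell(x-a)^\sigma u$, where $\vp_0\in C^p(X)$ is a cut-off equal to $1$ on $B(a,\rho)$; then $f^{(\ell)}\in C^p(X,E)$, and since $Tf^{(\ell)}(y)$ depends only on the jet of $f^{(\ell)}$ at $h(y)$ through the representation of Theorem \ref{thm10}, the cut-off is irrelevant to the computations below. Fixing a functional $v^*\in F^*$, I set $\gamma_\ell=v^*\circ Tf^{(\ell)}$, which is a $C^q$ real-valued function on $V$ because $Tf^{(\ell)}\in C^q(Y,F)$.

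The heart of the argument is to expand $\gamma_\ell$. Applying the Leibniz rule to $D^kf^{(\ell)}(h(y))$ and grouping the terms according to the number $a'$ of derivatives falling on the factor $(x_i-a_i)^\ell$, one finds, for $y\in V$,
\[ \gamma_\ell(y)=\sum_{a'=0}^{m_0}\frac{\ell!}{(\ell-a')!}\,\eta(y)^{\ell-a'}\,c_{a'}(y),\qquad \eta:=h_i-a_i, \]
where each coefficient $c_{a'}(y)$ is a finite combination of terms $v^*\Phi_{a'+b}(y,W)$ with $W$ ranging over operators $S_{\lambda,u}$ having $\lambda_i=a'$, together with powers of the components $h_j(y)$, $j\neq i$. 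The sum truncates at $a'=m_0$ precisely because Proposition \ref{prop28} forces $\Phi_{|\lambda|}(y,S_{\lambda,u})=0$ on $V$ whenever $\lambda_i>m_0$; in particular the putative $(m_0+1)$-st coefficient $c_{m_0+1}$ vanishes identically on $V$. Using the local uniform boundedness and separate continuity of the $\Phi_k(\cdot,\cdot)$ from Theorem \ref{thm10}, one checks that each $c_{a'}$ is continuous. The delicate point is to secure $c_{m_0}(y_0)\neq0$, and this is exactly why the monomials are centered at $a=h(y_0)$: at $y=y_0$ every incomplete derivative of $(x-a)^\sigma$ carries a factor $h_j(y_0)-a_j=0$ for some $j\neq i$, so all but the top ($b=|\sigma|$) contribution to $c_{m_0}$ drop out, leaving $c_{m_0}(y_0)$ equal to a positive multiple of $v^*\Phi_{|\mu|}(y_0,S_{\mu,u})$. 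Choosing $v^*$ with $v^*\Phi_{|\mu|}(y_0,S_{\mu,u})\neq0$ and shrinking $V$, I may thus assume $c_{m_0}\neq0$ throughout $V$. Arranging this nondegeneracy against the cancellations possible among the $\Phi$'s when the witnessing multiindex $\mu$ is mixed is the main obstacle.

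With the expansion in hand, the functions $\gamma_\ell$ package into the generating identity $\sum_\ell\gamma_\ell(y)s^\ell/\ell!=e^{s\eta(y)}Q(s,y)$, where $Q(s,y)=\sum_{a'=0}^{m_0}c_{a'}(y)s^{a'}$ has degree $m_0$ in $s$ with leading coefficient $c_{m_0}(y)\neq0$. Since the coefficient of $s^{m_0+1}$ in $e^{-s\eta}\sum_\ell\gamma_\ell s^\ell/\ell!$ is $c_{m_0+1}\equiv0$, the scalar $\eta(y)$ is a root of the polynomial
\[ P(\zeta,y)=\sum_{j=0}^{m_0+1}\binom{m_0+1}{j}(-\zeta)^{m_0+1-j}\gamma_j(y). \]
This $P$ is $C^q$ in $y$ (a polynomial in the $C^q$ functions $\gamma_j$) and polynomial in $\zeta$; moreover a direct differentiation, again via the generating identity, gives $\partial_\zeta P(\eta(y),y)=-(m_0+1)!\,c_{m_0}(y)\neq0$ on $V$. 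The continuous function $\eta=h_i-a_i$ is therefore a nondegenerate root of a $C^q$ equation, and the implicit function theorem—applied with the scalar unknown $\zeta$ and the Banach-space parameter $y$—identifies it locally with the unique $C^q$ solution. Hence $h_i$ is $C^q$ on a neighborhood of $y_0$, and, $y_0\in U_i$ being arbitrary, $h_i\in C^q(U_i)$. As noted, this yields that $h$ is $C^q$ on $U$.
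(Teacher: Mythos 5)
Your argument is correct and follows essentially the same route as the paper: the same test functions $(x_i-a_i)^\ell(x-a)^{\mu-m_0e_i}u$ for $\ell=0,\dots,m_0+1$, the same expansion into coefficients $c_{a'}$ truncated at $m_0$ by Proposition \ref{prop28}, and the same nondegeneracy $c_{m_0}(y_0)\neq 0$ coming from centering the monomials at $h(y_0)$. The only (cosmetic) difference is the last step: the paper inverts the map $F=(F_0,\dots,F_{m_0+1})$ by the inverse function theorem, whereas you eliminate the unknown coefficients via the alternating binomial sum and apply the scalar implicit function theorem to $P(\zeta,y)=0$ --- the same computation, since your $\partial_\zeta P(\eta(y),y)=-(m_0+1)!\,c_{m_0}(y)$ is exactly the paper's row-combination identity for $DF$.
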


\begin{proof}
Take $y_0 \in U_i$. We may assume that $i=1$ and $h(y_0) = 0$.
Let $V$ and $m_0$ be given by Proposition \ref{prop28} and let $\mu = (\mu_1,\dots,\mu_n)$, $u \in E$ and $v^*\in F^*$ be such that $\mu_1 = m_0$ and $v^*(\Phi_{|\mu|}(y_0,S_{\mu,u})) \neq 0$.
If $0 \leq j \leq m_0+1$, let $f_j: X \to E$ be the function $f_j(x) = x^{\nu_j}u$, where $\nu_j$ is the multiindex $(j,\mu_2,\dots,\mu_n)$.  Then $f_j \in C^p(X,E)$ and, for all $y \in V$,
\[ Tf_j(y) = \sum_{k=0}^{j\wedge m_0}\sum_{\substack{\lambda \leq \nu_j\\ \lambda_1=k}}|\lambda|!\binom{\nu_j}{\lambda}h(y)^{\nu_j-\lambda}\cdot\Phi_{|\lambda|}(y,S_{\lambda,u}) .\]
If $0 \leq j \leq m_0$, set
\[ g_j(y) = \sum_{\substack{\lambda \leq \nu_j\\ \lambda_1=j}}|\lambda|!\binom{\nu_j}{\lambda}h(y)^{\nu_j-\lambda}\cdot v^*(\Phi_{|\lambda|}(y,S_{\lambda,u})).\]
Then
\begin{align*}
\sum^j_{k=0}\binom{j}{k}\,g_{j-k}h_1^k &= v^*\circ Tf_j,\ 0\leq j\leq m_0, \text{ and} \\
\sum^{m_0+1}_{k=1}\binom{m_0+1}{k}\,g_{m_0+1-k}h_1^k &= v^*\circ Tf_{m_0+1}
\end{align*}
are $C^q$ functions on $V$.
Consider functions $F_j: \R^{m_0+2} \to \R$ given by
\[ F_j(t_0,\cdots, t_{m_0},s) = \sum^j_{k=0}\binom{j}{k}t_{j-k}s^k\]
for $0\leq j \leq m_0$ and $F_{m_0+1}(t_0,\cdots, t_{m_0},s) = \sum^{m_0+1}_{k=1}\binom{m_0+1}{k}t_{m_0+1-k}s^k$.
Now
\begin{align*} \sum^{m_0+1}_{j=0}(-1)^{j}&\binom{m_0+1}{j}s^{m_0+1-j}\partial_{t_i}F_j \\&= (-1)^{i}s^{m_0+1-i}\binom{m_0+1}{i}\sum^{m_0+1-i}_{j=0}\binom{m_0+1-i}{j}(-1)^{j}\\ &= (-1)^{i}s^{m_0+1}\binom{m_0+1}{i}(1-1)^{m_0+1-i} = 0,
\end{align*}
and
\begin{align*}
\sum^{m_0+1}_{j=0}&(-1)^{j}\binom{m_0+1}{j}s^{m_0+1-j}\partial_{s}F_j \\&= \sum^{m_0+1}_{j=1}\sum^j_{k=1}(-1)^j\binom{m_0+1}{j}\binom{j}{k}kt_{j-k}s^{m_0+k-j},
\intertext{which yields, upon substituting $i = j-k$ and switching the order of summation,}
&= \sum^{m_0}_{i=0}(-1)^{i}t_is^{m_0-i}\binom{m_0+1}{i}\bigl[\sum^{m_0+1-i}_{j=1}\binom{m_0-i+1}{j}j(-1)^j\bigr]\\
&= \sum^{m_0}_{i=0}(-1)^it_is^{m_0-i}\binom{m_0+1}{i}\bigl[\frac{d}{dt}(1-t)^{m_0-i+1}\bigr]_{|t=1} \\& = (-1)^{m_0}(m_0+1)t_{m_0}.
\end{align*}
It follows that the function $F = (F_0,\dots,F_{m_0+1})$ has a local $C^\infty$ inverse at any $(t_0,\dots,t_{m_0},s)$ with $t_{m_0} \neq 0$. Since $F(g_0,\dots,g_{m_0},h_1)$ is $C^q$ on $V$ and, because $h(y_0) = 0$,
\[ g_{m_0}(y_0) = |\mu|!\,v^*(\Phi_{|\mu|}(y_0,S_{\mu,u})) \neq 0,\]
the function $h_1$, in particular, must be $C^q$ on a neighborhood of $y_0$.
\end{proof}

The next example shows that $h$ need not be differentiable at all points of $Y$.

\bigskip

\noindent{\bf Example}.
Assume that $0 \leq q < p <\infty$. If $f \in C^p(\R)$, then it is easy to check that the function
\[ g(t) = \begin{cases}
          \sum^p_{i=0}\frac{t^i}{i!}f^{(i)}(0) &\text{ $t < 0$}\\
          f(t) &\text{ $t \geq 0$}
          \end{cases}\]
belongs to $C^p(\R)$. Consider the map $R: f \mapsto g$ as an operator from $C^p(\R)$ to $C^q(I)$, where $I = (-1,1)$. Then $R$ is a continuous, separating, nowhere trivial linear operator. It is even compact as it maps the open set $\{f\in C^p: \max_{k\leq p}\sup_{|t|\leq 1}|f^{(k)}(t)| < 1\}$ onto a compact subset of $C^q(I)$. However, the support map $h: I \to \R$ of $R$, given by  $h(t) = 0$ if $t < 0$ and $h(t) = t$ if $t \geq 0$, is not differentiable at $0$.

\bigskip

To proceed, we require a result from \cite{LW}.

\renewcommand{\theenumi}{\alph{enumi}}

\begin{prop}\cite[Proposition 8]{LW}\label{prop30}
Let $\mathfrak{X}$ be a Banach space and suppose that $\Phi: U\times
\mathfrak{X} \to F$ has the following properties.
\begin{enumerate}
\item For each $u \in \mathfrak{X}$, $\Phi(\cdot,u): U \to F$ belongs
to $C^q(U,F)$.  Denote the $j$-th derivative of this function
by $D^j_y\Phi(\cdot,u)$.
\item For each $y\in U$ and each $j \in \N_q$, $D^j_y\Phi(y,\cdot):\mathfrak{X}\to \cS^{j}(H,F)$ is a bounded linear operator.
\item For each $y_0 \in U$ and each $j \in \N_q$, there exists $\ep
> 0$ so that
\[ \sup_{y \in B(y_0,\ep)}\sup_{\|u\|\leq 1}\|D^j_y\Phi(y,u)\| <
\infty.\]
\end{enumerate}
If $f: U \to \mathfrak{X}$ is a $C^q$ function on $U$, then $\Psi:
U \to F$ defined by $\Psi(y) = \Phi(y,f(y))$ belongs to
$C^q(U,F)$.  For each $m \in \N_q$ and each $y \in U$,
\begin{equation}\label{eq9}
D^m\Psi(y)s^m = \sum^m_{j=0}\binom{m}{j}D^j_y\Phi(y, D^{m-j}f(y)s^{m-j})s^j,
\quad s \in H.
\end{equation}
\end{prop}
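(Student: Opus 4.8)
The plan is to argue by induction on $q$, letting the Banach spaces $\mathfrak{X}$ and $F$ range freely, since the inductive step produces auxiliary data whose source and target spaces differ from the original. The decisive preliminary observation is that hypothesis (c) upgrades the pointwise (in $u$) regularity (a) to operator-norm regularity: for $0 \le j \le q-1$ and fixed $u$ with $\|u\|\le 1$, the fundamental theorem of calculus applied to the $C^1$ map $D^j_y\Phi(\cdot,u)$ gives
\[ \|D^j_y\Phi(y_1,u) - D^j_y\Phi(y_2,u)\| \le \Bigl(\sup_{\xi\in[y_1,y_2]}\|D^{j+1}_y\Phi(\xi,\cdot)\|\Bigr)\|y_1-y_2\|, \]
and the supremum is finite and independent of $u$ by (c). Hence $y\mapsto D^j_y\Phi(y,\cdot)$ is locally Lipschitz into $L(\mathfrak{X},\cS^j(H,F))$ in the operator norm. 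This single fact reconciles the strong and norm topologies and is what makes the whole scheme run; I expect it to be the main conceptual obstacle, since without it the terms in (\ref{eq9}) need not even be continuous. The base case $q=0$ is the statement that $y\mapsto\Phi(y,g(y))$ is continuous whenever $g$ is: writing $\Phi(y,g(y))-\Phi(y_0,g(y_0)) = \Phi(y,g(y)-g(y_0)) + (\Phi(y,g(y_0))-\Phi(y_0,g(y_0)))$, the first term is controlled by $\|\Phi(y,\cdot)\|\,\|g(y)-g(y_0)\|$ via (c), and the second tends to $0$ by (a) at the fixed vector $g(y_0)$.

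Next I would establish the first-order product rule: for $q\ge 1$ and $g\in C^1(U,\mathfrak{X})$, the map $y\mapsto\Phi(y,g(y))$ is differentiable with
\[ D_y[\Phi(y,g(y))]\,w = D^1_y\Phi(y,g(y))w + \Phi(y, Dg(y)w). \]
After adding and subtracting $\Phi(y+w,g(y))$ and $\Phi(y,Dg(y)w)$, the error splits into three pieces: the increment of $\Phi(\cdot,g(y))$ minus its differential, which is $o(\|w\|)$ by (a); the term $\Phi(y+w, g(y+w)-g(y)-Dg(y)w)$, which is $o(\|w\|)$ because it is dominated by $\|\Phi(y+w,\cdot)\|\,\|g(y+w)-g(y)-Dg(y)w\|$ using (c); and the cross term $\Phi(y+w,Dg(y)w)-\Phi(y,Dg(y)w)$. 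This last piece is the technical heart: by the same fundamental-theorem device underlying the upgrade lemma it equals $\int_0^1 D^1_y\Phi(y+tw, Dg(y)w)\,w\,dt$ and is bounded by $\sup_t\|D^1_y\Phi(y+tw,\cdot)\|\,\|Dg(y)\|\,\|w\|^2 = o(\|w\|)$ via (c) at $j=1$.

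With these two building blocks, the inductive step is clean. Differentiating $\Psi$ by the product rule gives $D\Psi(y) = \Psi^{(1)}(y,f(y)) + \widetilde\Phi(y, Df(y))$, where $\Psi^{(1)}(y,u) = D^1_y\Phi(y,u)$ takes values in $\cS^1(H,F)$ and $\widetilde\Phi(y,A) = [w\mapsto\Phi(y,Aw)]$ takes values in $L(H,F)$ with second variable in $L(H,\mathfrak{X})$. A routine check, using the upgrade lemma to supply property (a) for $\widetilde\Phi$ in the operator norm, shows that both $\Psi^{(1)}$ and $\widetilde\Phi$ satisfy hypotheses (a)--(c) with $q$ replaced by $q-1$; their $y$-derivatives are $D^{\bullet}_y\Phi$ evaluated on the appropriate arguments, so (b) and (c) transfer directly. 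Since $f$ and $Df$ are $C^{q-1}$, the inductive hypothesis applies to each summand and yields $D\Psi\in C^{q-1}$, whence $\Psi\in C^q(U,F)$. Finally, applying the inductive form of (\ref{eq9}) to the two summands and regrouping by Pascal's identity (equivalently, a direct induction on $m$ differentiating the order-$(m-1)$ formula term by term) produces the stated formula (\ref{eq9}); this last step is routine Leibniz bookkeeping once the regularity is secured.
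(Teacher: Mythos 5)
The paper offers no proof of this proposition to compare against: it is imported verbatim from \cite{LW} (``Local operators on $C^p$'', submitted), so your argument can only be judged on its own terms. On those terms it is correct. You have isolated the genuine crux: hypothesis (a) gives $C^q$-smoothness of $y\mapsto\Phi(y,u)$ only for each \emph{fixed} $u$, while the inductive step needs the auxiliary map $y\mapsto\widetilde\Phi(y,A)=[w\mapsto\Phi(y,Aw)]$ to be $C^{q-1}$ \emph{into the Banach space} $L(H,F)$, i.e.\ smooth in the operator norm; your upgrade lemma (integral form of the increment of $D^j_y\Phi(\cdot,v)$ plus (b) and (c) at level $j+1$, giving local Lipschitz continuity of $y\mapsto D^j_y\Phi(y,\cdot)$ in $L(\mathfrak{X},\cS^j(H,F))$ for $j\leq q-1$) is exactly what closes this gap, since feeding it back into the integral form of the Taylor remainder shows that the pointwise derivatives $[w\mapsto D^j_y\Phi(y,Aw)]$ really are the Fr\'echet derivatives of $\widetilde\Phi(\cdot,A)$ up to order $q-1$, uniformly over bounded $A$. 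The three-term splitting in the first-order product rule is the standard and correct one, the transfer of (a)--(c) to $\Psi^{(1)}$ and $\widetilde\Phi$ (with $\mathfrak{X}$ replaced by $L(H,\mathfrak{X})$ and $f$ by $Df$) is as routine as you claim, and the Pascal regrouping does reproduce (\ref{eq9}). Two details worth writing out in a final version: the case $q=\infty$ is obtained by running the finite-order argument for every $m$, and the verification that $\widetilde\Phi$ satisfies hypothesis (a) at all orders up to $q-1$ should be displayed rather than asserted, since it is the one place where the operator-norm upgrade is consumed at every level $j\leq q-1$ and not merely at $j=0$.
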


Applying Proposition \ref{prop30} to the operator $T$ with the representation given at the beginning of the section, we obtain

\begin{prop}\label{prop31}
For any $k$ and any $S \in \cS^k(\R^n,E)$, the function $y\mapsto \Phi_k(y,S)$ belongs to $C^q(U,F)$. Given $m \in \N_q$, denote the $m$-th derivative of this function by $D^m_y\Phi_k(\cdot,S)$.
For each $y \in U$, the map $S \mapsto D^m_y\Phi_k(y,S)$ is a bounded linear operator from $\cS^k(\R^n,E)$ to $\cS^m(H,F)$. The set of all such operators is locally bounded on $U$; that is, for each $y_0\in U$, there exists $\ep > 0$ so that
\[ \sup_{y \in B(y_0,\ep)}\sup_{\|S\|\leq 1}\|D^m_y\Phi_k(y,S)\| <
\infty.\]
\end{prop}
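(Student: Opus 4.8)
The plan is to establish the three assertions --- that $\Phi_k(\cdot,S)\in C^q(U,F)$, that $S\mapsto D^m_y\Phi_k(y,S)$ is linear and bounded, and that these operators are locally bounded --- simultaneously, by induction on $k$, on a neighborhood of an arbitrary $y_0\in U$. Fix $y_0\in U$ and use Theorem \ref{thm10} to choose a neighborhood $V\subseteq U$ of $y_0$ and $k_0\in\N_p$ with $\Phi_k(y,\cdot)=0$ for $k>k_0$ and $y\in V$, so the induction is finite. Because the base space is $\R^n$, every $S\in\cS^k(\R^n,E)$ decomposes as $S=\sum_{|\lambda|=k}S_{\lambda,c_\lambda}$, where the $E$-valued Taylor coefficients $c_\lambda$ of $x\mapsto Sx^k$ satisfy $\|c_\lambda\|\le c_{n,k}\|S\|$ (all norms on the finite-dimensional space of symmetric $k$-forms being equivalent). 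Since each $\Phi_k(y,\cdot)$ is linear by Theorem \ref{thm10}, it therefore suffices to control $\Phi_k(\cdot,S_{\lambda,u})$ for the finitely many $\lambda$ of order $k$, uniformly in $\|u\|\le1$; the assertions for general $S$ then follow by summation.

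I would read off $\Phi_k$ from $T$ applied to the monomial $f_{\lambda,u}(x)=x^\lambda u\in C^p(X,E)$, for which $D^jf_{\lambda,u}=0$ when $j>k=|\lambda|$ and $D^kf_{\lambda,u}\equiv\lambda!\,S_{\lambda,u}$ is constant. For $k=0$ one takes the constant function $f\equiv u$, so that $Tf(y)=\Phi_0(y,u)$; hence $\Phi_0(\cdot,u)=Tf\in C^q(Y,F)$, and the continuity estimate $\sup_{y\in K}\|D^m(Tf)(y)\|\le C(K,m)\rho_{L,\ell}(f)=C(K,m)\|u\|$ supplies the local boundedness. For the inductive step, assume the three properties hold for all $\Phi_j$ with $j<k$ on $V$. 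On $V$ the representation of $T$ gives
\[
\lambda!\,\Phi_k(y,S_{\lambda,u})=Tf_{\lambda,u}(y)-\sum_{j=0}^{k-1}\Phi_j\bigl(y,D^jf_{\lambda,u}(h(y))\bigr).
\]
By Proposition \ref{prop29}, $h$ is $C^q$ on $U$, so each $y\mapsto D^jf_{\lambda,u}(h(y))$ is a $C^q$ map into $\cS^j(\R^n,E)$ (a polynomial in $h(y)$, linear in $u$). The induction hypothesis is exactly that $\Phi_j$ satisfies hypotheses (a)--(c) of Proposition \ref{prop30}; that result then shows each summand belongs to $C^q(V,F)$. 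As $Tf_{\lambda,u}\in C^q(Y,F)$, the displayed identity exhibits $\Phi_k(\cdot,S_{\lambda,u})$ as a $C^q$ function, giving the first assertion for $\Phi_k$.

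To obtain the local boundedness (whence also the boundedness of $S\mapsto D^m_y\Phi_k(y,S)$), I would differentiate the displayed identity $m$ times in $y$. The first term is handled by continuity of $T$: $\sup_{y\in K}\|D^m(Tf_{\lambda,u})(y)\|\le C(K,m)\rho_{L,\ell}(f_{\lambda,u})\le C'\|u\|$, since $\rho_{L,\ell}(f_{\lambda,u})$ is controlled by $\|u\|$ on the compact set $L$. Each remaining term is expanded by the derivative formula (\ref{eq9}) of Proposition \ref{prop30}; its norm is then bounded using hypothesis (c) for $\Phi_j$ (the locally uniform bound on $\|D^i_y\Phi_j(y,\cdot)\|$, part of the induction hypothesis) together with the locally uniform estimate $\|D^{m-i}_y\bigl(D^jf_{\lambda,u}\circ h\bigr)(y)\|\le C\|u\|$, which follows from the continuity of the derivatives of $h$ up to order $m$ on $V$ and linearity in $u$. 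Summing over $j$ and over the finitely many $\lambda$, and invoking the decomposition of the first paragraph, yields $\sup_{y\in V'}\sup_{\|S\|\le1}\|D^m_y\Phi_k(y,S)\|<\infty$ on a possibly smaller neighborhood $V'$. The main obstacle is precisely the interference among the terms $\Phi_j$, $j<k$: isolating $\Phi_k$ requires the lower-order terms to be already $C^q$ with locally bounded $y$-derivatives, which forces the induction on $k$ and makes the finite dimensionality of $\R^n$ --- reducing uniform control in $S$ to the finitely many monomials $S_{\lambda,u}$ --- essential to the estimate.
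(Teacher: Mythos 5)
Your proof is correct and follows essentially the same route as the paper: induction on $k$ via the identity $Tf_S(y)=\Phi_k(y,S)+\sum_{i<k}\frac{k!}{(k-i)!}\Phi_i(y,Sh(y)^{k-i})$ with $f_S(x)=Sx^k$, using Proposition \ref{prop29} and Proposition \ref{prop30} (with formula (\ref{eq9})) to handle the lower-order composite terms and the continuity of $T$ to control $D^m(Tf_S)$. The only cosmetic differences are that the paper works directly with general $S$ rather than decomposing into monomials $S_{\lambda,u}$ --- the seminorm estimate $\rho_{L,\ell}(f_S)\le C\|S\|$ needs no finite-dimensionality, so that reduction is an unnecessary detour (finite-dimensionality enters only through Proposition \ref{prop29}), and your normalization should read $D^kf_{\lambda,u}=|\lambda|!\,S_{\lambda,u}$ rather than $\lambda!\,S_{\lambda,u}$ --- and that the paper derives the local boundedness of $y\mapsto D^m(Tf_S)(y)$ from pointwise convergence plus the uniform boundedness principle where you use the direct seminorm bound; both are valid.
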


\begin{proof}
We proceed by induction on $k$. Assume that the proposition holds up to and including $k-1$. (If $k=0$, the assumption is vacuously true.) If $S \in \cS^k(\R^n,E)$, let $f_S:X \to E$ be the function $f_S(x) = Sx^k$. Then
\begin{equation}\label{eq10}
Tf_S(y) = \Phi_k(y,S) + \sum^{k-1}_{i=0}\frac{k!}{(k-i)!}\Phi_i(y,Sh(y)^{k-i}).
\end{equation}
Since $h$ is $C^q$ on $U$ by Proposition \ref{prop29}, it follows from the inductive hypothesis and Proposition \ref{prop30} that the summation term above is a $C^q$ function of $y$ on $U$.
Since $Tf_S \in C^q(Y,F)$ as well, $\Phi_k(y,S)$ is a $C^q$ function of $y$ on $U$.
Fix $y \in U$. By the continuity of $T$, for each $m$ in $\N_q$ and each $y \in Y$, $D^m(Tf_S)(y)$ is a bounded linear operator of the variable $S$.
Furthermore, by the inductive hypothesis and the differentiation formula (\ref{eq9}), the $m$-th derivative with respect to $y$ of the summation term in (\ref{eq10}) is a bounded linear operator of $S$. Thus the map $S\mapsto D^m_y\Phi_k(y,S)$ is a bounded linear operator.
To obtain the final conclusion of the proposition, it suffices to show, in view of (\ref{eq9}), (\ref{eq10}) and the inductive hypothesis, that for any $y_0 \in U$, there exists $\ep > 0$ so that
\[\sup_{y \in B(y_0,\ep)}\sup_{\|S\|\leq 1}\|D^m(Tf_S)(y)\| <
\infty.\]
For each $y\in Y$, we have already observed that the linear operator $R_{m,y} : \cS^k(\R^n,E) \to \cS^m(H,F)$, $R_{m,y}(S) = D^m(Tf_S)(y)$, is bounded. If $(y_j)$ converges to $y_0$, then
\[ R_{m,y_j}(S) = D^m(Tf_S)(y_j) \to D^m(Tf_S)(y_0) = R_{m,y_0}(S)\]
since $Tf_S$ is a $C^q$ function. By the uniform boundedness principle, the sequence of operators $(R_{m,y_j})$ is uniformly bounded. Hence the operators $(R_{m,y})_{y\in Y}$ are locally bounded (in the operator norm) on $Y$, yielding the desired result.
\end{proof}

Now suppose that $T$ is also compact. There exist a compact set $L$ in $X$ and $\ell \in \N_p$ so that $\{Tf: \rho_{L,\ell}(f) <1\}$ is relatively compact in $C^q(Y,F)$. For $k \in \N_p$ and $S \in \cS^k(\R^n,E)$, let $f_S(x) = Sx^k$.
Then $\{Tf_S: \|S\|\leq 1\}$ is relatively compact in $C^q(Y,F)$. Let $K = \{(y_i)\}\cup\{y_0\}$, where $(y_i)$ is a sequence in $Y$ converging to $y_0 \in Y$. For each $m \in \N_q$, $\{D^m(Tf_S)_{|K}: \|S\| \leq 1\}$ is relatively compact in $C(K,\cS^m(H,F))$. Let $R_{m,y}(S) = D^m(Tf_S)(y)$
Thus $\{R_{m,y}(S):\|S\| \leq 1\}$ is relatively compact for each $y \in K$ and the set of maps $y \mapsto R_{m,y}(S)$, $\|S\| \leq 1$, is equicontinuous on $K$. Hence $R_{m,y}$ is a compact operator for any $y \in Y$ (since $K$ can be chosen to contain $y$) and $\lim_{i}R_{m,y_i} = R_{m,y_0}$ in the operator norm of $L(\cS^k(\R^n,E),\cS^m(H,F))$. Since $Y$ is metric, this means that $y\mapsto R_{m,y}$ is continuous with respect to the operator norm topology.

\begin{thm}\label{thm32}
Let $X$ be an open set in $\R^n$ and assume that $0\leq q < p \leq \infty$. Suppose that $T: C^p(X,E) \to C^q(Y,F)$ is a compact, separating, nowhere trivial linear operator with support map $h$.
Then there exist $\ell\in \N_p$ and functions $\Phi_k: Y\times \cS^k(\R^n,E) \to F$, $0 \leq k \leq \ell$, so that $T$ has the representation
\begin{equation*}%\label{eq7}
Tf(y) = \sum^{\ell}_{k=0}\Phi_k(y, D^kf(h(y)))
\end{equation*}
for all $f\in C^p(X,E)$ and all $y \in Y$.  For a given $y$, $\Phi_k(y,\cdot)$ is a bounded linear operator from $\cS^k(\R^n,E)$ into $F$.  The set $h(Y)$ is relatively compact in $X$. There exists a dense open set $U$ in $Y$ so that $h$ is $C^q$ on $U$.
For each $S \in \cS^k(\R^n,E)$, $\Phi_k(y,S)$ belongs to $C^q(U,F)$ as a function of $y \in U$.
Moreover, for any $k \leq \ell$,
\begin{enumerate}
\item For any $y \in U$ and $m \in \N_q$, the map $D_y^m\Phi_k(y,\cdot): \cS^k(\R^n,E) \to \cS^m(H,F)$ is a compact operator;
\item For each $m \in \N_q$, the map $y\mapsto D_y^m\Phi_k(y,\cdot)$ from $U$ into the space of bounded linear operators $L(\cS^k(\R^n,E), \cS^m(H,F))$ is continuous with respect to the operator norm topology.
\end{enumerate}
\end{thm}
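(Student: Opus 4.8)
The plan is to assemble the purely structural assertions from results already in hand, and then to reduce the two operator-theoretic conditions (1) and (2) to a single induction on $k$.

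First I would record the representation and the locus of regularity. Since $T$ is compact it maps an open set onto a relatively compact, hence bounded, set, so Proposition \ref{prop24} furnishes an $\ell \in \N_p$ with $\Phi_k = 0$ for $k > \ell$ and shows that $h(Y)$ is relatively compact in $X$; combined with Theorem \ref{thm10} this gives the finite representation $Tf(y) = \sum_{k=0}^{\ell}\Phi_k(y,D^kf(h(y)))$, valid for all $y \in Y$, with each $\Phi_k(y,\cdot)$ bounded and linear. Proposition \ref{prop29} supplies the dense open set $U$ on which $h$ is $C^q$, and Proposition \ref{prop31} shows that $\Phi_k(\cdot,S)\in C^q(U,F)$ and that the derivative operators $S\mapsto D^m_y\Phi_k(y,S)$ are bounded and locally bounded on $U$. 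Thus every assertion except the compactness statement (1) and the operator-norm continuity statement (2) is immediate.

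For (1) and (2) I would exploit the discussion immediately preceding the theorem: with $f_S(x)=Sx^k$, the operators $R_{m,y}\colon S\mapsto D^m(Tf_S)(y)$ are compact for every $y\in Y$, and $y\mapsto R_{m,y}$ is continuous in the operator-norm topology. The link to $D^m_y\Phi_k(y,\cdot)$ is equation (\ref{eq10}), namely $Tf_S(y)=\Phi_k(y,S)+\sum_{i=0}^{k-1}\frac{k!}{(k-i)!}\Phi_i(y,Sh(y)^{k-i})$. Differentiating $m$ times in $y$ by the chain-rule formula (\ref{eq9}) of Proposition \ref{prop30} yields $R_{m,y}(S)=D^m_y\Phi_k(y,S)+(\text{terms built from }D^j_y\Phi_i(y,\cdot),\ i<k)$. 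This invites an induction on $k$: the base case $k=0$ is immediate, since there $Tf_S(y)=\Phi_0(y,S)$, so $D^m_y\Phi_0(y,\cdot)=R_{m,y}$ is compact and operator-norm continuous in $y$. In the inductive step I would argue that each correction term is a finite sum of compositions $D^j_y\Phi_i(y,\cdot)\circ A(y)$, where $i<k$, $j\le m$, and $A(y)$ is a bounded linear map determined by $S\mapsto Sh(y)^{k-i}$ and its $y$-derivatives up to order $m-j\le q$. Since compact operators form a two-sided ideal, each such composition, and hence their finite sum, is compact; subtracting from the compact operator $R_{m,y}$ gives compactness of $D^m_y\Phi_k(y,\cdot)$, which is (1). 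For (2), the maps $A(y)$ depend operator-norm continuously on $y$ because $h$ is $C^q$ on $U$ (so $h(y)$ and its derivatives vary continuously), while the inductive hypothesis gives operator-norm continuity of $y\mapsto D^j_y\Phi_i(y,\cdot)$; using the local boundedness from Proposition \ref{prop31} and the fact that products of operator-norm continuous, locally bounded families are operator-norm continuous, the correction terms depend operator-norm continuously on $y$, and subtracting from $R_{m,y}$ gives (2).

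The main obstacle I anticipate is the bookkeeping in the inductive step: one must verify, via the multivariate formula (\ref{eq9}), that after differentiating $\Phi_i(y,Sh(y)^{k-i})$ the dependence on $S$ genuinely factors through a bounded operator into $\cS^i(\R^n,E)$ to which a lower-index derivative operator $D^j_y\Phi_i(y,\cdot)$ is applied, so that the ideal property of the compact operators and the stability of operator-norm continuity under composition truly apply. Tracking the symmetric multilinear structure, and invoking polarization to pass from the diagonal values $s^m$ appearing in (\ref{eq9}) to the full operators into $\cS^m(H,F)$, is the delicate point; the remainder is a routine assembly of the cited propositions.
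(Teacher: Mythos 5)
Your proposal is correct and follows essentially the same route as the paper: assemble the structural claims from Proposition \ref{prop24}, Theorem \ref{thm10}, Proposition \ref{prop29} and Proposition \ref{prop31}, then prove (1) and (2) by induction on $k$ using equation (\ref{eq10}), the differentiation formula (\ref{eq9}), and the compactness and operator-norm continuity of $R_{m,y}(S)=D^m(Tf_S)(y)$ established just before the theorem. You spell out the ideal property of compact operators and the composition structure of the correction terms a bit more explicitly than the paper does, but the argument is the same.
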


\begin{proof}
By Proposition \ref{prop24}, $T$ has the given representation for some $\ell \in \N_p$ and $h(Y)$ is relatively compact in $X$. By Theorem \ref{thm10}, $\Phi_k(y,\cdot): \cS^k(\R^n,E)\to F$ is a bounded linear operator for each $y \in Y$. By Proposition \ref{prop29}, $h$ is $C^q$ on a dense open subset $U$ of $Y$.  By Proposition \ref{prop31}, for each $S \in \cS^k(\R^n,E)$, $\Phi_k(y,S)$ belongs to $C^q(U,F)$ as a function of $y \in U$. Finally, using the notation above, rewrite equation (\ref{eq10}) as
\[ \Phi_k(y,S) = \sum^{k-1}_{i=0}\frac{k!}{(k-i)!}\Phi_i(y,Sh(y)^{k-i}) - Tf_S(y).\]
For each $m \in \N_q$, observe that $D^m(Tf_S)(y) = R_{m,y}(S)$. By the discussion preceding the theorem, $R_{m,y}$ is a compact operator and a continuous function of $y$ with respect to the operator norm topology.  Differentiate the summation term ($m$-times) using (\ref{eq9}) for $y \in U$. Assuming conditions (a) and (b) for $\Phi_i$, $0\leq i <k$,
the $m$-th derivative with respect to $y$ of the summation term above is a compact operator of $S$, and is a continuous function of $y \in U$ with respect to the operator norm topology.
Therefore, we obtain conditions (a) and (b) for $\Phi_k$.
\end{proof}

\noindent{\bf Remarks}.

\noindent(a) If $\Phi_k(y_0,\cdot) \neq 0$ for some $k \in \N_p$ with $k+q > p$, then $h$ is constant on a neighborhood of $y_0$ by Theorem \ref{thm12}.

\noindent(b) If, in addition to the conditions on $\Phi_k$'s enunciated in the theorem, it is also assumed that for all $m \in \N_q$ and $1\leq k\leq \ell$, the $m$-th derivative of $\sum^{k-1}_{i=0}\frac{k!}{(k-i)!}\Phi_i(y,Sh(y)^{k-i})$ with respect to $y$, considered as a linear operator of the variable $S$, extends continuously in the operator norm topology to a function on $Y$, then the operator $T$ with the given representation is compact.

\bigskip

A similar result holds for weakly compact operators. The argument is similar to that of Theorem \ref{thm32}, but using the consideration for weak compactness as in the proof of Theorem \ref{thm27}. The details are omitted.

\begin{thm}\label{thm33}
Let $X$ be an open set in $\R^n$ and assume that $0\leq q < p \leq \infty$. Suppose that $T: C^p(X,E) \to C^q(Y,F)$ is a weakly compact, separating, nowhere trivial linear operator with support map $h$.
Then there exist $\ell\in \N_p$ and functions $\Phi_k: Y\times \cS^k(\R^n,E) \to F$, $0 \leq k \leq \ell$, so that $T$ has the representation
\begin{equation*}%\label{eq7}
Tf(y) = \sum^{\ell}_{k=0}\Phi_k(y, D^kf(h(y)))
\end{equation*}
for all $f\in C^p(X,E)$ and all $y \in Y$.  For a given $y$, $\Phi_k(y,\cdot)$ is a bounded linear operator from $\cS^k(\R^n,E)$ into $F$.  The set $h(Y)$ is relatively compact in $X$. There exists a dense open set $U$ in $Y$ so that $h$ is $C^q$ on $U$.
For each $S \in \cS^k(\R^n,E)$, $\Phi_k(y,S)$ belongs to $C^q(U,F)$ as a function of $y \in U$.
Moreover, for any $k \leq \ell$,
\begin{enumerate}
\item For any $y \in U$ and $m \in \N_q$, the map $D_y^m\Phi_k(y,\cdot): \cS^k(\R^n,E) \to \cS^m(H,F)$ is a weakly compact operator;
\item For each $m \in \N_q$, the map $y\mapsto (D_y^m\Phi_k(y,\cdot))^{**}$ from $U$ into the space of bounded linear operators $L(\cS^k(\R^n,E)^{**}, \cS^m(H,F)^{**})$ is continuous with respect to the strong operator topology.
\end{enumerate}
\end{thm}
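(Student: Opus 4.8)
The plan is to follow the proof of Theorem~\ref{thm32} line by line for every assertion that does not involve compactness, and to replace its Arzel\`a--Ascoli/equicontinuity step by the biadjoint argument from the proof of Theorem~\ref{thm27} when establishing conditions (a) and (b). First, since $T$ is in particular continuous, Proposition~\ref{prop24} supplies $\ell \in \N_p$, the representation $Tf(y)=\sum_{k=0}^{\ell}\Phi_k(y,D^kf(h(y)))$, and the relative compactness of $h(Y)$; Theorem~\ref{thm10} gives the linearity and boundedness of each $\Phi_k(y,\cdot)$; Proposition~\ref{prop29} produces the dense open set $U$ on which $h$ is $C^q$; and Proposition~\ref{prop31} shows that $\Phi_k(y,S)$ lies in $C^q(U,F)$ as a function of $y$. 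None of these uses (weak) compactness of $T$, so they transfer unchanged, and it remains only to prove the weakly compact versions of conditions (a) and (b).

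For the setup, weak compactness furnishes a compact $L\subseteq X$ and an integer (which may be taken to be $\ell$) such that $\{Tf:\rho_{L,\ell}(f)<1\}$ is relatively weakly compact in $C^q(Y,F)$. For $f_S(x)=Sx^k$ with $k\leq\ell$ one has $\rho_{L,\ell}(f_S)\leq C\|S\|$, so after rescaling $\{Tf_S:\|S\|\leq 1\}$ is relatively weakly compact. Applying Proposition~\ref{prop25} exactly as in Theorem~\ref{thm27}—using that the weak topology on $\prod_{K,m}C(K,\cS^m(H,F))$ is the product of the weak topologies (\cite[Theorem~IV.4.3]{S}) together with Tychonoff's theorem—I would deduce that $\{D^m(Tf_S)_{|K}:\|S\|\leq 1\}$ is relatively weakly compact in $C(K,\cS^m(H,F))$ for every compact $K\subseteq Y$ and every $m\in\N_q$. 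Writing $R_{m,y}(S)=D^m(Tf_S)(y)$, which is bounded and linear in $S$ by Proposition~\ref{prop31}, and choosing $K=\{y\}$ shows that $R_{m,y}$ is a weakly compact operator, which is the seed of condition (a).

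To obtain the strong operator continuity of $y\mapsto R_{m,y}^{**}$ I would reproduce the Grothendieck-type argument from Theorem~\ref{thm27}. Fix a convergent sequence $(y_j)\to y_0$ and put $K=\{y_j\}\cup\{y_0\}$. Embed $C(K,\cS^m(H,F))$ isometrically into $C(K\times B)$ by $Ig(y,\gamma)=\gamma(g(y))$, where $B$ is the weak$^*$-compact unit ball of $(\cS^m(H,F))^*$. Then $IW$ with $W=\{D^m(Tf_S)_{|K}:\|S\|\leq\ep\}$ is relatively weakly compact in $C(K\times B)$, so by \cite[Theorem~IV.6.14]{DS} its pointwise closure lies in $C(K\times B)$. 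For $S\in\cS^k(\R^n,E)^{**}$ with $\|S\|\leq\ep$, approximating $S$ in the weak$^*$ topology by a net in the ball of $\cS^k(\R^n,E)$ exhibits $(y,\gamma)\mapsto(R_{m,y}^{**}S)(\gamma)$ as a pointwise limit of elements of $IW$, hence continuous on $K\times B$; uniform continuity in $y$ over $\gamma\in B$ then gives $\lim_j R_{m,y_j}^{**}S=R_{m,y_0}^{**}S$ in the norm of $(\cS^m(H,F))^{**}$. As $Y$ is metric, this is precisely the strong operator continuity of $y\mapsto R_{m,y}^{**}$, the seed of condition (b).

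Finally I would run the induction on $k$ that transfers these properties from $R_{m,y}$ to $D_y^m\Phi_k(y,\cdot)$. The base case $k=0$ is immediate since $\Phi_0(y,\cdot)=\pm R_{0,y}$. For the step, rewriting the identity (\ref{eq10}) to isolate $\Phi_k(y,S)$ expresses it as $\pm Tf_S(y)$ plus a finite combination of the terms $\Phi_i(y,Sh(y)^{k-i})$, $i<k$; differentiating $m$ times in $y$ by formula (\ref{eq9}) and using that $h$ is $C^q$ on $U$, one sees that $D_y^m\Phi_k(y,\cdot)$ equals $R_{m,y}$ minus finitely many operators built from the $D_y^j\Phi_i(y,\cdot)$ ($i<k$, $j\leq m$) by composition with bounded operators and multiplication by scalar $C^q$ coefficients coming from $h(y)^\nu$ and the derivatives of $h$. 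The hard part of the whole argument is verifying that conditions (a) and (b) are stable under exactly these operations. Weak compactness is immediate, since the weakly compact operators form a closed two-sided operator ideal. The strong operator continuity of the biadjoints is the delicate point: one must check that forming biadjoints commutes with the finite sums and with the precomposition and scalar factors (using $(AB)^{**}=A^{**}B^{**}$), and that the resulting products $y\mapsto(D_y^j\Phi_i(y,\cdot))^{**}\circ L_y^{**}$ remain strong-operator continuous. This follows from the local boundedness guaranteed by Proposition~\ref{prop31} and the joint continuity of multiplication in the strong operator topology on bounded sets, with each coefficient $L_y$ depending norm-continuously on $y$ through the $C^q$ map $h$. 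Granting this stability, the inductive step goes through and conditions (a) and (b) hold for every $k\leq\ell$, completing the proof.
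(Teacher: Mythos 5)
Your proposal is correct and follows exactly the route the paper itself prescribes for Theorem \ref{thm33} (whose proof it omits): reuse the structure of Theorem \ref{thm32} for all the non-compactness assertions, seed conditions (a) and (b) with the $C(K\times B)$/biadjoint argument from Theorem \ref{thm27}, and propagate them through the recursion (\ref{eq10}) using the ideal property of weakly compact operators and the local boundedness from Proposition \ref{prop31}. The details you supply for the stability of strong-operator continuity of the biadjoints under composition with norm-continuous coefficient operators are sound and are precisely the ones the paper leaves to the reader.
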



\begin{thebibliography}{99}



\bibitem {A}\textsc{J.\ Araujo}, Realcompactness and spaces of vector-valued
continuous functions, Fund.\ Math.\ \textbf{172} (2002), 27-40.

\bibitem {A2}\textsc{J.\ Araujo}, Linear biseparating maps between spaces of vector-valued differentiable functions and automatic continuity,  Adv.\ Math.\ \textbf{187} (2004), 488-520.

\bibitem{ABN}\textsc{J.\ Araujo, E.\ Beckenstein and L.\ Narici}, Biseparating maps and homeomorphic real-compactifications, J.\ Math.\ Anal.\ Appl.\ \textbf{192} (1995), 258-265.

\bibitem{AGL}\textsc{R.M.\ Aron, J.\ Gomez and J.\ Llavona}, Homomorphisms between algebras of differentiable functions in infinite dimensions, Michigan Math.\ J.\ \textbf{35} (1988), 163-178.

\bibitem{BNT}\textsc{E.\ Beckenstein, L.\ Narici and A.\ Todd},  Automatic continuity of linear maps on spaces of continuous functions, Manuscripta Math.\ \textbf{62} (1988), 257-275.

\bibitem {DGZ}\textsc{R.\ Deville, G.\ Godefroy and V\'{a}clav Zizler}, Smoothness and renormings in Banach spaces, Pitman Monographs and Surveys in Pure and Applied Mathematics 64, 1993.

\bibitem {DS}\textsc{N.\ Dunford and J.T.\ Schwartz}, Linear Operators, Part I, Wiley, New York.

\bibitem{GJ} \textsc{L.\ Gillman and M.\ Jerison}. Rings of Continuous Functions, D.\ van Nostrand, Princeton, 1960.

\bibitem{GGJ} \textsc{M.\ Gonz\'{a}lez and J.M.\ Guti\'{e}rrez}, Compact and weakly compact homomorphisms between algebras of differentiable functions, Proc.\ Amer.\ Math.\ Soc.\ \textbf{115} (1992), 1025-1029.

\bibitem {GL} \textsc{J.M.\ Guti\'{e}rrez and J.\ Llavona}, Composition operators between algebras of differentiable functions. Trans.\ Amer.\ Math.\ Soc.\ \textbf{338} (1993), 769-782.

\bibitem{HBN} \textsc{S.\ Hernandez, E.\ Beckenstein and L.\ Narici}, Banach-Stone theorems and separating maps, Manuscripta Math.\ \textbf{86} (1995), 409-416.

\bibitem{J} \textsc{K.\ Jarosz}, Automatic continuity of separating linear isomorphisms, Canad.\ Math.\ Bull.\ \textbf{33} (1990), 139-144.

\bibitem{JL} \textsc{J.-S.\ Jeang and Y.-F.\ Lin}, Characterizations of disjointness preserving operators on vector-valued function spaces, Proc.\ Amer.\ Math.\ Soc.\ \textbf{136} (2008),  947-954.

\bibitem{L} \textsc{S.\ Lang}, Differential and Riemannian Manifolds, Graduate Texts in Mathematics 160, Springer-Verlag, New York, 1995.

\bibitem{LW} \textsc{D.H.\ Leung and Y.-S.\ Wang}, Local operators on $C^p$, submitted.

\bibitem{LinW} \textsc{Y.-F.\ Lin and N.-C.\ Wong}, The structure of compact disjointness preserving operators on continuous functions, Math.\ Nachr.\ \textbf{282} (2009), 1009-1021.

\bibitem{S}\textsc{H.H. Schaefer}, Topological Vector Spaces, 2nd Edition, Graduate Texts in Mathematics 3, Springer-Verlag, New York, 1999.

\end{thebibliography}
\end{document}